\crefname{equation}{}{}
\apptocmd{\sloppy}{\hbadness 10000\relax}{}{} 
\crefname{algocf}{Algorithm}{Algorithms}
\crefname{equation}{}{} 
\crefname{conjecture}{Conjecture}{Conjectures} 
\colorlet{refkey}{orange!20}
\colorlet{labelkey}{blue!30}
\crefname{algocf}{Algorithm}{Algorithms}
\numberwithin{equation}{section}
\newtheorem{theorem}{Theorem}[section]
\newtheorem{proposition}[theorem]{Proposition}
\newtheorem{lemma}[theorem]{Lemma}
\newtheorem{claim}[theorem]{Claim}
\crefname{claim}{Claim}{Claims}
\newtheorem{conjecture}[theorem]{Conjecture}
\newtheorem*{question*}{Question}
\theoremstyle{definition}
\newtheorem{definition}[theorem]{Definition}
\newtheorem*{definition*}{Definition}
\theoremstyle{remark}
\newtheorem*{remark}{Remark}
\newcommand{\mb}{\mathbb}
\newcommand{\mbf}{\mathbf}
\newcommand{\mbm}{\mathbbm}
\newcommand{\mc}{\mathcal}
\newcommand{\mr}{\mathrm}
\newcommand{\ol}{\overline}
\newcommand{\on}{\operatorname}
\newcommand{\wh}{\widehat}
\title{Majority Dynamics: The Power of One}
\author[Sah]{Ashwin Sah}
\author[Sawhney]{Mehtaab Sawhney}
\address{Department of Mathematics, Massachusetts Institute of Technology, Cambridge, MA 02139, USA}
\email{\{asah,msawhney\}@mit.edu}
\thanks{Sah and Sawhney were supported by NSF Graduate Research Fellowship Program DGE-1745302.}
\begin{document}

\begin{abstract}
Consider $n=\ell+m$ individuals, where $\ell\le m$, with $\ell$ individuals holding an opinion $A$ and $m$ holding an opinion $B$. Suppose that the individuals communicate via an undirected network $G$, and in each time step, each individual updates her opinion according to a majority rule (that is, according to the opinion of the majority of the individuals she can communicate with in the network). This simple and well studied process is known as ``majority dynamics in social networks''. Here we consider the case where $G$ is a random network, sampled from the binomial model $\mathbb{G}(n,p)$, where $(\log n)^{-1/16}\le p\le 1-(\log n)^{-1/16}$. We show that for $n=\ell+m$ with $\Delta=m-\ell\le(\log n)^{1/4}$, the above process terminates whp after three steps when a consensus is reached. Furthermore, we calculate the (asymptotically) correct probability for opinion $B$ to ``win'' and show it is \[\Phi\bigg(\frac{p\Delta\sqrt{2}}{\sqrt{\pi p(1-p)}}\bigg) + O(n^{-c}),\]
where $\Phi$ is the Gaussian CDF. This answers two conjectures of Tran and Vu and also a question raised by Berkowitz and Devlin. 

The proof technique involves iterated degree revelation and analysis of the resulting degree-constrained random graph models via graph enumeration techniques of McKay and Wormald as well as Canfield, Greenhill, and McKay.
\end{abstract}

\maketitle

\section{Introduction}\label{sec:introduction}
Considerable effort has been devoted to understanding exchange of opinions between individuals, seeing as it plays a major role in all types of social interaction. Of course, no simple model can accurately describe the behavior of many actors in complicated situations, so analysis and understanding of natural models for this problem has generated significant interest. A natural model, which has even been of interest in biophysics \cite{MP43} and psychology \cite{CH56}, is so-called majority dynamics. It can be briefly described as follows. Given $n$ individuals, let the network $G$ capture the set of interactions between participants. For each participant $i\in\{1,\ldots,n\}$ with initial opinion $A_i^{(0)}\in\{\pm1\}$, at every time step they adopt the majority opinion of their neighbors, that is, $A_i^{(t+1)} = \on{sign}(\sum_{j\sim i}A_j^{(t)})$. The key object of study therefore is understanding the propagation of opinions and how the local structure of the network affects these dynamics. We refer the reader to \cite{MT17, MNT14, TV20, BD20} for further references regarding majority dynamics.

We now precisely define majority dynamics in terms of partitions of the graph $G$ as this will be our focus in order to analyze it. Additionally, following \cite{TV20} we follow the convention that if a participant's neighborhood is equally split between opinions then they keep the same opinion.
\begin{definition}\label{def:majority-dynamics}
Given a graph $G$ with bipartition $B_0\sqcup R_0$, the \emph{majority dynamics} at time $i$ are computed as follows. Given $B_i\sqcup R_i$, a new partition $B_{i+1}\sqcup R_{i+1}$ by swapping precisely those vertices with strictly more of their neighbors on the other side of the partition. We say a color blue or red \emph{wins by step} $k$ if $B_k = B_0\cup R_0$ or $R_k = B_0\cup R_0$, respectively.
\end{definition}

Our primary object of study in the paper concerns majority dynamics on random graphs $\mb{G}(n,p)$. First considered by Benjamini, Chan, O’Donnell, Tamuz, and Tan \cite{BCOTT16}, research has primarily focused on establishing that majority dynamics terminates in a finite number of steps (see e.g.~\cite{FKM20} and the very recent \cite{CKLT21} aimed at understanding sparse graphs) or understanding the even finer question of the distribution of which color majority dynamics terminates on \cite{TV20,BD20}

Our primary aim is to resolve a conjecture of Tran and Vu \cite{TV20} which informally states that for majority dynamics in $\mb{G}(n,1/2)$ even a bias of a single extra voter is sufficient to influence the final state by a positive probability. An essentially equivalent conjecture appears in subsequent work of Berkowitz and Devlin \cite[Conjecture~7]{BD20}.
\begin{conjecture}[{\cite[Conjecture~7]{TV20}}]\label{conj:main}
Majority dynamics on $\mb{G}(2n+\Delta,1/2)$ with sets $R_0 = \{v_1,\ldots,v_{n+\Delta}\}$ and $B_0 = \{v_1',\ldots,v_n'\}$ converges to $R_k = R_0\cup B_0$ with probability at least $1/2+f(\Delta)$, where $f(\Delta) > 0$, as $n\to\infty$.
\end{conjecture}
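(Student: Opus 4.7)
The plan is to prove the sharper quantitative claim previewed in the abstract, which immediately implies \Cref{conj:main} since $\Phi(t)>1/2$ for any $t>0$. The strategy tracks how the bias $X_t:=|R_t|-|B_t|$ amplifies over three successive rounds of majority dynamics.

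First I would analyze one round in isolation. For each vertex $v$, let $S_0(v):=d_{R_0}(v)-d_{B_0}(v)$ be the signed neighborhood imbalance. Since edges of $\mathbb{G}(2n+\Delta,1/2)$ are independent fair coins, $S_0(v)$ is a sum of i.i.d.\ $\pm 1$ random variables, approximately Gaussian with mean $(\Delta\pm 1)/2$ and variance $(2n+\Delta-1)/4$. A quick CLT computation yields $\Pr[v\in R_1]=1/2+C\Delta/\sqrt{n}+o(1/\sqrt{n})$ for both $v\in R_0$ and $v\in B_0$, so $\mathbb{E}[X_1]=\Theta(\Delta\sqrt{n})$. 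Thus a constant initial bias is amplified by a factor of $\sqrt{n}$ after a single step.

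The main obstacle is to show that $X_1$ concentrates around its mean with standard deviation $O(\sqrt{n})$, since the indicators $\mathbbm{1}[v\in R_1]$ for different $v$ are correlated through the shared random graph and a naive variance bound fails. My plan is a two-stage revelation. First, reveal only the pair $(d_{R_0}(v),d_{B_0}(v))$ for every vertex $v$; these statistics alone determine the updated partition $R_1\sqcup B_1$. Conditional on the revealed data, $G$ is uniform over graphs with those prescribed bipartite-style degree constraints, and the graph enumeration asymptotics of McKay--Wormald and Canfield--Greenhill--McKay should pin down the joint distribution of the degree pairs precisely enough to establish $X_1\approx\mathcal{N}(\Theta(\Delta\sqrt{n}),\Theta(n))$. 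This yields $\Pr[\text{red wins step 1}]=\Phi(c\Delta)+o(1)$ for an explicit constant $c>0$.

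The two remaining rounds are handled by iterating this revelation scheme. Once $|X_1|=\Theta(\sqrt{n})$ (whp, conditional on step 1 going red), the single-step amplification shows $|X_2|=\Theta(n)$, so one color holds a linear advantage; a third round then produces full consensus, since almost every vertex has a clear majority of neighbors on the winning side. All the delicate analysis is concentrated in round one: once the bias exceeds $\sqrt{n}$, routine second-moment concentration suffices, whereas the initial bias of $\Delta=O(1)$ is comparable to the Gaussian fluctuations and requires carrying the enumeration estimates through each revelation step while controlling the accumulated conditioning.
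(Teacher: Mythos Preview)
Your proposal is correct and follows essentially the same route as the paper: reveal the degree pairs $(d_{R_0}(v),d_{B_0}(v))$, use the McKay--Wormald and bipartite enumeration machinery to obtain a Gaussian limit for the day-one lead $X_1$ with mean of order $\Delta\sqrt{n}$ and variance of order $n$, then iterate the revelation over two more days to amplify the lead and reach consensus. The paper carries out exactly this program, proving in fact a joint \emph{local} limit theorem for $(|R_0\cap R_1|,|B_0\cap B_1|)$ on day one (though it notes a CLT suffices), and for days two and three uses the degree-constrained graph models that arise after conditioning on the revealed degree data.

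One small caveat: your final sentence suggests that after day one everything is ``routine second-moment concentration,'' but to conclude termination by day \emph{three} (as opposed to day four) the paper still invokes the enumeration-based degree bounds in the conditioned model at the third step; the simpler arguments in prior work only give day four. Since \Cref{conj:main} itself does not specify the stopping time, this does not affect the conjecture, but it is relevant if you are aiming for the full strength of the paper's main theorem.
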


Tran and Vu \cite{TV20} resolved this conjecture for (even) $\Delta\ge 12$, and Berkowitz and Devlin \cite{BD20} resolved it for $\Delta\ge 3$.

We resolve this conjecture in full.
\begin{theorem}\label{thm:main}
There is an absolute constant $c > 0$ so that the following holds. Let $n\ge 1$. Let $0\le\Delta\le(\log n)^{1/4}$ and let $n\ge 1$, $(\log n)^{-1/16}\le p\le 1-(\log n)^{-1/16}$. In majority dynamics on $\mb{G}(2n+\Delta,p)$ with $|R_0| = n+\Delta$, with probability at least $1-O(n^{-c})$ there is a color with more vertices at step $1$ and that color wins by step $3$. Furthermore, $|R_3| = 2n+\Delta$ with probability
\[\Phi\bigg(\frac{p\Delta\sqrt{2}}{\sqrt{\pi p(1-p)}}\bigg) + O(n^{-c})\]
where $\Phi$ is the cdf of $\mc{N}(0,1)$.
\end{theorem}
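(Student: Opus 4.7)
I would prove this by tracking the evolution of the bias $b_t:=|R_t|-|B_t|$ across the three dynamical steps, in three phases: at step~$1$, $b_1/\sqrt{2n}$ should converge to a Gaussian whose mean yields the $\Phi$ formula; at step~$2$, a $b_1=\Theta(\sqrt{n})$ should amplify to $b_2=\Theta(n)$; and at step~$3$, a $b_2=\Theta(n)$ should force consensus. Thus the sign of $b_1$ is essentially the only source of randomness in the eventual winner, so the whole theorem hinges on a quantitative distributional statement about $b_1$.

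The key is a quantitative CLT for $b_1$. For $v\in R_0$ write $d_v^R,d_v^B$ for its numbers of neighbors in $R_0\setminus\{v\}$ and $B_0$; then $v\in R_1$ iff $d_v^R\ge d_v^B$. A Gaussian local-limit evaluation, with continuity correction, gives
\[
\Pr[v\in R_1]=\tfrac{1}{2}+\tfrac{(\Delta-1)p+1/2}{\sqrt{4\pi np(1-p)}}+O(n^{-1}),
\]
and a symmetric calculation for $v\in B_0$ replaces $(\Delta-1)p+1/2$ by $(\Delta+1)p-1/2$ in the numerator. Summing over $v$, $\mb E[b_1]=2p\Delta\sqrt{n/(\pi p(1-p))}+O(1)$, while a careful covariance calculation (any pair of indicators $\mbm{1}[v\in R_1]$ shares at most one random edge) gives $\on{Var}(b_1)=2n(1+o(1))$. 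A Stein/Berry-Esseen argument on this weakly dependent sum then yields $b_1/\sqrt{2n}\to\mc N\!\paren{p\Delta\sqrt{2/(\pi p(1-p))},1}$ with polynomial error $n^{-c}$, and integrating against $\mbm{1}[b_1>0]$ produces the claimed $\Phi$ probability.

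For step~$2$, the argument becomes deterministic amplification, but the graph is no longer distributed as $\mb{G}(2n+\Delta,p)$: we have conditioned on the degree-pair statistics that produce $R_1,B_1$. Iterated degree revelation handles this: reveal at step~$1$ only the $R_0$-$B_0$ bipartite edges together with each pair $(d_v^R,d_v^B)$, after which the subgraphs on $R_0$ and $B_0$ are uniform over graphs with those degree sequences. The enumeration formulae of McKay-Wormald and Canfield-Greenhill-McKay give conditional edge-presence probabilities up to a $1+O(n^{-c})$ multiplicative factor. Conditionally, for any vertex $v$ the red-excess in its neighborhood has mean $pb_1=\Theta(\sqrt{n})$ and fluctuation $\Theta(\sqrt{n})$, so $v$ lies on the winning side with probability $q=\tfrac12+\Omega(1)$. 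A further concentration estimate for $|R_2|$ gives $b_2=(2q-1)(2n+\Delta)=\Theta(n)$ with the sign of $b_1$. Step~$3$ is then routine: with $|b_2|=\Theta(n)$ the expected red-excess at each vertex is $\Theta(n)$, dwarfing the $O(\sqrt{n})$ fluctuation, and a Chernoff bound union-ed over vertices gives $b_3=2n+\Delta$ with probability $1-\exp(-n^{\Omega(1)})$.

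The main obstacle, in my view, is keeping the constants exactly right through the successive conditionings. Each revelation introduces edge correlations of size $O(1/n)$, and aggregated over $\Theta(n)$ edges per vertex these could easily shift either the leading mean $2p\Delta\sqrt{n/(\pi p(1-p))}$ of $b_1$ or the amplification threshold in step~$2$. This is precisely why the sharp graph-enumeration technology of McKay-Wormald and Canfield-Greenhill-McKay is essential: a naive union-bound or second-moment approach would lose the precise constant $\sqrt{2/(\pi p(1-p))}$ in the $\Phi$ argument, whereas the enumeration formulae furnish asymptotics for the count of graphs with a prescribed degree sequence that are fine enough to track a single-voter bias through three rounds without disturbing the CLT constants.
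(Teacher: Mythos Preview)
Your three-phase outline matches the paper's strategy, and your day-one computation of the mean and variance of $b_1$ is correct (indeed $\on{Var}(b_1)=2n(1+o(1))$ because the pairwise covariances within $R_0$, within $B_0$, and across cancel). The paper proves a full joint \emph{local} limit theorem for $(|R_0\cap R_1|,|B_0\cap B_1|)$ via the McKay--Wormald/McKay--Skerman degree-sequence models rather than a Stein-type CLT, but it explicitly notes that a non-joint CLT for $b_1$ (as in Berkowitz--Devlin) together with the precise mean computation already suffices, so your day-one plan is a legitimate alternative. Day two in your sketch is essentially the paper's argument.

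The gap is at day three. By the time you analyse step~$3$, you have revealed $\deg_{V_x}v$ for all $x\in\{0,1\}^2$; conditional on this, the graph is a uniform degree-constrained graph on each of the four parts (and six bipartite pieces), not a product of independent Bernoullis, so a literal Chernoff bound on $\deg_{R_2}v-\deg_{B_2}v$ is unavailable. Nor can you apply Chernoff in the original $\mb G(2n+\Delta,p)$ to $\deg_{R_2}v$, since $R_2$ is a random set determined by the same edges. The paper handles this by a \emph{third} application of enumeration: it uses the McKay--Wormald/Canfield--Greenhill--McKay counts to prove sub-Gaussian tail bounds of the form $\mb P[|\deg_{V}v-p|V||\ge t]\le\exp(-\Omega(t^2/n))$ inside the degree-constrained model, and then union-bounds over the eight refined parts $V_x$, $x\in\{0,1\}^3$. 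The paper is explicit that the simpler arguments of \cite{BCOTT16,TV20,BD20} only yield termination by day \emph{four}; getting day three genuinely requires this extra layer of enumeration, which your ``routine Chernoff'' glosses over. Once you replace ``Chernoff'' with ``enumeration-based concentration in the degree-constrained model'' (which you already invoke at step~$2$), your outline becomes the paper's proof.
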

\begin{remark}
In particular, the event that both colors have the same size at step $1$ occurs with decaying probability. The parameters $(\log n)^{-1/16}, (\log n)^{-1/4}$ can certainly be improved substantially but we have chosen to focus on the dense regime.
\end{remark}

We note that \cref{thm:main} additionally resolves \cite[Conjecture~8]{TV20} regarding monotonicity of the limiting probabilities with respect to $\Delta$, and the proof of \cref{thm:main} essentially answers \cite[Question~2]{BD20} (see in particular \cref{thm:day-one,thm:day-two} which provide fine information about the sizes of the various parts after one and two days). We also note that this is the first work which gives an exact limiting probability for a specific color winning when that probability is strictly between $0$ and $1$ (other than the simple symmetric case $\Delta = 0$).

We anticipate that the techniques of this paper combined with recent refined asymptotic enumeration results of \cite{LW17,LW20} can yield further refinements of work of \cite{BD20,FKM20}. In particular this may allow precise understanding of the number of steps before reaching stability for wider ranges of sparse $p$ than currently known.

\subsection{Strategy}\label{sub:strategy}
The broad structure of this paper breaks into $2$ phases. In the first we substantially refine results of \cite{BD20} in order to obtain a local limit theorem of how many vertices switched from blue to red and red to blue jointly. Our techniques rely extensively on graph enumeration results and models developed for degree sequences in $\mb{G}(n,p)$ by McKay and Wormald \cite{MW97} and random bipartite graphs by McKay and Skerman \cite{MS16}. Technically, this fine-tuned local limit theorem is not necessary to complete the analysis, and one can use a (non-joint) central limit theorem for the lead \cite[Theorem~1]{BD20} along with a precise computation of its mean (using for example techniques similar to \cref{app:calculations} or \cite[Lemma~12]{BD20}).

The second and third days also use the graph enumeration techniques of McKay and Wormald \cite{MW90} which were extended to bipartite graphs by Canfield, Greenhill, and McKay \cite{CGM08}; however at these stages we will only derive coarser information about the degree sequences and the number of red and blue vertices. In particular, we prove that given a sufficiently large initial lead, on the second day the number of red and blue vertices concentrate in intervals of length $O(n^{1-\eta})$ for an absolute constant $\eta$. Further, we show that the side leading will have developed a substantial lead (of linear order). Then a final application of degree enumeration implies that with high probability that the process terminates on the third day, because it is unlikely for any vertex to have a degree so large that it overcomes the gap between sizes. (Simpler arguments in \cite{BCOTT16,TV20,BD20} show termination by the fourth day without enumeration at this stage.)

For these two stages we rely on a modification of a concentration argument developed by the Ferber, Kwan, Narayanan, and the authors \cite{FKNSS21} where a general framework for applying the second moment method with McKay-Wormald \cite{MW90} enumeration formulas were used to resolve a conjecture of F\"uredi on the existence of ``unfriendly'' partitions in $G(n,1/2)$. The analysis here is substantially simpler as we need to track fewer parameters to guarantee convergence to termination within a finite time horizon. In particular, the analysis of the third day only requires a large-deviation bound on the degrees of vertices from what is expected in a degree-constrained random graph model, and the analysis of the second day has substantially simpler formulas due to the setting.

\subsection*{Acknowledgements}
We thank Asaf Ferber, Vishesh Jain, Matthew Kwan, and Bhargav Narayanan for discussions related to this project.

\section{Day one}\label{sec:one}
As mentioned, the analysis for the first day involves proving a local limit theorem for the sizes of parts. Although a central limit theorem was shown by Berkowitz and Devlin \cite{BD20} for the size of the red partition after one step, we will require understanding of how many vertices switched from blue to red and red to blue jointly, rather than the net amount. A central limit theorem may be derivable from their method, which involves moments. We need only a joint central limit theorem but we have chosen to demonstrate a local limit theorem to demonstrate the power of these techniques, and due to its independent interest. In particular, enumeration techniques allow one to reduce this computation to a local limit theorem for certain binomial random variables and various question about the model can be derived using these techniques.

The main result of this section is the following theorem. Its proof occupies \cref{sub:initial-estimates,sub:graph-models,sub:local-limit,sub:integration}.
\begin{theorem}\label{thm:day-one}
There exists constants $C,c>0$ such that the following holds. Let $n\ge 2$, let $0\le\Delta\le(\log n)^{1/4}$ and let $(\log n)^{-1/4}\le p\le 1-(\log n)^{-1/4}$. Let
\begin{align*}
\sqrt{\frac{n}{4\pi}}x' &= x - \bigg(\frac{1}{2}+\frac{p(\Delta-1)+1/2}{2\sqrt{\pi p(1-p)n}}\bigg)n,\\
\sqrt{\frac{n}{4\pi}}y' &= y - \bigg(\frac{1}{2}+\frac{p(-\Delta-1)+1/2}{2\sqrt{\pi p(1-p)n}}\bigg)n.
\end{align*}
In majority dynamics on $\mb{G}(2n+\Delta,p)$ with $|R_0| = n+\Delta$, we have that 
\begin{align*}
\mb{P}[|R_0\cap R_1| = x&\wedge|B_0 \cap B_1|=y]\\
&= \frac{2}{n\sqrt{\pi(2+\pi)}}\exp\bigg(-\frac{(1+\pi)(x')^2-2(x'y')+(1+\pi)(y')^2}{2\pi(2+\pi)}\bigg) + O(n^{-1-c}).
\end{align*}
Furthermore for $|x'|$ or $|y'|\ge C\sqrt{\log n}$ we have that
\[\mb{P}[|R_0\cap R_1| = x\wedge|B_0 \cap B_1|=y]\le n^{-5}.\]
\end{theorem}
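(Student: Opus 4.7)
The plan is to exploit the observation that whether a red vertex $v\in R_0$ remains red depends only on comparing its internal degree $d_R(v)$ to $R_0$ with its external degree $b(v)$ to $B_0$, and symmetrically for blue vertices. Conditioning on the bipartite graph between $R_0$ and $B_0$, the induced graphs on $R_0$ and $B_0$ are independent copies of $\mb{G}(n+\Delta,p)$ and $\mb{G}(n,p)$. This cleanly separates the two counts given the bipartite data, and the proof follows a four-step pattern matching the section's subdivisions.

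First, I would compute $\mb{P}[v\in R_1]$ for $v\in R_0$ via two-dimensional normal approximation for $(d_R(v),d_B(v))\sim(\on{Bin}(n+\Delta-1,p),\on{Bin}(n,p))$, expanded to order $1/\sqrt{n}$ to justify the centerings of $x'$ and $y'$; pairwise computations yield variance $\Theta(n)$, consistent with the $\sqrt{n/(4\pi)}$ scaling in the statement. Next, I would replace the degree sequences by tractable surrogate models: by McKay--Skerman the bipartite degree sequence $((b(v))_{v\in R_0},(r(u))_{u\in B_0})$ is approximated up to $O(n^{-c})$ total variation by independent $\on{Bin}(n,p)$ and $\on{Bin}(n+\Delta,p)$ variables conditioned on matching sums, while by McKay--Wormald each internal degree sequence is modeled by independent binomials with a parity constraint, independently of the bipartite data.

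Now fix the bipartite graph and its degree data. Then $\one[v\in R_1]=\one[d_R(v)\ge b(v)]$ is a function of the internal $R_0$-degree sequence, which by McKay--Wormald is approximately iid $\on{Bin}(n+\Delta-1,p)$; hence the indicators are effectively independent Bernoulli with parameters $\mb{P}[\on{Bin}(n+\Delta-1,p)\ge b(v)]$, and a local CLT with Berry--Esseen-type error gives the conditional density of $|R_0\cap R_1|$. The analogous statement holds for $|B_0\cap B_1|$, and conditional on the bipartite graph the two counts are independent. Integrating the product of these conditional Gaussians against the (asymptotically Gaussian) distribution of the bipartite degree sequence then yields the explicit joint density in the statement; the cross-term $-2x'y'$ and the diagonal coefficients $1+\pi$ arise because the shared total bipartite volume drives the conditional means of $|R_0\cap R_1|$ and $|B_0\cap B_1|$ in opposite directions, producing a negative correlation on top of the two base variances.

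The tail bound follows from subgaussian concentration: sums of $\Theta(n)$ bounded, nearly independent indicators deviate by $\sqrt{n\log n}$ with probability $\exp(-\Omega(\log n))\ll n^{-5}$ for $C$ large. The main obstacle is the final integration step: one must expand the conditional Bernoulli parameter $\mb{P}[\on{Bin}(n+\Delta-1,p)\ge b(v)]$ to first order in $b(v)$, convolve with the bipartite-degree distribution while respecting the matching-sum constraint, and recover the exact constants $1+\pi$ and $-2$. This requires invoking each of McKay--Wormald and McKay--Skerman with error control uniform in $\Delta$ and $p$, as well as careful bookkeeping of the several sources of variance and covariance (internal degrees, bipartite volume, and the sensitivity of the Bernoulli thresholds) so that no constant is dropped in the convergence to a joint Gaussian.
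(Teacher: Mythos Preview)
Your decomposition (condition on the bipartite graph, then handle the two internal graphs separately) is a legitimate alternative to the paper's route, but as written the proposal contains two substantive errors that would lead to wrong constants, and it misses the technical engine that actually produces the joint density.

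\textbf{The degree models are misstated.} The McKay--Wormald and McKay--Skerman theorems do \emph{not} say that the degree sequence of $\mb{G}(n,p)$ (resp.\ bipartite $\mb{G}(m,n,p)$) is close in total variation to independent $\on{Bin}(n-1,p)$ with a parity constraint (resp.\ matching-sums constraint). They say it is close to the \emph{integrated} model $\mc{I}_p$: first sample a random density $p'\sim\mc{N}(p,p(1-p)/n^2)$, then take independent $\on{Bin}(n-1,p')$ with the constraint. The fluctuation in $p'$ is of the same order as everything else you are tracking and cannot be dropped. If you set all three random densities to $p$ you obtain a quadratic form in $(x',y')$ with different coefficients than the theorem claims; the integration over $(q_0,q_1,q_2)$ is precisely what the paper does at the end to get $\frac{(1+\pi)(x')^2-2x'y'+(1+\pi)(y')^2}{2\pi(2+\pi)}$, and it genuinely changes the answer.

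\textbf{The correlation sign is backwards.} More bipartite edges means larger $b(v)$ for $v\in R_0$ \emph{and} larger $r(u)$ for $u\in B_0$; both make it harder to stay put, so both $|R_0\cap R_1|$ and $|B_0\cap B_1|$ move in the \emph{same} direction. The covariance of $(x',y')$ is $+1$ (the inverse covariance has off-diagonal $-1/(\pi(2+\pi))$, hence the $-2x'y'$ in the exponent), so the correlation is positive, not negative.

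\textbf{What the paper actually does.} Rather than conditioning on the bipartite graph, the paper transfers all three pieces to their integrated models, fixes the triple $(q_0,q_1,q_2)$, and then uses Bayes to write the desired probability as a ratio with the parity and equal-sum constraints in the numerator. The parity constraint is dispatched trivially. The equal-sum constraint $|\mbf{S}|=|\mbf{T}|$ is handled by conditioning the other way: given $R_0\cap R_1=A$ and $B_0\cap B_1=B$, the quantity $|\mbf{S}|-|\mbf{T}|$ is a sum of independent ``chopped'' binomials (each $\on{Bin}$ conditioned on being above or below another $\on{Bin}$), and the paper proves a dedicated local CLT for such sums using log-concavity and a Bender-type argument. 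This produces the third Gaussian factor $\phi_{0,\sigma^2}(\mu^\ast)$, whose mean $\mu^\ast$ is linear in $x-y$ and in $q_0-q_1$; the cross-term in the final density comes from \emph{this} constraint, not only from the shared bipartite volume. The last step is a clean three-dimensional Gaussian integral over $(q_0,q_1,q_2)$. Your proposed integration over the full bipartite degree sequence is high-dimensional and you have not identified which low-dimensional statistics suffice; in the paper's framework these turn out to be exactly the three densities.
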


\subsection{Initial estimates}\label{sub:initial-estimates}
We will first need some initial estimates regarding specific distributions which will show up when computing our local limit theorem. First, we record the probability that one binomial is greater than a different binomial with similar parameters. We defer its proof, which consists mainly of binomial manipulations and applications of well-known local central limit theorems, to \cref{app:calculations}.
\begin{lemma}\label{lem:chop-probability}
There is $c > 0$ so that the following holds. We are given $n\ge 2$, $\tau\in\mb{Z}$ of magnitude at most $2(\log n)^{1/4}$, and $(\log n)^{-1/4}\le p\le 1-(\log n)^{-1/4}$. Suppose that $q = p + \alpha/n$ and $q' = p + \beta/n$ with $|\alpha|,|\beta|\le 40\sqrt{p(1-p)\log n}$. Then
\[\mb{P}[\on{Bin}(n+\tau,q)\ge\on{Bin}(n,q')] = \frac{1}{2}+\frac{p\tau + 1/2 + \alpha - \beta}{2\sqrt{\pi p(1-p)n}} + O(n^{-3/4}).\]
\end{lemma}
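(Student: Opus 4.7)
The plan is to reduce $\mb{P}[\on{Bin}(n+\tau,q) \ge \on{Bin}(n,q')]$ to a Gaussian computation via a local central limit theorem for the integer-valued difference $Z := \on{Bin}(n+\tau,q) - \on{Bin}(n,q')$. I would begin by expanding
\begin{align*}
\mu_Z &= (n+\tau)q - nq' = p\tau + \alpha - \beta + O\!\big((\log n)^{3/4}/n\big), \\
\sigma_Z^2 &= (n+\tau)q(1-q) + nq'(1-q') = 2np(1-p) + O(\sqrt{\log n}),
\end{align*}
so that under the hypotheses one has $\mu_Z = O(\sqrt{\log n})$ and $\sigma_Z = \sqrt{2np(1-p)}(1+o(1))$, with $np(1-p) \ge n(\log n)^{-1/4}$.

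Next I would apply a quantitative local CLT (e.g.\ Gnedenko--Kolmogorov, optionally sharpened by an Edgeworth correction) to $Z$, viewed as a sum of $2n+\tau$ independent $\pm$Bernoulli summands whose individual variances are uniformly bounded below thanks to $p \ge (\log n)^{-1/4}$. This yields, in the bulk $|k-\mu_Z| \le C\sigma_Z\sqrt{\log n}$,
\[ \mb{P}[Z=k] = \frac{1}{\sqrt{2\pi\sigma_Z^2}}\exp\!\Big(-\tfrac{(k-\mu_Z)^2}{2\sigma_Z^2}\Big) + O(n^{-1-c}), \]
with the tails contributing $O(n^{-10})$ by Hoeffding. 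Summing this density over $k \ge 0$ and comparing the Riemann sum with its Gaussian integral (the Euler--Maclaurin error being controlled by $O(|g'(0)|) = O(\sqrt{\log n}/n^{3/2})$) produces the natural continuity correction
\[ \mb{P}[Z \ge 0] = \Phi\!\bigg(\frac{\mu_Z + 1/2}{\sigma_Z}\bigg) + O(n^{-3/4}). \]
A final Taylor expansion $\Phi(u) = \tfrac12 + u/\sqrt{2\pi} + O(u^3)$, applied with $u = (\mu_Z+1/2)/\sigma_Z = O(\sqrt{\log n/n})$ and combined with the expansion $\sigma_Z^{-1} = (2np(1-p))^{-1/2}(1 + O((\log n)^{1/2}/(np(1-p))))$, produces the claimed identity.

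The principal obstacle I foresee is producing a local CLT with sufficiently sharp remainder uniformly over $p \in [(\log n)^{-1/4}, 1-(\log n)^{-1/4}]$: as $p$ approaches the endpoints the per-summand variance $p(1-p)$ degenerates and generic implicit constants in LCLT estimates become useless. This forces one to invoke a version of the local CLT with error estimates depending explicitly on $p(1-p)$ (such refined versions are classical, e.g.\ following Esseen's sharp form). All the remaining steps---computing moments, truncating tails, Euler--Maclaurin, and Taylor-expanding $\Phi$ and $\sigma_Z^{-1}$---are mechanical bookkeeping once the quantitative LCLT is in hand.
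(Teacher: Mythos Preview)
Your approach is correct and takes a genuinely different route from the paper. The paper does not work with the single variable $Z$ and an Edgeworth/continuity correction. Instead it (i) Taylor-expands the factors $(1+\alpha/(pn))^j(1-\alpha/((1-p)n))^{n+\tau-j}$ to decouple the $(\alpha,\beta)$-dependence from the base case $q=q'=p$, evaluating the first-order correction $(\alpha-\beta)/(2\sqrt{\pi p(1-p)n})$ via a Berry--Esseen Gaussian computation; and (ii) handles the base case by a symmetry trick: writing $\on{Bin}(n+\tau,p)=\on{Bin}(m,p)+\on{Bin}(|\tau|,p)$ with $m=\min(n,n+\tau)$, conditioning on the small summand, and using that for iid $X,X'\sim\on{Bin}(m,p)$ one has $2\,\mb{P}[X'\ge X+k]=1-\mb{P}[|X'-X|<k]$. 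This identity means only $O(|\tau|)=O((\log n)^{1/4})$ point values of the local CLT for $X'-X$ are ever summed, so no error accumulation occurs and the vanilla binomial local CLT suffices.

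Your direct route is cleaner, but the Edgeworth refinement is \emph{not} optional: the basic local CLT for $Z$ has pointwise error $\Theta(1/\sigma_Z^2)$, and summing this over the $\Theta(\sigma_Z\sqrt{\log n})$ bulk values of $k$ yields only $O(\sqrt{\log n}/\sigma_Z)=O(n^{-1/2+o(1)})$, too weak for the claimed $O(n^{-3/4})$. What rescues your method is the structure of $Z$: being a \emph{difference} of two nearly identical binomials, its third cumulant is
\[
\kappa_3(Z)=(n+\tau)q(1-q)(1-2q)-nq'(1-q')(1-2q')=O(\sqrt{\log n})
\]
rather than the generic $\Theta(np(1-p))$, so the first Edgeworth correction to $\mb{P}[Z\ge 0]$ is $O(\kappa_3/\sigma_Z^3)=O(n^{-3/2+o(1)})$, negligible. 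With one Edgeworth term (equivalently, Esseen's lattice-corrected CDF expansion) the residual is $O(1/\sigma_Z^2)=O(n^{-1+o(1)})$ and your argument goes through. The paper's decomposition buys the ability to use only off-the-shelf Berry--Esseen and the binomial local CLT at finitely many points; your route trades that for a single invocation of a slightly heavier tool, together with the (unstated but essential) observation that the skewness of $Z$ nearly cancels.
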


Next, we need to understand the mean and standard deviation of certain conditioned binomial random variables. The level of control required can be deduced from the Berry-Esseen theorem.
\begin{lemma}\label{lem:chop-statistics}
There is $c > 0$ so that the following holds. We are given $n\ge 2$, $\tau\in\mb{Z}$ of magnitude at most $2(\log n)^{1/4}$, and $(\log n)^{-1/4}\le p\le 1-(\log n)^{-1/4}$. Suppose that $q,q'\in p\pm 40\sqrt{p(1-p)\log n}/n$. Let $X\sim\on{Bin}(n+\tau,q)$ and $Y\sim\on{Bin}(n,q')$. Let $X^+$ be $X$ conditional on $X > Y$ while $X^-$ be $X$ conditional on $X\le Y$. Then
\begin{align*}
\mb{E}X^+ = pn + \sqrt{\frac{p(1-p)n}{\pi}} + O(n^{1/4}),&\qquad\on{Var}X^+ = \bigg(1-\frac{1}{\pi}\bigg)p(1-p)n+O(n^{3/4})\\
\mb{E}X^- = pn - \sqrt{\frac{p(1-p)n}{\pi}} + O(n^{1/4}),&\qquad\on{Var}X^- = \bigg(1-\frac{1}{\pi}\bigg)p(1-p)n+O(n^{3/4}).
\end{align*}
\end{lemma}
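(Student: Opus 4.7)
The plan is to compute the conditional moments in the Gaussian limit and transfer to the binomial setting. The key change of variables is $U = (X+Y)/2$ and $V = (X-Y)/2$. Under the Gaussian approximation to $(X, Y)$, these are approximately independent Gaussians with $\mb{E}U \approx pn$, $\mb{E}V \approx 0$, and $\on{Var}(U) \approx \on{Var}(V) \approx p(1-p)n/2$ (the covariance $\on{Cov}(U, V) = (\on{Var}(X) - \on{Var}(Y))/4 = O(\sqrt{\log n})$ is negligible compared to the $\Theta(n)$ variances). The event $X > Y$ becomes $V > 0$, and since $X = U + V$ with $U, V$ independent, the truncated-Gaussian identities $\mb{E}[V \mid V > 0] = \sqrt{2\on{Var}(V)/\pi}$ and $\on{Var}(V \mid V > 0) = \on{Var}(V)(1 - 2/\pi)$ give the target formulas
\begin{align*}
\mb{E}[X \mid X > Y] &= \mb{E}U + \mb{E}[V \mid V > 0] = pn + \sqrt{p(1-p)n/\pi} + O(\sqrt{\log n}),\\
\on{Var}(X \mid X > Y) &= \on{Var}(U) + \on{Var}(V \mid V > 0) + O(\sqrt{\log n}) = (1 - 1/\pi) p(1-p) n + O(\sqrt{\log n}),
\end{align*}
to leading order.

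To transfer these estimates to the binomial setting, apply a local central limit theorem (or Berry--Esseen, as hinted) to the joint distribution of $(X, Y)$. For $(x, y)$ in the bulk $\{|x - pn|, |y - pn| \le C\sqrt{n\log n}\}$, which carries all but $n^{-10}$ of the mass, the point probabilities $\mb{P}[X = x]\mb{P}[Y = y]$ are well-approximated by the bivariate Gaussian density with small controllable error. Summing over $\{x > y\}$ expresses $\mb{E}[X^j \one(X > Y)]$ for $j = 0, 1, 2$ as a Gaussian integral plus lower-order corrections, using anti-concentration of $X - Y$ (point masses bounded uniformly by $O(n^{-1/2})$) to handle the discontinuous indicator on the boundary. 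Dividing by $\mb{P}(X > Y)$ (known to precision $O(n^{-3/4})$ from \cref{lem:chop-probability}) yields the conditional moments.

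A subtlety arises in the variance: $\mb{E}[X^2 \mid X > Y]$ and $(\mb{E}[X \mid X > Y])^2$ individually have leading order $p^2 n^2$, so naive combination would amplify the individual-moment errors to order $O(n^{5/4})$, exceeding the allowed $O(n^{3/4})$. The cleanest remedy is to compute $\mb{E}[(X - pn)^2 \mid X > Y]$ directly, so that the deterministic $pn$ absorbs the leading $p^2 n^2$ inside the expectation; equivalently, the $(U, V)$ decomposition writes the conditional variance as a sum of three manifestly $O(n)$ terms and sidesteps the cancellation entirely. The $X^-$ case then follows from $\mb{E}[X^k] = \mb{E}[(X^+)^k]\mb{P}(X > Y) + \mb{E}[(X^-)^k]\mb{P}(X \le Y)$ for $k = 1, 2$, combined with the just-established values for $X^+$ and the estimate for $\mb{P}(X > Y)$. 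The principal technical challenge is carefully tracking all error terms through the local CLT and the subsequent moment computation so that the variance cancellation is handled cleanly and the stated $O(n^{3/4})$ tolerance is met.
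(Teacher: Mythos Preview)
Your proposal is correct and takes essentially the same approach as the paper: approximate $(X,Y)$ by a bivariate Gaussian via Berry--Esseen, compute the conditional moments in the Gaussian limit, and handle the variance by centering at $pn$ before taking expectations to avoid the $p^2n^2$ cancellation. Your $(U,V)$ decomposition is just a convenient reparametrization of the paper's direct evaluation of $\mb{E}[Z_1\mbm{1}_{Z_1\ge Z_2}]$ and $\mb{E}[Z_1^2\mbm{1}_{Z_1\ge Z_2}]$ for independent standard normals $Z_1,Z_2$; both routes arrive at the same half-normal identities, and the paper likewise centers at $pn$ for the second moment exactly as you suggest.
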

\begin{proof}
By Berry-Esseen, the joint distribution $(X-pn,Y-pn)/\sqrt{p(1-p)n}$ has cumulative distribution function differing from $\mc{N}(0,I_2)$ by $O(1/\sqrt{np(1-p)})$ pointwise. (Note that $\tau$ is small, so the shifts are negligible.) Let $Z_1,Z_2\sim\mc{N}(0,1)$. We see that
\begin{align*}
\mb{E}X^+ = \mb{E}[X|X\ge Y] &= pn + \sqrt{p(1-p)n}\frac{\mb{E}[X\mbm{1}_{X\ge Y}]}{\mb{P}[X\ge Y]}\\
&= pn + \sqrt{p(1-p)n}\frac{\mb{E}[Z_1\mbm{1}_{Z_1\ge Z_2}] + O(n^{-1/4})}{\frac{1}{2} + O(n^{-1/4})}\\
&= pn + \sqrt{\frac{p(1-p)n}{\pi}} + O(n^{1/4}).
\end{align*}
The error terms $O(n^{-1/4})$ come from integrating the discrepancy in cumulative distribution functions over the region where $(X-pn,Y-pn)/\sqrt{p(1-p)n}$ is bounded by $O(\sqrt{\log n})$ and using a large deviation bound for binomials outside. Similarly,
\begin{align*}
\mb{E}(X^+-pn)^2 = \mb{E}[(X-pn)^2|X\ge Y] &= p(1-p)n\frac{\mb{E}[Z_1^2\mbm{1}_{Z_1\ge Z_2}] + O(n^{-1/4})}{\frac{1}{2} + O(n^{-1/4})}\\
&= p(1-p)n + O(n^{3/4}).
\end{align*}
Therefore
\[\on{Var}X^+ = \mb{E}[(X^+-pn)^2] - (\mb{E}[X^+-pn])^2 = \bigg(1-\frac{1}{\pi}\bigg)p(1-p)n + O(n^{3/4}).\]
Above, we used $\mb{E}[Z_1\mbm{1}_{Z_1\ge Z_2}] = 1/(2\sqrt{\pi})$ and $\mb{E}[Z_1^2\mbm{1}_{Z_1\ge Z_2}] = 1/2$. The computation for $X^-$ is exactly analogous so we omit it.
\end{proof}

Next, we need a local limit theorem for sums of these conditioned binomial random variables. The proof uses log-concavity of binomial distributions, along with a technique of Bender \cite{Ben73} which upgrades a Berry-Essen quality central limit theorem for a log-concave variable into a local central limit theorem. Though it follows by directly citing such results, we spell out the details in order to quantify the bounds.
\begin{proposition}\label{prop:chopped-lclt}
There is $c > 0$ so that the following holds. We are given $n\ge 2$, $m,\tau_k,\tau_k'\in\mb{Z}$ of magnitude at most $2(\log n)^{1/4}$, and $(\log n)^{-1/4}\le p\le 1-(\log n)^{-1/4}$. Suppose that $q_k,q_k'\in p\pm 40\sqrt{p(1-p)\log n}/n$. Let $X_k\sim\on{Bin}(n+\tau_k,q_k)$ and $Y_k\sim\on{Bin}(n+\tau_k',q_k')$. Let $X_k^+$ be $X_k$ conditional on $X_k > Y_k$ while $X_k^-$ be $X_k$ conditional on $X_k\le Y_k$. Fix some $i\in[n+m]$ and sequence $\epsilon_k\in\{\pm1\}$, and let
\[S = \sum_{k=1}^i\epsilon_kX_k^- + \sum_{k=i+1}^{n+\tau'}\epsilon_kX_k^+.\]
Then
\[\mb{P}[S = s] = \frac{1}{\sqrt{2\pi}\sigma_S}\exp\bigg(-\frac{(s-\mu_S)^2}{2\sigma_S^2}\bigg) + O\bigg(\frac{1}{n^{1/5}\sigma_S}\bigg)\]
for all $i\in[n+m]$ and $s\in\mb{Z}$, if $\mu_S$ and $\sigma_S$ are the mean and variance of $S$.
\end{proposition}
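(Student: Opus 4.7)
The plan is to combine a Berry--Esseen-quality integrated CLT for $S$ with discrete log-concavity of its PMF, then use a technique of Bender to upgrade the integrated CLT to a local one. The argument has three ingredients: $S$ has a log-concave PMF; an integrated Berry--Esseen bound for $S$; and an averaging argument that converts these inputs into the pointwise estimate.

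For log-concavity, each $X_k$ and $Y_k$ is binomial, hence discretely log-concave, and the CDF (as well as the survival function) of any discretely log-concave sequence is itself log-concave. The PMF of $X_k^+$ is proportional to $f_{X_k}(x)\cdot\mb{P}[Y_k<x]$, a product of log-concave functions, hence log-concave; the case of $X_k^-$ is analogous using the survival function of $Y_k$ in place of its CDF. Since $x\mapsto -x$ preserves log-concavity and convolutions of independent discretely log-concave distributions remain log-concave (Hoggar's theorem), the sum $S$ itself has a log-concave PMF. For the integrated CLT, \cref{lem:chop-statistics} gives $\on{Var}(X_k^\pm)=\Theta(p(1-p)n)$, and since $X_k^\pm$ lies in $[0,n+O(1)]$, its centered third absolute moment is $O((p(1-p)n)^{3/2})$. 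Summing $\Theta(n)$ independent such variables gives $\sigma_S=\Theta(\sqrt{p(1-p)}\,n)$, so Berry--Esseen yields
\[\sup_s\left|\mb{P}[S\le s]-\Phi\bigg(\frac{s-\mu_S}{\sigma_S}\bigg)\right|=O\bigg(\frac{1}{\sqrt{p(1-p)n}}\bigg).\]

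For the local CLT upgrade, I would use Bender's averaging argument. Log-concavity of $f(s)=\mb{P}[S=s]$ forces the ratio $f(s+1)/f(s)$ to be monotonic, and combined with the bound $f(s)=O(1/\sigma_S)$ (itself a consequence of log-concavity), this shows $f$ varies by a factor $1+O(h/\sigma_S)$ across any window of length $h\ll\sigma_S$. Summing over such a window gives
\[f(s)=\frac{F_S(s+h)-F_S(s-h)}{2h+1}+O\bigg(\frac{h}{\sigma_S^2}\bigg),\]
and applying the integrated CLT to the CDF differences together with a Taylor expansion of $\Phi$ produces
\[f(s)=\frac{1}{\sqrt{2\pi}\sigma_S}\exp\bigg(-\frac{(s-\mu_S)^2}{2\sigma_S^2}\bigg)+O\bigg(\frac{h}{\sigma_S^2}+\frac{1}{h\sqrt{p(1-p)n}}\bigg).\]
Choosing $h$ to balance the two error terms leaves comfortable slack relative to the stated exponent $1/5$.

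The main obstacle is making Bender's step quantitative: one must track the log-concave smoothness carefully across the averaging window and, in particular, separately handle the tails $|s-\mu_S|\gtrsim\sigma_S\sqrt{\log n}$ using the exponential decay of $f$ implied by log-concavity together with standard large-deviation bounds for binomials applied to the unconditioned $X_k$. With these pieces in hand, the estimate holds uniformly in $i\in[n+m]$ and $s\in\mb{Z}$.
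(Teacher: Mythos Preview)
Your three-step plan---Berry--Esseen for an integrated CLT, discrete log-concavity of the summands and hence of $S$, then Bender's averaging to upgrade to a local CLT---is exactly the paper's approach. Your route to log-concavity of $X_k^\pm$ is in fact cleaner: you observe that the PMF of $X_k^+$ factors as $f_{X_k}(x)\cdot\mb{P}[Y_k<x]$, a product of two log-concave sequences, whereas the paper argues via joint log-concavity of $(X_k,Y_k)$, preservation under restriction to the half-space $\{X_k>Y_k\}$, and then invokes a marginalization theorem from \cite{HKS19} to recover log-concavity of the marginal. Both work, but yours avoids the extra machinery.

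One correction on your Bender step: the intermediate claim that ``$f$ varies by a factor $1+O(h/\sigma_S)$ across any window of length $h$'' is not what the argument establishes and is not true in the tails. The paper uses only that $f$ is monotone on each side of its mode $m_S$. For $s$ at distance at least $n^{-1/4}\sigma_S$ from $m_S$, monotonicity sandwiches $f(s)$ between the averages of $f$ over the one-sided windows $[s,s+n^{-1/4}\sigma_S)$ and $(s-n^{-1/4}\sigma_S,s]$, and Berry--Esseen handles those averages. For $s$ within $n^{-1/4}\sigma_S$ of $m_S$, the lower bound comes from $f(s)\ge f(s+\lceil n^{-1/4}\sigma_S\rceil)$ (already estimated), and the upper bound from the log-concavity inequality $f(s)\le f(s+a)^2/f(s+2a)$ with $a=\lceil n^{-1/4}\sigma_S\rceil$. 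With that adjustment your sketch goes through.
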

\begin{proof}
Note that $X_k,Y_k$ have probabilities converging to that of a normalized Gaussian, by a local limit theorem. Combining with tail bounds, we easily see that $X_k^+$ has well-behaved (centered) moments: its variance is $\Theta(p(1-p)n)$ and its centered third moment is $\Theta((p(1-p)n)^{3/2})$. The same holds for $X_k^-$. Therefore, the Berry-Esseen theorem shows that the cumulative distribution functions of $S$ and $\mc{N}(\mu_S,\sigma_S^2)$ differ by $O(1/\sqrt{n})$ everywhere.

Next, note that $X_k,Y_k$ have log-concave probability mass functions (on $\mb{Z}$) by log-concavity of binomials, hence $(X_k,Y_k)$ has a jointly log-concave probability mass function in the sense that
\[p(a,b)p(c,d)\le p\bigg(\bigg\lfloor\frac{a+c}{2}\bigg\rfloor,\bigg\lfloor\frac{b+d}{2}\bigg\rfloor\bigg)p\bigg(\bigg\lceil\frac{a+c}{2}\bigg\rceil,\bigg\lceil\frac{b+d}{2}\bigg\rceil\bigg).\]
Conditioning on a convex set preserves log-concavity in this sense, hence $(X_k,Y_k)$ conditional on $X_k\ge Y_k$ as well as conditional on $X_k < Y_k$ both have log-concave probability mass functions. By \cite[Theorem~1.2]{HKS19} (which is essentially reproves to \cite[Theorem~1.4]{KL19} but allows functions to be $0$), we see that the marginals of a distribution which is log-concave in this sense are log-concave. Therefore $X_k^+$, $X_k^-$ have log-concave probability mass functions.

Finally, convolutions of log-concave sequences are log-concave, so $S$ has log-concave probability mass function. We established earlier that it satisfies a quantitative central limit theorem. We now quantify an argument of Bender \cite{Ben73} in order to deduce the desired result.

Let $m_{S}$ be the mode of $S$. Above this value, the probability mass is nonincreasing, while below it is nondecreasing. First suppose that $s > m_{S} + n^{-1/4}\sigma_S$. We see that
\begin{align*}
\mb{P}[S = s]&\le\frac{1}{\lceil n^{-1/4}\sigma_S\rceil}\mb{P}[s\le S < s+n^{-1/4}\sigma_S]\\
&= \frac{1}{\lceil n^{-1/4}\sigma_S\rceil}\mb{P}[s\le\mc{N}(\mu_S,\sigma_S^2) < s+n^{-1/4}\sigma_S] + O(n^{-1/4}\sigma_S^{-1})\\
&= \frac{1}{\sqrt{2\pi}\sigma_S}\exp\bigg(-\frac{(s-\mu_S)^2}{2\sigma_S^2}\bigg) + O(n^{-1/5}\sigma_S^{-1}).
\end{align*}
The last line follows since $(s-\mu_S)^2/(2\sigma_S^2)$ is either stable up to a multiplicative factor of $(1+O(n^{-1/5}))$ upon changing $s$ by $\pm n^{-1/4}\sigma_S$ or is super-polynomially small (hence absorbed into the additive error term, since $\sigma_S^2 = \Theta(p(1-p)n^2)$ is polynomial). The lower bound is analogous. Furthermore, this holds for $s < m_{S} - n^{-1/4}\sigma_S$ by an identical argument. Therefore,
\[\mb{P}[S = s] = \frac{1}{\sqrt{2\pi}\sigma_S}\exp\bigg(-\frac{(s-\mu_S)^2}{2\sigma_S^2}\bigg) + O(n^{-1/5}\sigma_S^{-1})\]
as long as $s\notin m_{S}\pm n^{-1/4}\sigma_S$.

Finally, suppose that $m_{S}\le s\le m_{S}+n^{-1/4}\sigma_S$ (the symmetric case is analogous). We have
\begin{align*}
\mb{P}[S = s]&\ge\mb{P}[S = s + \lceil n^{-1/4}\sigma_S\rceil]\\
&= \frac{1}{\sqrt{2\pi}\sigma_S}\exp\bigg(-\frac{(s+\lceil n^{-1/4}\sigma_S\rceil-\mu_S)^2}{2\sigma_S^2}\bigg) + O(n^{-1/5}\sigma_S^{-1})\\
&= \frac{1}{\sqrt{2\pi}\sigma_S}\exp\bigg(-\frac{(s-\mu_S)^2}{2\sigma_S^2}\bigg) + O(n^{-1/5}\sigma_S^{-1}),
\end{align*}
where the last equality uses a similar argument to above. This is in fact enough to demonstrate that $|m_{S}-\mu_S| = O(n^{-1/5}\sigma_S)$ (since if it were too far, then the sequence would have an increase-decrease pattern twice).

Finally, we obtain an upper bound via log-concavity:
\begin{align*}
\mb{P}[S = s]\le\frac{\mb{P}[S = s + \lceil n^{-1/4}\sigma_S\rceil]^2}{\mb{P}[S = s + 2\lceil n^{-1/4}\sigma_S\rceil]} = \frac{1}{\sqrt{2\pi}\sigma_S}\exp\bigg(-\frac{(s-\mu_S)^2}{2\sigma_S^2}\bigg) + O(n^{-1/5}\sigma_S^{-1})
\end{align*}
by an analogous computation and the fact that $m_{S},\mu_S$ are close. The result follows.
\end{proof}

\subsection{Degree sequence models}\label{sub:graph-models}
We now define a plethora of degree sequence models for random graphs that will be needed for the computations. At a high level, the work of McKay and Wormald \cite{MW97} and McKay and Skerman \cite{MS16} demonstrate that degrees of random graphs look independent conditional on, for example, total edge count. These models provide a way to encapsulate these facts quantitatively.
\begin{definition}[Degree sequence domains]\label{def:degree-sets}
Let $I_n = \{0,\ldots,n-1\}^n$, $E_n$ be the even sum sequences in this set, and $I_n^\ell$ be the sum $\ell$ sequences. We will typically denote elements of these sets by $\mbf{d}$. Let $I_{m,n} = \{0,\ldots,n\}^m\times\{0,\ldots,m\}^n$, $E_{m,n}$ be the sequences with equal sums on both sides, and $E_{m,n}^\ell$ be the sequences with equal sums $\ell$. We will typically denote elements of these sets by $\mbf{s}$ of length $m$ and $\mbf{t}$ of length $n$. We will denote random variable versions of these by capital boldface instead.
\end{definition}
\begin{definition}[True degree models]\label{def:true-model}
$\mc{D}_p^n$ is the degree sequence distribution of $\mb{G}(n,p)$, which is a random variable supported on $E_n\subseteq I_n$. $\mc{D}_p^{m,n}$ is the degree sequence distribution of a bipartite graph with $m$ vertices on one side and $n$ on the other, each edge included independently with probability $p$, which is a random variable supported on $E_{m,n}\subseteq I_{m,n}$.
\end{definition}
\begin{definition}[Independent degree models]\label{def:independent-model}
$\mc{B}_p^n$ is the distribution of $n$ independent $\on{Bin}(n-1,p)$ random variables, supported on $I_n$. $\mc{B}_p^{m,n}$ is the distribution of $m$ independent $\on{Bin}(n,p)$ and $n$ independent $\on{Bin}(m,p)$ variables, supported on $I_{m,n}$.
\end{definition}
\begin{definition}[Conditioned degree models]\label{def:conditioned-model}
$\mc{E}_p^n$ is the distribution of $\mc{B}_p^n$ conditioned on having even sum, supported on $E_n$. $\mc{E}_p^{m,n}$ is the distribution of $\mc{B}_p^{m,n}$ conditioned on having equal sums on both sides, supported on $E_{m,n}$.
\end{definition}
\begin{definition}[Integrated degree models]\label{def:integrated-model}
$\mc{I}_p^n$ is the distribution sampled as follows. Sample $p'\sim\mc{N}(p,p(1-p)/(n^2-n))$, conditional on being in $(0,1)$. Then sample from $\mc{E}_{p'}^n$. $\mc{I}_p^{m,n}$ is the distribution sampled as follows. Sample $p'\sim\mc{N}(p,p(1-p)/(2mn))$, conditional on being in $(0,1)$. Then sample from $\mc{E}_{p'}^{m,n}$.
\end{definition}

We are now ready to state the necessary results.
\begin{theorem}[{From~\cite[Theorem~3(ii)]{MW90},~\cite[Theorem~3.6]{MW97}}]\label{thm:graph-indep}
There is $c > 0$ and a growing function so that the following holds. Let $n\ge 2$ and suppose $(\log n)^{-1/4}\le p\le 1-(\log n)^{-1/4}$. There is an event $B_p^n\subseteq I_n$ such that $\mb{P}_{\mc{D}_p^n}[B_p^n] = n^{-\omega(1)}$ and uniformly for all $\mbf{d}\in I_n\setminus B_p^n$ we have
\[\mb{P}_{\mc{D}_p^n}[\mbf{D}=\mbf{d}] = (1+O(n^{-c}))\mb{P}_{\mc{I}_p^n}[\mbf{D} = \mbf{d}]\]
\end{theorem}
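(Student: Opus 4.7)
The plan is to reduce both sides to explicit formulas and invoke the asymptotic enumeration of McKay--Wormald. First I would define the bad set $B_p^n \subseteq I_n$ to consist of those $\mbf{d}$ failing either (i) $|d_i - p(n-1)|\le C\sqrt{p(1-p)n\log n}$ for every coordinate $i$, or (ii) $|\sum_i d_i - pn(n-1)|\le C\sqrt{p(1-p)n^2\log n}$. Under $\mc{D}_p^n$, both conditions fail with super-polynomially small probability by Chernoff and a union bound (using $p\ge(\log n)^{-1/4}$), so $\mb{P}_{\mc{D}_p^n}[B_p^n]=n^{-\omega(1)}$; meanwhile, $I_n\setminus B_p^n$ is precisely the regime in which the McKay--Wormald asymptotic formula applies. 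Degree sequences not in $E_n$ trivially satisfy the claimed equality since both sides vanish.

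For $\mbf{d}\in E_n \setminus B_p^n$ write
\[\mb{P}_{\mc{D}_p^n}[\mbf{D}=\mbf{d}] \;=\; N(\mbf{d})\,p^{M}(1-p)^{\binom{n}{2}-M},\qquad M=\tfrac{1}{2}\sum_i d_i,\]
where $N(\mbf{d})$ is the number of simple graphs on $[n]$ with degree sequence $\mbf{d}$. Substituting the dense asymptotic of \cite[Theorem~3.6]{MW97} for $N(\mbf{d})$ and rearranging yields
\[\mb{P}_{\mc{D}_p^n}[\mbf{D}=\mbf{d}] \;=\; (1+O(n^{-c}))\,\mb{P}[\on{Bin}(\tbinom{n}{2},p)=M]\cdot\mb{P}_{\mc{E}_{p_M}^n}[\mbf{D}=\mbf{d}],\qquad p_M := \tfrac{2M}{n(n-1)}.\]
Thus the graph distribution factors, up to a multiplicative $(1+O(n^{-c}))$, as a binomial probability for the edge count times the conditioned independent model at the matched density $p_M$.

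It remains to compare the right-hand side with $\mb{P}_{\mc{I}_p^n}[\mbf{D}=\mbf{d}]$, which integrates $\mb{P}_{\mc{E}_{p'}^n}[\mbf{D}=\mbf{d}]$ against the $\mc{N}(p,p(1-p)/(n^2-n))$ density in $p'$. On $I_n\setminus B_p^n$ one has $|p_M-p|\le O(\sqrt{(\log n)/n^2})\cdot\sqrt{p(1-p)}$, and a quantitative local central limit theorem for the binomial identifies $\mb{P}[\on{Bin}(\binom{n}{2},p)=M]$ with that Gaussian density evaluated at $p_M$ up to a factor $1+O(n^{-c})$. Finally, since $\mb{P}_{\mc{E}_{p'}^n}[\mbf{D}=\mbf{d}]$ is a sharply-peaked function of $p'$ (of width $\Theta(1/\sqrt{n(n-1)})$) centered near $p_M$, a Laplace-style expansion absorbs the $p'$ integral into the binomial-density factor and yields $\mb{P}_{\mc{I}_p^n}[\mbf{D}=\mbf{d}]$ up to the required error. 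The main obstacle is this final identification: one must track the quadratic correction factor in the McKay--Wormald formula and verify that it is exactly the Laplace approximation to the $\mc{N}(p,p(1-p)/(n^2-n))$ integration against the peaked factor $(p')^{2M}(1-p')^{n(n-1)-2M}$. This reduces to a variance-matching computation which is the combinatorial substance of \cite[Theorem~3]{MW90}.
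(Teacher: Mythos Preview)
The paper does not prove this theorem; it is stated as a known result, attributed in its heading to \cite[Theorem~3(ii)]{MW90} and \cite[Theorem~3.6]{MW97}, and is simply invoked in \cref{sub:transfer} without further justification. So there is no ``paper's own proof'' to compare against.

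That said, your sketch is a faithful outline of how the cited results are established. The decomposition $\mb{P}_{\mc{D}_p^n}[\mbf{D}=\mbf{d}]=N(\mbf{d})p^M(1-p)^{\binom{n}{2}-M}$ followed by substitution of the enumeration formula (which in this paper appears as \cref{thm:enum-graph}) is exactly the route taken in \cite{MW97}. Your choice of bad set is the natural one, and the reduction to a Laplace-type matching of the correction factor $\exp(1/4-\gamma_2^2/(4\mu^2(1-\mu)^2))$ against the Gaussian integration in $p'$ is precisely the content of \cite[Theorem~3]{MW90}. One small point: your intermediate factorization as $\mb{P}[\on{Bin}(\tbinom{n}{2},p)=M]\cdot\mb{P}_{\mc{E}_{p_M}^n}[\mbf{D}=\mbf{d}]$ is not quite right on its own, since the $\gamma_2^2$ correction and a residual $\sqrt{n}$-order factor from $\binom{n(n-1)}{2M}^{-1}$ do not cancel until one performs the Gaussian integration; you correctly flag this as the remaining work, but the formula as displayed already claims more than has been shown at that stage.
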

\begin{theorem}[{From~\cite[Theorem~1(a)]{MS16}}]\label{thm:bipartite-indep}
There is $c > 0$ so that the following holds. Suppose $m,n\ge 2$ are such that $m = O(n\sqrt{\log n})$ and $n = O(m\sqrt{\log m})$. Suppose that $(\log n)^{-1/4}\le p\le 1-(\log n)^{-1/4}$. Then there is an event $B_p^{m,n}\subseteq I_{m,n}$ such that $\mb{P}_{\mc{D}_p^{m,n}}[B_p^{m,n}] = O(\exp(-n^c))$ and uniformly for $(\mbf{s},\mbf{t})\in I_{m,n}\setminus B$ we have
\[\mb{P}_{\mc{D}_p^{m,n}}[\mbf{S} = \mbf{s}\wedge\mbf{T} = \mbf{t}] = (1+O(n^{-1/3}))\mb{P}_{\mc{I}_p^{m,n}}[\mbf{S} = \mbf{s}\wedge\mbf{T} = \mbf{t}].\]
\end{theorem}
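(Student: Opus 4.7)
The plan is to reduce the statement to the asymptotic enumeration formula of McKay and Skerman for the number $N(\mbf{s},\mbf{t})$ of bipartite graphs on vertex parts of sizes $m$ and $n$ realizing prescribed degree sequences $\mbf{s}$ and $\mbf{t}$. Since $\mb{P}_{\mc{D}_p^{m,n}}[\mbf{S}=\mbf{s}\wedge\mbf{T}=\mbf{t}]=N(\mbf{s},\mbf{t})\,p^{e}(1-p)^{mn-e}$ with $e=|\mbf{s}|=|\mbf{t}|$, and since $\mb{P}_{\mc{B}_p^{m,n}}$ factors over vertices as a product of binomial point probabilities, the theorem becomes an asymptotic comparison between $N(\mbf{s},\mbf{t})$ and $\prod_i\binom{n}{s_i}\prod_j\binom{m}{t_j}$, followed by a matching of the $p'$-integration in $\mc{I}_p^{m,n}$ with the correction factors that appear.

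The engine I would quote is the McKay--Skerman enumeration formula, which on a good set gives
\[
N(\mbf{s},\mbf{t})=(1+O(n^{-1/3}))\binom{mn}{e}^{-1}\prod_{i=1}^{m}\binom{n}{s_i}\prod_{j=1}^{n}\binom{m}{t_j}\exp(Q(\mbf{s},\mbf{t})),
\]
where $Q$ is an explicit quadratic form in the centered deviations $s_i-e/m$ and $t_j-e/n$ with coefficients depending only on $p,m,n$. I would take $B_p^{m,n}$ to be the set where some $|s_i-e/m|$, $|t_j-e/n|$, or $|e-mnp|$ exceeds $n^{1/2+\eta}$ for a small $\eta>0$; Chernoff bounds on rows, columns, and total edge count in $\mb{G}(m,n,p)$ immediately give $\mb{P}_{\mc{D}_p^{m,n}}[B_p^{m,n}]=\exp(-n^c)$, and off this set the formula above and all of the local-CLT manipulations below are simultaneously valid.

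Next I would compute $\mb{P}_{\mc{E}_p^{m,n}}[\mbf{S}=\mbf{s}\wedge\mbf{T}=\mbf{t}]$ in closed form: the unconditioned $\mc{B}_p^{m,n}$ mass is $p^{2e}(1-p)^{2mn-2e}\prod_i\binom{n}{s_i}\prod_j\binom{m}{t_j}$, and the equal-sum probability $\mb{P}_{\mc{B}_p^{m,n}}[\sum_iS_i=\sum_jT_j]$ is asymptotic to $(4\pi mnp(1-p))^{-1/2}$ by a local CLT for the difference of two independent $\on{Bin}(mn,p)$'s. Multiplying and dividing by $\binom{mn}{e}p^e(1-p)^{mn-e}$, which by a local CLT equals $(2\pi mnp(1-p))^{-1/2}\exp(-(e-mnp)^2/(2mnp(1-p)))(1+o(1))$, reduces the ratio $\mb{P}_{\mc{D}_p^{m,n}}/\mb{P}_{\mc{E}_p^{m,n}}$ to $\exp(Q(\mbf{s},\mbf{t}))$ times an explicit Gaussian in $e-mnp$ and harmless absolute constants. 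A completing-the-square / Laplace calculation then shows that integrating $(p')^e(1-p')^{mn-e}$ against the truncated Gaussian prior on $p'$ with variance $p(1-p)/(2mn)$ from $\mc{I}_p^{m,n}$ exactly cancels this Gaussian factor in $e-mnp$, while the per-vertex shift $p\mapsto p'$ inside the independent model propagates through the binomial means to produce precisely the quadratic correction $\exp(Q)$ in the centered degrees; the truncation to $(0,1)$ is harmless since $p$ is bounded away from the endpoints.

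The main obstacle is establishing the enumeration formula itself to the claimed uniform relative precision $1+O(n^{-1/3})$ over the good set; this is the technical heart of \cite{MS16}, combining delicate switching arguments that relate $N(\mbf{s},\mbf{t})$ to its degree-sequence lattice neighbors $N(\mbf{s}\pm\mbf{1}_i,\mbf{t}\pm\mbf{1}_j)$ with sharp control on the induced local statistics of a typical degree sequence. Granted that formula as a black box, the remainder of the plan is bookkeeping with Gaussian densities and binomial local limit theorems.
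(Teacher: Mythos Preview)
The paper does not prove this theorem at all: it is stated as a direct quotation of \cite[Theorem~1(a)]{MS16} and used as a black box. So there is no ``paper's own proof'' to compare against.

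As for your sketch, the overall architecture (enumeration formula for $N(\mbf{s},\mbf{t})$, then match against the Gaussian-integrated independent model) is the natural route, and the pieces involving Chernoff for the bad set and the binomial local CLT for $\binom{mn}{e}p^e(1-p)^{mn-e}$ and for the equal-sum probability are fine. The gap is in the sentence ``the per-vertex shift $p\mapsto p'$ inside the independent model propagates through the binomial means to produce precisely the quadratic correction $\exp(Q)$ in the centered degrees.'' This is not correct as written: in $\mc{B}_{p'}^{m,n}$ the point mass at $(\mbf{s},\mbf{t})$ factors as $P\cdot (p')^{2e}(1-p')^{2(mn-e)}$ with $P=\prod_i\binom{n}{s_i}\prod_j\binom{m}{t_j}$, so the dependence on $p'$ enters only through the total edge count $e$. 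Consequently the Laplace integration over $p'$ can only manufacture a correction that is a function of $e$; it cannot by itself reproduce the $\gamma_2(s)^2,\gamma_2(t)^2$-dependent factor $\exp\bigl(-\tfrac12(1-\gamma_2(s)^2/\mu(1-\mu))(1-\gamma_2(t)^2/\mu(1-\mu))\bigr)$ that appears in the enumeration formula (cf.\ \cref{thm:enum-bigraph}).

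What actually happens is that on a correctly chosen good set one must also pin down $\gamma_2(s)^2$ and $\gamma_2(t)^2$ to within $O(n^{-1/3})$ of their typical values $np(1-p)/m$ and $mp(1-p)/n$; only then does the enumeration correction become, up to acceptable error, a constant depending solely on $m,n,p$, which is then absorbed by the normalization (both sides being probability measures). Your proposed good set, which only bounds the individual $|s_i-e/m|$ and $|t_j-e/n|$, allows $\gamma_2^2$ to wander over an $O(1)$ range, so the ratio $\mb{P}_{\mc{D}_p}/\mb{P}_{\mc{I}_p}$ is not constant on it. Tightening the good set (and verifying it still has mass $1-\exp(-n^c)$, which requires a concentration argument for $\sum_i(s_i-\bar s)^2$) is exactly the additional work that \cite{MS16} carries out; this, rather than the enumeration formula per se, is the missing ingredient in your plan.
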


\subsection{Computing a local limit result}\label{sub:local-limit}
\subsubsection{Transferring to an independent model}\label{sub:transfer}
Now consider sampling $\mb{G}(2n+\Delta,p)$ and revealing the degrees among each part $R_0$ and $B_0$ as well from vertices in $R_0$ to $B_0$ and vice versa. We swap vertices purely based on this degree information. Since the sizes of the swapped parts are measurable with respect to this, which has distribution coming from three independent Erd\H{o}s-Renyi graph models, we see by \cref{thm:graph-indep,thm:bipartite-indep} that up to a multiplicative factor of $1+O(n^{-c})$ and an additive error of $n^{-\omega(1)}$ it is enough to compute the relevant probabilities if the models on the parts are $\mc{I}_p^{n+\Delta}$, $\mc{I}_p^n$, and $\mc{I}_p^{n+\Delta,n}$ instead. We let $\mbf{d}$ be the degree sequence of size $n+\Delta$, $\mbf{d}'$ be the one of length $n$, and $\mbf{s},\mbf{t}$ be of length $m=n+\Delta$ and $n$.

At this point it is useful to define $R_0 = \{n+1,\ldots,2n+\Delta\}$ and $B_0 = [n]$ as usual and define the swapped sets $R_1,B_1$ purely as functions of a triple of degree sequences $(\mbf{d},\mbf{d}',(\mbf{s},\mbf{t}))$ from $I_{n+\Delta}$, $I_n$, and $I_{n+\Delta,n}$. (We define it in the obvious way so as to apply even if the total sum in $I_n$ is not even, or the sums across both sides in $I_{n+\Delta,n}$ are not equal.)

With this in mind, the transference described above can be written quantitatively as
\begin{align}
\mb{P}_{\substack{\mc{D}_p^{n+\Delta},\mc{D}_p^n\\\mc{D}_p^{n+\Delta,n}}}[|R_0\cap R_1| = x\wedge|B_0\cap B_1| = y] = (1+O(n^{-c}))\mb{P}_{\substack{\mc{I}_p^{n+\Delta},\mc{I}_p^n\\\mc{I}_p^{n+\Delta,n}}}[&|R_0\cap R_1| = x\wedge|B_0\cap B_1| = y]\notag\\
&+ O(n^{-\omega(1)}).\label{eq:D-I}
\end{align}
Furthermore,
\begin{align}
&\mb{P}_{\substack{\mc{I}_p^{n+\Delta},\mc{I}_p^n\\\mc{I}_p^{n+\Delta,n}}}[|R_0\cap R_1| = x\wedge|B_0\cap B_1| = y]\notag\\
&= \frac{1}{\int_{q_0,q_1,q_2\in[0,1]}d\mu(q_0,q_1,q_2)}\int_{q_0,q_1,q_2\in[0,1]}\mb{P}_{\substack{\mc{E}_{q_0}^{n+\Delta},\mc{E}_{q_1}^n\\\mc{E}_{q_2}^{n+\Delta,n}}}[|R_0\cap R_1| = x\wedge|B_0\cap B_1| = y]d\mu(q_0,q_1,q_2)\notag\\
&= \int_{q_0,q_1,q_2\in p\pm20\sqrt{p(1-p)\log n}/n}\mb{P}_{\substack{\mc{E}_{q_0}^{n+\Delta},\mc{E}_{q_1}^n\\\mc{E}_{q_2}^{n+\Delta,n}}}[|R_0\cap R_1| = x\wedge|B_0\cap B_1| = y]d\mu(q_0,q_1,q_2) + O(n^{-10}),\label{eq:I-E}
\end{align}
where $\mu$ denotes the measure of three independent Gaussians centered at $p$ with variances $p(1-p)/((n+\Delta)^2-(n+\Delta))$, $p(1-p)/(n^2-n)$, and $p(1-p)/(2n(n+\Delta))$. The last line follows since such Gaussians lie in $(0,1)$ with exponentially good probability, and in fact are of size $p\pm20\sqrt{p(1-p)\log n}/n$ with probability at least $1-n^{-10}$.

At this point, we have nearly reached a model with independent Bernoulli sequences. However, we must condition on being even sum or having equal sum across two sides. To deal with this, we iteratively apply Bayes's rule to reduce to understanding genuinely independent random variables. This technique is closely related that in the proof given for \cite[Theorem~8]{MS16}. We have
\begin{align}
\mb{P}_{\substack{\mc{E}_{q_0}^{n+\Delta},\mc{E}_{q_1}^n\\\mc{E}_{q_2}^{n+\Delta,n}}}&[|R_0\cap R_1| = x\wedge|B_0\cap B_1| = y]\notag\\
&=\frac{\mb{P}_{\mc{B}_{q_0}^{n+\Delta},\mc{B}_{q_1}^n,\mc{B}_{q_2}^{n+\Delta,n}}[|R_0\cap R_1| = x\wedge|B_0\cap B_1| = y\wedge |\mbf{D}|/2,|\mbf{D}'|/2\in\mb{Z}\wedge|\mbf{S}| = |\mbf{T}|]}{\mb{P}_{\mc{B}_{q_0}^{n+\Delta},\mc{B}_{q_1}^n,\mc{B}_{q_2}^{n+\Delta,n}}[|\mbf{D}|/2,|\mbf{D}'|/2\in\mb{Z}\wedge|\mbf{S}| = |\mbf{T}|]}.\label{eq:E-B-even}
\end{align}
At this point, every event being considered is essentially coming from a sum of independent binomials or counting inequalities between independent binomials, so one should expect that these probabilities can be computed precisely. We can in fact do this, although we choose to iteratively simplify the expression by removing portions that ``act independent''.

\subsubsection{Removing evenness}\label{sub:evenness}
First, reveal $\mc{B}_{q_2}^{n+\Delta,n}$, that is, $\mbf{S}$ and $\mbf{T}$. Further reveal $R_0\cap R_1$ and $B_0\cap B_1$. Clearly the remaining randomness is as follows: for $v\in R_0\cap R_1$, we sample $d_v\sim\on{Bin}(n+\Delta,p_0)|_{\ge s_v}$, and similar for the other three parts. Note that with probability at least $1-2\exp(-\Omega(n))$ there are at least $n/4$ vertices $v\in R_0$ with $s_i\in pn\pm 100\sqrt{p(1-p)n}$ and at least $n/4$ vertices $v\in B_0$ with $t_i\in pn\pm 100\sqrt{p(1-p)n}$. For such vertices, regardless of whether it was revealed to be in $R_0\cap R_1$ or $R_0\setminus R_1$ (and similar for blue vertices), we see that the conditional distribution of its degree is some conditioned binomial that is easily checked to be equidistributed $(\mr{mod}~2)$ up to say an error of $O(n^{-1/4})$. If we reveal the degrees of every other vertex, then add up $n/4$ of these random variables, we obtain equidistribution $(\mr{mod}~2)$ where both values are attained with probability $1/2+O(\exp(-n))$. Therefore the numerator and denominator satisfy
\begin{align}
&\frac{\mb{P}_{\mc{B}_{q_0}^{n+\Delta},\mc{B}_{q_1}^n,\mc{B}_{q_2}^{n+\Delta,n}}[|R_0\cap R_1| = x\wedge|B_0\cap B_1| = y\wedge |\mbf{D}|/2,|\mbf{D}'|/2\in\mb{Z}\wedge|\mbf{S}| = |\mbf{T}|]}{\mb{P}_{\mc{B}_{q_0}^{n+\Delta},\mc{B}_{q_1}^n,\mc{B}_{q_2}^{n+\Delta,n}}[|\mbf{D}|/2,|\mbf{D}'|/2\in\mb{Z}\wedge|\mbf{S}| = |\mbf{T}|]}\notag\\
&= \frac{(\frac{1}{4}+O(\exp(-n)))\mb{P}_{\mc{B}_{q_0}^{n+\Delta},\mc{B}_{q_1}^n,\mc{B}_{q_2}^{n+\Delta,n}}[|R_0\cap R_1| = x\wedge|B_0\cap B_1| = y\wedge|\mbf{S}| = |\mbf{T}|] + O(\exp(-\Omega(n)))}{(\frac{1}{4}+O(\exp(-n)))\mb{P}_{\mc{B}_{q_2}^{n+\Delta,n}}[|\mbf{S}| = |\mbf{T}|] + O(\exp(-\Omega(n)))}\notag\\
&= \frac{\mb{P}_{\mc{B}_{q_0}^{n+\Delta},\mc{B}_{q_1}^n,\mc{B}_{q_2}^{n+\Delta,n}}[|R_0\cap R_1| = x\wedge|B_0\cap B_1| = y\wedge|\mbf{S}| = |\mbf{T}|]}{\mb{P}_{\mc{B}_{q_2}^{n+\Delta,n}}[|\mbf{S}| = |\mbf{T}|]} + O(\exp(-\Omega(n))).\label{eq:B-even-B}
\end{align}
In the last line, we used that the final denominator probability is large. This can be seen since it is the chance that two samples of $\on{Bin}(n(n+\Delta),q_2)$ equal each other. Being the same distribution supported on $[0,n(n+\Delta)]$, we see this occurs with probability at least $1/(n(n+\Delta)+1)$ by Cauchy-Schwarz.

\subsubsection{Computing the numerator}\label{sub:numerator}
In fact, this denominator can be computed precisely using a local limit theorem for binomial random variables. We therefore focus attention on computing the numerator. We have
\begin{align}
&\mb{P}_{\mc{B}_{q_0}^{n+\Delta},\mc{B}_{q_1}^n,\mc{B}_{q_2}^{n+\Delta,n}}[|R_0\cap R_1| = x\wedge|B_0\cap B_1| = y\wedge|\mbf{S}| = |\mbf{T}|]\notag\\
&= \sum_{|A|=x,|B|=y}\mb{P}_{\mc{B}}[R_0\cap R_1 = A\wedge B_0\cap B_1 = B]\mb{P}_{\mc{B}}[|\mbf{S}| = |\mbf{T}||R_0\cap R_1 = A\wedge B_0\cap B_1 = B].\label{eq:B-B-cond}
\end{align}
We can exactly compute the distribution of $|R_0\cap R_1|$ and $|B_0\cap B_1|$, which are independent. We make the following definitions for convenience going forward:
\begin{itemize}
    \item $q_i = p + \alpha_i/n$ for $0\le i\le 2$, where $|\alpha_i|\le 20\sqrt{p(1-p)\log n}$;
    \item $X_k\sim\on{Bin}(n+\Delta-1,q_0)$ and $Y_k\sim\on{Bin}(n,q_2)$ for $k\in[n+\Delta]$;
    \item $Y_k^-$ is the distribution of $Y_k$ conditional on $Y_k\le X_k$ and $Y_k^+$ is conditional on $Y_k > X_k$;
    \item $Z_k\sim\on{Bin}(n+\Delta,q_2)$ and $W_k\sim\on{Bin}(n-1,q_1)$ for $k\in[n]$;
    \item $Z_k^-$ is $Z_k$ conditioned on $Z_k\le W_k$ and $Z_k^+$ is conditioned on $Z_k > W_k$;
    \item $r = \mb{P}[X_1\ge Y_1]$ and $b = \mb{P}[W_1\ge Z_1]$.
\end{itemize}

We have
\begin{align*}
|R_0\cap R_1| &= \sum_{k=1}^{n+\Delta}\mbm{1}[X_k\ge Y_k]\sim\on{Bin}(n+\Delta,r),\\
|B_0\cap B_1| &= \sum_{k=1}^n\mbm{1}[W_k\ge Z_k]\sim\on{Bin}(n,b).
\end{align*}
Additionally, we can compute the distributions of $|\mbf{S}|$ and $|\mbf{T}|$ conditional on $A = R_0\cap R_1$ and $B = B_0\cap B_1$, which are independent. It actually only depends on the sizes. If we condition on $|A| = x$ and $|B| = y$, we have
\begin{align*}
|\mbf{S}|&\sim\sum_{k=1}^x Y_k^- + \sum_{k=x+1}^{n+\Delta}Y_k^+,\\
|\mbf{T}|&\sim\sum_{k=1}^y Z_k^- + \sum_{k=y+1}^n Z_k^+.
\end{align*}
At this point, computing the probability that $|\mbf{S}| - |\mbf{T}| = 0$ amounts to proving a local central limit theorem for all possible mixed sums and differences of these independent random variables. We have already done this in \cref{prop:chopped-lclt}. Explicitly, this means that for $|A| = x$ and $|B| = y$ that
\begin{align*}
\mb{P}_{\mc{B}}[|\mbf{S}| = |\mbf{T}||R_0\cap R_1 = A\wedge B_0\cap B_1 = B] &= \frac{1}{\sqrt{2\pi}\sigma_{x,y}}\exp\bigg(\frac{\mu_{x,y}^2}{2\sigma_{x,y}^2}\bigg) + O(n^{-1/5}\sigma_{x,y}^{-1})
\end{align*}
where $\mu_{x,y}$, $\sigma_{x,y}^2$ are the mean and variance of $|\mbf{S}|-|\mbf{T}|$ conditional on $|A| = x$ and $|B| = y$.

By \cref{lem:chop-statistics}, we have
\[\sigma_{x,y}^2 = \bigg(2-\frac{2}{\pi}\bigg)p(1-p)n^2 + O(n^{7/4}).\]
Therefore, let $\sigma = \sqrt{(2-2/\pi)p(1-p)}n$ and note that for $|A| = x$ and $|B| = y$ we have
\begin{align*}
\mb{P}_{\mc{B}}[|\mbf{S}| = |\mbf{T}||R_0\cap R_1 = A\wedge B_0\cap B_1 = B] &= \frac{1}{\sqrt{2\pi}\sigma}\exp\bigg(\frac{\mu_{x,y}^2}{2\sigma^2}\bigg) + O(n^{-1/5}\sigma^{-1}).
\end{align*}

It remains to understand $\mu_{x,y}$. We have
\begin{align*}
\mu_{x,y} = x\mb{E}Y_k^- + (n+\Delta-x)\mb{E}Y_k^+ - y\mb{E}Z_k^- - (n-y)\mb{E}Z_k^+.
\end{align*}

\begin{claim}\label{clm:mu-x-y}
If $|x-n/2|,|y-n/2|\le\sqrt{n}\log n$ we have
\[\mu_{x,y} = \bigg(\frac{\alpha_1-\alpha_0-2p\Delta}{\pi}\bigg)n + 2\sqrt{\frac{p(1-p)n}{\pi}}(x-y) + O(n^{4/5}).\]
\end{claim}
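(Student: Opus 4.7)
The plan is to avoid the naive substitution of \cref{lem:chop-statistics} directly into $\mu_{x,y}=x\mb{E}Y_k^- + (n+\Delta-x)\mb{E}Y_k^+ - y\mb{E}Z_k^- - (n-y)\mb{E}Z_k^+$: each prefactor is $\Theta(n)$ and each conditional expectation carries an $O(n^{1/4})$ error, so this crude approach loses $\Theta(n^{5/4})$, far beyond the target $O(n^{4/5})$. Instead I would first cancel the main $\Theta(n^2)$ contributions via an identity, and only afterwards invoke the error-bearing lemmas on the smaller residuals.

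The identity I would use is the tower decomposition
\[a\,\mb{E}X^- + (N-a)\,\mb{E}X^+ = N\,\mb{E}X + (a - Nr)(\mb{E}X^- - \mb{E}X^+),\]
where $r$ is the probability defining the conditioning. Crucially, $\mb{E}Y_k = nq_2$ and $\mb{E}Z_k = (n+\Delta)q_2$ are \emph{exact}, so $(n+\Delta)\mb{E}Y_k = n\mb{E}Z_k = n(n+\Delta)q_2$, and these main terms cancel perfectly in $\mu_{x,y}$, leaving
\[\mu_{x,y} = (x - (n+\Delta)r_Y^-)(\mb{E}Y_k^- - \mb{E}Y_k^+) - (y - nr_Z^-)(\mb{E}Z_k^- - \mb{E}Z_k^+),\]
with $r_Y^- = \mb{P}[Y_1 \le X_1]$ and $r_Z^- = \mb{P}[Z_1 \le W_1]$. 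I would then apply \cref{lem:chop-probability} (using lemma-base $n$ for $r_Y^-$ and lemma-base $n+\Delta$ for $r_Z^-$; the shift $\Delta=O((\log n)^{1/4})$ contributes only negligible error to both the numerator and the $\sqrt{\pi p(1-p)n}$ denominator) to obtain
\begin{align*}
r_Y^- &= \tfrac12 + \frac{p(\Delta-1)+\tfrac12+\alpha_0-\alpha_2}{2\sqrt{\pi p(1-p)n}} + O(n^{-3/4}),\\
r_Z^- &= \tfrac12 + \frac{p(-\Delta-1)+\tfrac12+\alpha_1-\alpha_2}{2\sqrt{\pi p(1-p)n}} + O(n^{-3/4}),
\end{align*}
and \cref{lem:chop-statistics} (similarly reindexed) to obtain $\mb{E}Y_k^+ - \mb{E}Y_k^- = 2\sqrt{p(1-p)n/\pi} + O(n^{1/4})$ and the same for $Z_k$.

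With the hypothesis $|x-n/2|,|y-n/2|\le\sqrt{n}\log n$, each residual factor $x-(n+\Delta)r_Y^-$ and $y-nr_Z^-$ is $O(\sqrt{n}\log n)$, so the dominant error term after multiplication is $O(\sqrt{n}\log n)\cdot O(n^{1/4}) = O(n^{3/4}\log n)\subseteq O(n^{4/5})$. The main term $(x-\tfrac{n}{2})\cdot(-2\sqrt{p(1-p)n/\pi})$ and its $y$-analogue combine to produce the $(x-y)$ coefficient $\pm 2\sqrt{p(1-p)n/\pi}$; meanwhile the subleading pieces of $(n+\Delta)r_Y^-$ and $nr_Z^-$, multiplied by $-2\sqrt{p(1-p)n/\pi}$ (using the convenient algebraic identity $\sqrt{n/(4\pi p(1-p))}\cdot 2\sqrt{p(1-p)n/\pi} = n/\pi$), contribute $\pm(n/\pi)$ times $(p(\Delta-1)+\tfrac12+\alpha_0-\alpha_2) - (p(-\Delta-1)+\tfrac12+\alpha_1-\alpha_2) = 2p\Delta + \alpha_0 - \alpha_1$ after the $\tfrac12$'s cancel. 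This matches the claimed form. The only real obstacle is careful bookkeeping—verifying that the several distinct base sizes $n$, $n-1$, $n+\Delta$, and $n+\Delta-1$ in the various lemma applications differ by negligible amounts (which they do, since $\Delta = O((\log n)^{1/4})$ is much smaller than $\sqrt{n}$).
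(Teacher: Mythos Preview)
Your proposal is correct and follows essentially the same approach as the paper: both use the tower identity $a\,\mb{E}X^-+(N-a)\,\mb{E}X^+=N\,\mb{E}X+(a-Nr)(\mb{E}X^--\mb{E}X^+)$ so that the exact equality $(n+\Delta)\mb{E}Y_k=n\mb{E}Z_k$ cancels the $\Theta(n^2)$ contributions, and then apply \cref{lem:chop-probability} to $r,b$ and \cref{lem:chop-statistics} to $\mb{E}Y_k^\pm,\mb{E}Z_k^\pm$ on the $O(\sqrt{n}\log n)$-sized residuals. Your error accounting $O(\sqrt{n}\log n)\cdot O(n^{1/4})\subseteq O(n^{4/5})$ and the algebraic simplification via $\sqrt{n/(4\pi p(1-p))}\cdot 2\sqrt{p(1-p)n/\pi}=n/\pi$ match the paper's computation.
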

\begin{proof}
From \cref{lem:chop-statistics} we have
\begin{align*}
\mb{E}Y_k^+ &= pn + \sqrt{\frac{p(1-p)n}{\pi}} + O(n^{1/4}) = \mb{E}Z_k^+,\\
\mb{E}Y_k^- &= pn - \sqrt{\frac{p(1-p)n}{\pi}} + O(n^{1/4}) = \mb{E}Z_k^-.
\end{align*}
The error terms are not good enough to do a direct replacement. However, from \cref{lem:chop-probability} we have
\begin{align*}
r &= \mb{P}[X_1\ge Y_1] = \frac{1}{2} + \frac{p(\Delta-1) + 1/2 + \alpha_0 - \alpha_2}{2\sqrt{\pi p(1-p)n}} + O(n^{-3/4}),\\
b &= \mb{P}[W_1\ge Z_1] = \frac{1}{2} + \frac{p(-\Delta-1) + 1/2 + \alpha_1 - \alpha_2}{2\sqrt{\pi p(1-p)n}} + O(n^{-3/4})
\end{align*}
and additionally, by definition,
\begin{align*}
r\mb{E}Y_k^- + (1-r)\mb{E}Y_k^+ &= \mb{E}Y_k = pn + \alpha_2,\\
b\mb{E}Z_k^- + (1-b)\mb{E}Z_k^+ &= \mb{E}Z_k = pn + \alpha_2 + p\Delta + \alpha_2\Delta/n.
\end{align*}
Therefore
\begin{align*}
&\mu_{x,y} - (n+\Delta)(pn+\alpha_2) + n(pn+\alpha_2+p\Delta+\alpha_2\Delta/n)\\
&= (x-r(n+\Delta))(\mb{E}Y_k^--\mb{E}Y_k^+) - (y-bn)(\mb{E}Z_k^--\mb{E}Z_k^+)\\
&= (x-r(n+\Delta))\cdot2\sqrt{\frac{p(1-p)n}{\pi}} - (y-bn)\cdot2\sqrt{\frac{p(1-p)n}{\pi}} + O(n^{4/5})\\
&= 2\sqrt{\frac{p(1-p)n}{\pi}}((x-rn) - (y-bn)) + O(n^{4/5}).
\end{align*}
We deduce
\begin{align*}
&\mu_{x,y} - 2\sqrt{\frac{p(1-p)n}{\pi}}(x-y) + O(n^{4/5})\\
&= -2\sqrt{\frac{p(1-p)n}{\pi}}n\bigg(\frac{p(\Delta-1) + 1/2 + \alpha_0 - \alpha_2}{2\sqrt{\pi p(1-p)n}} - \frac{p(-\Delta-1) + 1/2 + \alpha_1 - \alpha_2}{2\sqrt{\pi p(1-p)n}}\bigg)\\
&= \bigg(\frac{-2p\Delta + \alpha_1-\alpha_0}{\pi}\bigg)n.\qedhere
\end{align*}
\end{proof}
We now make the following definitions.
\begin{itemize}
    \item Recall that $\sigma = \sqrt{(2-2/\pi)p(1-p)}n$.
    \item We have
    \begin{align*}
    r^\ast &= r^\ast(\alpha_0,\alpha_1,\alpha_2) = \frac{1}{2} + \frac{p(\Delta-1) + 1/2 + \alpha_0 - \alpha_2}{2\sqrt{\pi p(1-p)n}},\\
    b^\ast &= b^\ast(\alpha_0,\alpha_1,\alpha_2) = \frac{1}{2} + \frac{p(-\Delta-1) + 1/2 + \alpha_1 - \alpha_2}{2\sqrt{\pi p(1-p)n}},
    \end{align*}
    which are within $O(n^{-3/4})$ of $r,b$ by \cref{lem:chop-statistics}.
    \item We let
    \[\mu^\ast(\alpha_0,\alpha_1,\alpha_2,x,y) = \bigg(\frac{\alpha_1-\alpha_0-2p\Delta}{\pi}\bigg)n + 2\sqrt{\frac{p(1-p)n}{\pi}}(x-y),\]
    which satisfies $\mu_{x,y} = \mu^\ast(\alpha_0,\alpha_1,\alpha_2,x,y) + O(n^{4/5})$ for $|x-n/2|,|y-n/2|\le\sqrt{n}\log n$ by \cref{clm:mu-x-y}.
\end{itemize}

Now, continuing \eqref{eq:B-B-cond}, we find for $|x-n/2|,|y-n/2|\le\sqrt{n}\log n$ that
\begin{align}
&\mb{P}_{\mc{B}_{q_0}^{n+\Delta},\mc{B}_{q_1}^n,\mc{B}_{q_2}^{n+\Delta,n}}[|R_0\cap R_1| = x\wedge|B_0\cap B_1| = y\wedge|\mbf{S}| = |\mbf{T}|]\notag\\
&= \mb{P}[|R_0\cap R_1| = x]\mb{P}[|B_0\cap B_1| = y]\bigg(\frac{1}{\sqrt{2\pi}\sigma}\exp\bigg(\frac{\mu_{x,y}^2}{2\sigma^2}\bigg) + O(n^{-1/5}\sigma^{-1})\bigg)\notag\\
&= \phi_{rn,r(1-r)n}(x)\phi_{bn,b(1-b)n}(y)\phi_{0,\sigma^2}(\mu_{x,y}) + O(n^{-2-1/5})\notag\\
&= \phi_{r^\ast n,n/4}(x)\phi_{b^\ast n,n/4}(y)\phi_{0,\sigma^2}(\mu^\ast)+ O(n^{-2-1/5})\label{eq:B-Gaussian}
\end{align}
where $\phi_{a,b}$ denotes the pdf of the Gaussian with mean $a$ and variance $b$. In the second line we used that the conditional probability in \eqref{eq:B-B-cond} given $R_0\cap R_1$ and $B_0\cap B_1$ depends only on their sizes. The third line used a local limit theorem for binomials and appropriately expanding out error terms. The fourth line is just manipulation of established error terms in ways that we have seen already. Note that this equality is actually true if either $x$ or $y$ deviates by at least $\sqrt{n}\log n$ from $n/2$ as then the probability $|R_0\cap R_1| = x$ and $|B_0\cap B_1| = y$ is super-polynomially small. Therefore, this equation is true in general.

It is also worth mentioning by similar logic that if either $|x-n/2|\ge C\sqrt{\log n}$ or $|y-n/2|\ge C\sqrt{\log n}$ then
\begin{equation}\label{eq:B-sub-Gaussian}
\mb{P}_{\mc{B}_{q_0}^{n+\Delta},\mc{B}_{q_1}^n,\mc{B}_{q_2}^{n+\Delta,n}}[|R_0\cap R_1| = x\wedge|B_0\cap B_1| = y\wedge|\mbf{S}| = |\mbf{T}|]\le\mb{P}_{\mc{B}}[|R_0\cap R_1| = x] = O(n^{-10}).
\end{equation}

\subsection{Putting it together}\label{sub:integration}
Finally, note that the denominator of \cref{eq:B-even-B} is the probability that two samples of $\on{Bin}(n(n+\Delta),q_2)$ subtract to $0$. This satisfies a local limit theorem (e.g.~by \cite{Can80}) and has mean $0$ and variance $2q_2(1-q_2)n(n+\Delta) = 2p(1-p)n^2(1+O(n^{-1/2}))$, so
\begin{equation}\label{eq:B-denominator}
\mb{P}_{\mc{B}_{q_2}^{n+\Delta,n}}[|\mbf{S}| = |\mbf{T}|] = \frac{1}{2\sqrt{\pi p(1-p)}n} + O(n^{-5/4}).
\end{equation}

Putting together \cref{eq:D-I}, \cref{eq:I-E}, \cref{eq:E-B-even}, \cref{eq:B-even-B}, and \cref{eq:B-Gaussian} along with \cref{eq:B-denominator}, we obtain for some absolute $c > 0$ that
\begin{align*}
&\mb{P}_{\substack{\mc{D}_p^{n+\Delta},\mc{D}_p^n\\\mc{D}_p^{n+\Delta,n}}}[|R_0\cap R_1| = x\wedge|B_0\cap B_1| = y]\\
&= \int_{|\alpha_0|,|\alpha_1|,|\alpha_2|\le 20\sqrt{p(1-p)\log n}}\frac{\phi_{r^\ast n,n/4}(x)\phi_{b^\ast n,n/4}(y)\phi_{0,\sigma^2}(\mu^\ast)}{1/(2\sqrt{\pi p(1-p)}n)}d\nu(\alpha_0,\alpha_1,\alpha_2)+ O(n^{-1-c}).\notag\\
&= \int_{\alpha_0,\alpha_1,\alpha_2\in\mb{R}}\frac{\phi_{r^\ast n,n/4}(x)\phi_{b^\ast n,n/4}(y)\phi_{0,\sigma^2}(\mu^\ast)}{1/(2\sqrt{\pi p(1-p)}n)}d\nu(\alpha_0,\alpha_1,\alpha_2)+ O(n^{-1-c}).\notag
\end{align*}
where $\nu$ denotes the product measure of three independent Gaussians centered at $0$ with variances $p(1-p), p(1-p), p(1-p)/2$, respectively. Note that the difference between sampling the $\alpha$ values from $\nu$ or the $q$ values from $\mu$ is negligible.

Equivalently, we can sample $\beta_i = \alpha_i/\sqrt{p(1-p)}$ from Gaussians with variances $1,1,1/2$ for $i=0,1,2$, respectively. We have
\begin{align*}
&\int_{\alpha_0,\alpha_1,\alpha_2\in\mb{R}}\frac{\phi_{r^\ast n,n/4}(x)\phi_{b^\ast n,n/4}(y)\phi_{0,\sigma^2}(\mu^\ast)}{1/(2\sqrt{\pi p(1-p)}n)}d\nu(\alpha_0,\alpha_1,\alpha_2)\\
&= \frac{2\sqrt{\pi p(1-p)}n}{(\sqrt{2\pi})^5(\sqrt{\pi})(n/4)\sqrt{(2-2/\pi)p(1-p)}n}\int_\beta e^{-\frac{2(x-r^\ast n)^2+2(y-b^\ast n)^2}{n}-\frac{(\mu^\ast)^2}{(4-4/\pi)p(1-p)n^2}-\frac{\beta_0^2+\beta_1^2+2\beta_2^2}{2}}d\beta.
\end{align*}
When the values of $r^\ast,b^\ast,\mu^\ast$ are substituted in, this becomes a Gaussian integral in $\beta_0,\beta_1,\beta_2$.

Let
\begin{align*}
\sqrt{\frac{n}{4\pi}}x' &= x - \bigg(\frac{1}{2}+\frac{p(\Delta-1)+1/2}{2\sqrt{\pi p(1-p)n}}\bigg)n,\\
\sqrt{\frac{n}{4\pi}}y' &= y - \bigg(\frac{1}{2}+\frac{p(-\Delta-1)+1/2}{2\sqrt{\pi p(1-p)n}}\bigg)n.
\end{align*}
Then we deduce
\begin{align*}
&-\frac{2(x-r^\ast n)^2+2(y-b^\ast n)^2}{n}-\frac{(\mu^\ast)^2}{(4-4/\pi)p(1-p)n^2}-\frac{\beta_0^2+\beta_1^2+2\beta_2^2}{2}\\
&= -\frac{1}{2\pi p(1-p)}(x'\sqrt{p(1-p)}-\alpha_0+\alpha_2)^2 - \frac{1}{2\pi p(1-p)}(y'\sqrt{p(1-p)}-\alpha_1+\alpha_2)^2 \\
&\quad-\frac{1}{4\pi(\pi-1)p(1-p)}((x'-y')\sqrt{p(1-p)}+\alpha_1-\alpha_0)^2-\frac{\beta_0^2+\beta_1^2+2\beta_2^2}{2}\\
&= -\frac{1}{2\pi}(x'-\beta_0+\beta_2)^2-\frac{1}{2\pi}(y'-\beta_1+\beta_2)^2-\frac{1}{4\pi(\pi-1)}(x'-y'+\beta_1-\beta_0)^2-\frac{\beta_0^2+\beta_1^2+2\beta_2^2}{2}.
\end{align*}
Changing variables via $\alpha_i = \sqrt{p(1-p)}\beta_i$ therefore yields
\begin{align*}
&\int_{\alpha_0,\alpha_1,\alpha_2\in\mb{R}}\frac{\phi_{r^\ast n,n/4}(x)\phi_{b^\ast n,n/4}(y)\phi_{0,\sigma^2}(\mu^\ast)}{1/(2\sqrt{\pi p(1-p)}n)}d\nu(\alpha_0,\alpha_1,\alpha_2)\\
&= \frac{1}{\pi^2\sqrt{\pi-1}n}\int_\beta e^{-\frac{1}{2\pi}(x'-\beta_0+\beta_2)^2-\frac{1}{2\pi}(y'-\beta_1+\beta_2)^2-\frac{1}{4\pi(\pi-1)}(x'-y'+\beta_1-\beta_0)^2-\frac{\beta_0^2+\beta_1^2+2\beta_2^2}{2}}d\beta\\
&= \frac{2}{n\sqrt{\pi(2+\pi)}}\exp\bigg(-\frac{(1+\pi)(x')^2-2(x'y')+(1+\pi)(y')^2}{2\pi(2+\pi)}\bigg).
\end{align*}

Furthermore, if either $|x-n/2|\ge C\sqrt{\log n}$ or $|y-n/2|\ge C\sqrt{\log n}$ for appropriate $C > 0$ then we obtain a bound of size $O(n^{-5})$, which is easily seen using \cref{eq:B-sub-Gaussian} along with \cref{sub:numerator}. \cref{eq:D-I}, \cref{eq:I-E}, \cref{eq:E-B-even}, \cref{eq:B-even-B}, and \cref{eq:B-denominator}. This completes the proof of \cref{thm:day-one}.

\section{Tracking the remainder}\label{sec:days}
Now we adapt the approach of Ferber, Kwan, Narayanan, and the authors \cite{FKNSS21} to analyze the remainder of the majority dynamics process. Note that it is key that we computed what the leads were after day one at the scale of $\sqrt{n}$, since the techniques in that work only constrain objects at the scale $O(n^{1-\eta})$. However, essentially the same set of coarse data that is tracked in that work, along with the information from \cref{thm:day-one}, will allow us to perform an analysis of the remaining process via iterated revelation.

\subsection{Tracking degree parameters}\label{sub:tracking-data}
We first define the parameters that will be tracked, which are basically the joint degree distributions of each part of the graph to each of the other parts.

Given $k\ge 1$ and $x\in\{0,1\}^k$ of the form $(x_0,\ldots,x_{k-1})$, let $V_x = \cap_{i=0}^{k-1}X_i(x)$ where $X_i(0) = R_i$ and $X_i(1) = B_i$. Additionally, for $v\in B_0\cup R_0$ let
\[\on{deg}^{(k)} v = ((\deg_{V_x} v-p|V_x|)/\sqrt{p(1-p)n})_{x\in\{0,1\}^k}.\]
Finally, for $x\in\{0,1\}^k$ let $\mc{L}_x$ be the distribution of $\on{deg}^{(k)}v$ if we sample a uniform $v\in V_x$ (implicitly assuming it is nonempty).

It will be helpful to recall the following definition of Kolmogorov distance.
\begin{definition}\label{def:kolmogorov}
If $\mc{L}$ and $\mc{L}'$ are probability distributions on $\mb{R}^d$, the \emph{Kolmogorov distance} $\on{d}_{\mr{K}}(\mc{L},\mc{L}')$ is the supremum of
$|\mc{L}(A)-\mc{L}'(A)|$ over all sets $A = (-\infty,a_1]\times \dots\times (-\infty,a_d]$, where $a_1, \dots, a_d \in \mb{R}$.
\end{definition}

\subsection{Additional data from day one}\label{sub:day-one}
We now quickly derive certain coarse degree statistics arising from day one. These results are substantially less delicate than the previous section.
\begin{lemma}\label{lem:coarse-day-one}
There are $C, c > 0$ such that the following holds. Let $n\ge 2$, let $0\le\Delta\le(\log n)^{1/4}$ and let $(\log n)^{-1/4}\le p\le 1-(\log n)^{-1/4}$. For each $x\in\{0,1\}$ we have $\on{d}_{\rm{K}}(\mc{L}_x,\mc{N}(0,I_2))\le n^{-c}$ with probability $1-O(n^{-5})$ under majority dynamics on $\mb{G}(2n+\Delta,p)$ with $|R_0| = n+\Delta$. Furthermore, $\mc{L}_x$ is supported on $[-C\sqrt{\log n},C\sqrt{\log n}]$ with probability $1-O(n^{-5})$. (Hence we can choose $\wh{\mc{L}}_x$ to have the same support.)
\end{lemma}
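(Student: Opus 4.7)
The plan is to handle the support assertion and the Kolmogorov bound separately. For the support, for each vertex $v \in B_0 \cup R_0$ and each $y \in \{0, 1\}$, $\deg_{V_y}(v)$ is a binomial variable with mean $p|V_y|$ and variance $p(1-p)|V_y|$, so Chernoff's inequality yields $|\deg_{V_y}(v) - p|V_y|| \le C\sqrt{p(1-p)n\log n}$ with probability at least $1 - n^{-10}$ for a suitably large absolute constant $C$. After normalizing by $\sqrt{p(1-p)n}$ one obtains a component bound of $C\sqrt{\log n}$, and a union bound over the $O(n)$ pairs $(v, y)$ gives the support claim with probability $1 - O(n^{-5})$.

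For the Kolmogorov bound I would first discretize. Since each coordinate of $\on{deg}^{(1)}(v)$ lives on a lattice of spacing $\Theta(1/\sqrt{p(1-p)n})$, within the box $[-C\sqrt{\log n}, C\sqrt{\log n}]^2$ there are only $n^{O(1)}$ possible threshold pairs $(a_1, a_2)$; by a union bound it suffices to show for each fixed threshold that
\[F(a_1, a_2) := |\{v \in V_x : \on{deg}^{(1)}(v) \le (a_1, a_2)\}|\]
satisfies $|F(a_1, a_2) - |V_x|\Phi(a_1)\Phi(a_2)| \le |V_x| n^{-c'}$ except with probability at most $n^{-C}$ for any desired constant $C$. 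To prove this per-threshold estimate, I would apply \cref{thm:graph-indep} to the subgraph of $G$ induced on $V_x$ and \cref{thm:bipartite-indep} to the bipartite graph between $V_x$ and $V_{1-x}$, transferring the problem, at a multiplicative cost $1 + O(n^{-c})$ and superpolynomially small additive error, to the independent degree models $\mc{I}_p^{|V_x|}$ and $\mc{I}_p^{|V_x|, |V_{1-x}|}$. Conditional on the internal Gaussian draws $p_0', p_2'$ (each within $p \pm O(\sqrt{p(1-p)\log n}/n)$ with superpolynomial certainty), the degree pairs of the vertices in $V_x$ become iid pairs of independent binomials subject only to sum-parity and sum-equality conditions. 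Two-dimensional Berry--Esseen (with error $O(1/\sqrt{p(1-p)n}) = O(n^{-c})$) then yields $\mb{E}[F] = |V_x|\Phi(a_1)\Phi(a_2) + O(|V_x| n^{-c})$, while in the unconditioned model $\mc{B}_p^{|V_x|} \times \mc{B}_p^{|V_x|, |V_{1-x}|}$ the count $F$ is a genuine sum of independent Bernoullis (distinct vertices use distinct coordinates of the $\mc{B}$-samples), so Chernoff gives tail bounds stronger than any polynomial. The parity and equality conditioning events each have probability $\Omega(1/n)$ by local limit theorems for binomials, so restoring the conditioning via Bayes' rule inflates the tail probability by at most a polynomial factor, still comfortably below $n^{-C}$.

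The main technical obstacle is bookkeeping the three compounded approximations --- the McKay--Wormald and McKay--Skerman graph-to-independent transfers (\cref{thm:graph-indep,thm:bipartite-indep}), the restoration of the even-sum and equal-sum conditioning via Bayes' rule, and the Berry--Esseen error per vertex --- while ensuring uniformity over the polynomial-sized grid of thresholds. Each individual layer is either standard (Chernoff, Berry--Esseen, local limit theorems) or imported directly from the cited enumeration theorems, but their interaction must be tracked so that the cumulative error collapses to $O(n^{-c})$ with failure probability $O(n^{-5})$.
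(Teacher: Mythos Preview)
Your proposal is correct and follows essentially the same route as the paper's sketch: a union bound for the support claim, followed by transferring to the independent degree models via \cref{thm:graph-indep,thm:bipartite-indep}, applying Chernoff in the unconditioned $\mc{B}$ model over a polynomial grid of thresholds, and absorbing the parity/equal-sum conditioning as a polynomial factor via Bayes' rule. The only cosmetic difference is that the paper phrases the grid as intervals $pn+\sqrt{p(1-p)n}[x,x+n^{-c}]$ rather than lattice thresholds, but this is the same discretization.
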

\begin{remark}
A version of the above result when $p = 1/2$ and $\Delta = 0$ with a weaker probability bound appears in \cite[Section~4.1]{FKNSS21}, which is also sufficient for our purposes.
\end{remark}
\begin{proof}[Sketch]
It suffices to check it for $x = 0$, as the remaining case is analogous. The support claim is immediate by a union bound over all vertices. The Kolmogorov distance claim follows from the degree models in \cref{sec:one}. Specifically, consider the reduction from the true degree sequence model to the independent degree model, and then note that regardless of the revealed $q_0,q_1,q_2$ in $p\pm 20\sqrt{p(1-p)\log n}/n$, each vertex in $R_0$ has joint degree distribution extremely close to a correctly normalized Gaussian. Everything is now independent, so Chernoff on the number of vertices with degrees $pn+\sqrt{p(1-p)n}[x,x+n^{-c}]$, ranging over a polynomial-sized set of values $x$ proves the desired result. We must divide by the probability that the number of edges within each part is even and that the number of edges in the bipartite part agrees across both sides, as in \cref{sub:evenness,sub:numerator}, but these are polynomial probabilities which do not affect the bound significantly.
\end{proof}

\subsection{Data from day two}\label{sub:day-two}
We are now in position to derive the necessary data for day two. We show that given a substantial lead after day one that this leads grows to a linear size on the following day with high probability. To do this we reveal certain information and condition on certain high probability outcomes.
\begin{enumerate}[{\bfseries{E\arabic{enumi}}}]
\item\label{item:E1} Reveal all $\deg_{B_0}v$ and $\deg_{R_0}v$ values, which is enough to execute day one and determine $R_1,B_1$.
\item\label{item:E2} Furthermore, we assume that this revelation satisfies \cref{lem:coarse-day-one} and we let $|R_0\cap R_1| = x$ and $|B_0\cap B_1| = y$, defining
\begin{align*}
\sqrt{\frac{n}{4\pi}}x' &= x - \bigg(\frac{1}{2}+\frac{p(\Delta-1)+1/2}{2\sqrt{\pi p(1-p)n}}\bigg)n,\\
\sqrt{\frac{n}{4\pi}}y' &= y - \bigg(\frac{1}{2}+\frac{p(-\Delta-1)+1/2}{2\sqrt{\pi p(1-p)n}}\bigg)n.
\end{align*}
as in the statement of \cref{thm:day-one}.
\item\label{item:E3} We may assume that our revelation gave rise to values $x',y' = O(\sqrt{\log n})$ with probability at least $1-n^{-5}$ by \cref{thm:day-one}.
\item\label{item:E4} Finally, the number of edges between the two parts in the initial partition is $pn(n+\Delta)+O(n^{3/2-1/5})$ with super-polynomially high probability, so we may assume that our revelation gave rise to such a number of edges. Similarly within each part, we may assume we have $p\binom{n}{2} + O(n^{3/2-1/5})$ edges.
\end{enumerate}

In order to execute day two, we reveal $\deg_Tv$ for $T\in\{R_0\cap R_1,R_0\cap B_1, B_0\cap R_1, B_0\cap B_1\}$. Depending on the total degree from $v$ to $R_1$ and $B_1$, as well as whether $v\in B_1$ or $R_1$ in the case of ties, we know where $v$ lands in the next step.

\begin{claim}\label{clm:concentration}
There is an absolute $c > 0$ such that the following holds. Given revelations and assumptions \cref{item:E1,item:E2,item:E3,item:E4}, over the remaining randomness for each $x\in\{0,1\}^3$ the number of vertices in $V_x$ is concentrated at a scale $O(n^{1-c})$. In particular, $V_x$ is in an interval of length $O(n^{1-c})$ around its mean with probability at least $1-n^{-c}$.
\end{claim}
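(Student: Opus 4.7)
The plan is to reveal enough additional degree data to make the day-two updates measurable, transfer to an independent degree-sequence model via \cref{thm:graph-indep,thm:bipartite-indep} (in the spirit of \cref{sub:transfer}), and conclude by a second-moment argument in the style of \cite{FKNSS21}.

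Concretely, in addition to \cref{item:E1,item:E2,item:E3,item:E4}, I would reveal for each $v\in B_0\cup R_0$ the four finer degrees $\deg_T v$ with $T$ ranging over $R_0\cap R_1$, $R_0\cap B_1$, $B_0\cap R_1$, $B_0\cap B_1$; these refine $\deg_{R_0}v$ and $\deg_{B_0}v$ (known from \cref{item:E1}) into two summands each. Together with $v$'s day-one color (which resolves ties), the four values determine whether $v\in R_2$ or $v\in B_2$, so $\mathbf{1}[v\in V_x]$ is measurable and $|V_x|=\sum_v\mathbf{1}[v\in V_x]$. Conditional on \cref{item:E1}, the randomness remaining consists of three independent uniformly random graphs with prescribed degree sequences (on $R_0$, on $B_0$, and bipartite between them), and the finer degrees $\deg_T v$ are bipartite/ordinary degree sequences with specified part sums on these subgraphs.

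By \cref{thm:graph-indep,thm:bipartite-indep}, the joint distribution of these finer degree sequences is $(1+O(n^{-c}))$-close (with super-polynomially small additive error on an exceptional set) to the corresponding integrated independent model of \cref{def:integrated-model}; the typicality provided by \cref{lem:coarse-day-one} and \cref{item:E4} places us in the regime where the enumeration applies. In the independent model the finer degrees across distinct vertices are (essentially) independent, each a binomial mixture with parameters close to the relevant part size and edge probability $p$, so the indicators $\mathbf{1}[v\in V_x]$ are independent across $v$, with variance $O(n)$.

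To conclude, I would transfer first and second moments separately. Each agrees with its independent-model counterpart up to multiplicative $1+O(n^{-c})$ error, so with $\mathbb{E}[|V_x|]=O(n)$ the variance in the true model is $O(n^{2-c})$ and Chebyshev gives $|V_x|$ within $O(n^{1-c/3})$ of its mean with probability at least $1-n^{-c/3}$, proving the claim after renaming the absolute constant. The main obstacle is the second-moment transfer: the cited theorems give single-marginal precision, while here one needs the joint event ``$u,v\in V_x$'' to transfer at the same precision. This can be arranged by applying the enumeration to the joint degree sequence at two designated vertices, or equivalently by first integrating over the edge-count parameters as in \cref{sub:transfer} and then decoupling the degrees at $u,v$ within the conditioned $\mc{E}$-models up to $O(1/n)$ corrections; this mirrors the FKNSS21 covariance-control step and is the only place where bookkeeping is delicate.
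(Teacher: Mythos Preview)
Your high-level plan (reveal finer degrees, reduce to a near-independent model, run a second-moment/Chebyshev argument as in \cite{FKNSS21}) matches the paper's. The gap is in the transfer step. After conditioning on \cref{item:E1}, the remaining object is a \emph{uniform random graph with a prescribed degree sequence} (three of them, as you say). \cref{thm:graph-indep,thm:bipartite-indep} compare the degree sequence of $\mb{G}(n,p)$ to the integrated model $\mc{I}$; they say nothing about the law of finer degrees in a degree-constrained uniform random graph. So invoking them ``in the spirit of \cref{sub:transfer}'' at this conditional stage is not justified. Relatedly, even in the integrated model your ``independent binomial'' heuristic breaks: the finer degrees $\deg_{V_{00}}v$ and $\deg_{V_{01}}v$ must sum to the already-revealed $\deg_{R_0}v$, so they cannot be modeled as independent binomials of $v$.

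The paper instead works directly in the degree-constrained model using the enumeration-based formulas of \cref{app:mckay-wormald}. Given \cref{item:E1,item:E2,item:E3,item:E4}, for each $v$ the conditional law of $\rho_{00}$ and $\rho_{10}$ is computed via \cref{prop:graph-balanced,prop:bigraph-balanced} (after using \cref{prop:graph-bounded,prop:bigraph-bounded} for tail control), yielding the explicit discrete-Gaussian formulas \cref{eq:day-two-gaussian-R,eq:day-two-gaussian-B}. The decorrelation needed for the second moment is then obtained not by any $\mc{I}$-model transfer, but by the observation that \emph{revealing the full neighborhood of one vertex $v$} only perturbs the $\beta$-sums entering another vertex $w$'s formula by $O((\log n)^2/\sqrt{n})$, so $\mb{P}[w\in V_x]$ changes by $O(n^{-c})$. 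That is exactly the FKNSS21 covariance-control step you allude to, but carried out inside the degree-constrained model via \cref{prop:graph-expectation,prop:bigraph-expectation}, not via \cref{thm:graph-indep,thm:bipartite-indep}. If you replace your transfer step with these propositions, the rest of your outline goes through.
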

This implies that $|R_2|$ and $|B_2|$ are concentrated.

To do this, we attempt to understand the degree distribution better. First, let
\[\beta_i^R = \frac{\deg_{R_0}v_i-p(n+\Delta-1)}{\sqrt{p(1-p)(n+\Delta-1)}},\qquad\beta_i^B = \frac{\deg_{B_0}v_i-pn}{\sqrt{p(1-p)n}}\]
for $i\in R_0$ and
\[\beta_j^R = \frac{\deg_{R_0}v_j-p(n+\Delta)}{\sqrt{p(1-p)(n+\Delta)}},\qquad\beta_j^B = \frac{\deg_{B_0}v_j-p(n-1)}{\sqrt{p(1-p)(n-1)}}\]
for $j\in B_0$.

Given $v\in R_0\cup B_0$, look at
\[\deg^{(1)}v = (\alpha_0,\alpha_1),\qquad\deg^{(2)} v = (\rho_{00},\rho_{01},\rho_{10},\rho_{11})\]
where $\alpha_i = (\deg_{V_i}v-p|V_i|)/\sqrt{p(1-p)n}$ and $\rho_{ij} = (\deg_{V_{ij}}v-p|V_{ij}|)/\sqrt{p(1-p)n}$. Note that $\alpha_0,\alpha_1$ are determined given the revealed information, and that
\[\rho_{00}+\rho_{01} = \alpha_0,\qquad\rho_{10}+\rho_{11} = \alpha_1\]
hold. Furthermore $\rho_{00}$ and $\rho_{01}$ are independent given the revealed information, and their probability distributions can be determined by \cref{sub:enum-graph-model} and \cref{sub:enum-bigraph-model}, respectively.

By the first parts of \cref{prop:graph-bounded} and \cref{prop:bigraph-bounded}, we see that with super-polynomially high probability the $\rho_{ij}$ are bounded by $(\log n)^{25}/\sqrt{p(1-p)}$. Therefore we may assume that all vertices satisfy such a bound when revealing the new joint distribution of degrees. Furthermore, using the conditions on $x',y'$, in both cases we will be able to apply \cref{prop:graph-balanced} or \cref{prop:bigraph-balanced}, as long as we verify the necessary condition regarding $\sum\beta$ (in the notation of those propositions). Specifically, one needs for $v\in R_0$ that
\[\sum_{i\in R_0\setminus v}\beta_i^R = O(n^{5/6}) = \sum_{j\in B_0}\beta_j^R\]
while for $v\in B_0$ one needs
\[\sum_{i\in R_0}\beta_i^B = O(n^{5/6}) = \sum_{j\in B_0\setminus v}\beta_j^B.\]
This follows since we assumed the number of edges between the two parts in the initial partition is $pn(n+\Delta)+O(n^{3/2-1/5})$, and similar for within each of the two parts.

Now \cref{prop:graph-balanced} and \cref{prop:bigraph-balanced} show for $v\in R_0$ that
\begin{align}
\mb{P}[\rho_{00} = \gamma] &= \frac{\sqrt{2}+O(n^{-1/10})}{\sqrt{\pi p(1-p)n}}\exp\bigg(-\frac{1}{2}\bigg(2\gamma-\alpha_0-\frac{\sum_{i\in V_{00}}\beta_i^R}{n/2}\bigg)^2\bigg),\notag\\
\mb{P}[\rho_{10} = \gamma] &= \frac{\sqrt{2}+O(n^{-1/10})}{\sqrt{\pi p(1-p)n}}\exp\bigg(-\frac{1}{2}\bigg(2\gamma-\alpha_1-\frac{\sum_{j\in V_{10}}\beta_j^R}{n/2}\bigg)^2\bigg),\label{eq:day-two-gaussian-R}
\end{align}
absorbing negligible errors such as the difference of $\Delta$ between the number of vertices of $R_0$ and $B_0$.

Note that if we reveal the neighborhood of $v$, then any $w\in R_0$ will have essentially the same conditional distribution (the effect of revealing this neighborhood is to slightly adjust some degrees, which negligibly affects $\sum_{j\in R_{10}}\beta_j$, for instance).

Using this observation, a second-moment computation demonstrates that the number of vertices $v\in R_0\cap R_1$ with
\begin{equation}\label{eq:switch-condition-2}
\rho_{00} + \rho_{10} - \rho_{01} - \rho_{11}\ge\frac{p}{\sqrt{p(1-p)n}}(|V_{01}|+|V_{11}|-|V_{00}|-|V_{10}|) = \frac{p}{\sqrt{p(1-p)n}}(|B_1|-|R_1|)
\end{equation}
is concentrated (which corresponds to $v\in R_0\cap R_1$ being in $R_2$ after day two is revealed). We forgo the computational details (for similar arguments of this form, see \cite[Section~4.3.6]{FKNSS21}). The other cases are analogous. This completes the justification of \cref{clm:concentration}.

We quickly record that for $v\in B_0$, one obtains instead
\begin{align}
\mb{P}[\rho_{00} = \gamma] &= \frac{\sqrt{2}+O(n^{-1/10})}{\sqrt{\pi p(1-p)n}}\exp\bigg(-\frac{1}{2}\bigg(2\gamma-\alpha_0-\frac{\sum_{i\in V_{00}}\beta_i^B}{n/2}\bigg)^2\bigg),\notag\\
\mb{P}[\rho_{10} = \gamma] &= \frac{\sqrt{2}+O(n^{-1/10})}{\sqrt{\pi p(1-p)n}}\exp\bigg(-\frac{1}{2}\bigg(2\gamma-\alpha_1-\frac{\sum_{j\in V_{10}}\beta_j^B}{n/2}\bigg)^2\bigg).\label{eq:day-two-gaussian-B}
\end{align}

\begin{claim}\label{clm:expectation}
There is an absolute $c > 0$ such that the following holds. Given revelations and assumptions \cref{item:E1,item:E2,item:E3,item:E4}, over the remaining randomness we have
\[\mb{E}|R_0\cap R_2| = n\mb{P}_{Z\sim\mc{N}(0,2)}\bigg[Z\ge\frac{2}{\sqrt{\pi}} + \sqrt{\frac{p}{(1-p)n}}(|B_1|-|R_1|)\bigg] + O(n^{1-c})\]
and
\[\mb{E}|B_0\cap R_2| = n\mb{P}_{Z\sim\mc{N}(0,2)}\bigg[Z\ge -\frac{2}{\sqrt{\pi}} + \sqrt{\frac{p}{(1-p)n}}(|B_1|-|R_1|)\bigg] + O(n^{1-c}),\]
while also $\mb{E}|V_{x_1x_2x_3}| = \mb{E}|V_{x_1x_2'x_3}| + O(n^{1-c})$ for all $x_1,x_1',x_2,x_3\in\{0,1\}$.
\end{claim}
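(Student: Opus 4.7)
The plan is to compute $\mb{P}[v\in R_2\mid\text{E1--E4}]$ for each $v\in R_0$ (and separately $v\in B_0$) using the Gaussian local-limit formulas \eqref{eq:day-two-gaussian-R} and \eqref{eq:day-two-gaussian-B}, and then sum over $v$ after establishing concentration of the $\beta$-sums appearing in the conditional means.

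Fix $v\in R_0$. Since $\rho_{00}+\rho_{01}=\alpha_0$ and $\rho_{10}+\rho_{11}=\alpha_1$ are determined by E1, the switching quantity simplifies to $\rho_{00}+\rho_{10}-\rho_{01}-\rho_{11}=2(\rho_{00}+\rho_{10})-(\alpha_0+\alpha_1)$. By \eqref{eq:day-two-gaussian-R} and independence of $\rho_{00}$ (edges within $R_0$) from $\rho_{10}$ (bipartite edges to $B_0$), this quantity is conditionally distributed as $\mc{N}(m_R,2)$, where
\[
m_R \;=\; \frac{2(S_{00}^R+S_{10}^R)}{n},\qquad S_{ij}^R \;:=\; \sum_{u\in V_{ij}}\beta_u^R.
\]
Crucially $m_R$ depends only on the revelations, not on $v$'s own degrees; in particular it takes the same value for every $v\in R_0$, regardless of whether $v\in R_1$ or $v\in B_1$. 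Setting $t:=\sqrt{p/((1-p)n)}(|B_1|-|R_1|)$, the switching event becomes $\{\mc{N}(m_R,2)\ge t\}$ (the strict vs.\ non-strict distinction from \cref{def:majority-dynamics} contributes only $O(n^{-c})$ by continuity), so $\mb{P}[v\in R_2\mid\text{E1--E4}] = \mb{P}[\mc{N}(m_R,2)\ge t]+O(n^{-c})$.

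Next, \cref{lem:chop-statistics} gives $\mb{E}[\beta_u^R\mid u\in V_{00}]=1/\sqrt{\pi}+O(n^{-1/4})$ (taking $X=\deg_{R_0}(u),Y=\deg_{B_0}(u)$ and using $X^+$), and the analogous statement holds for $u\in V_{10}$. Combined with Hoeffding-type concentration, justified by the polylog uniform bound $|\beta_u^R|\le(\log n)^{O(1)}$ afforded by \cref{lem:coarse-day-one}, one concludes that $S_{00}^R+S_{10}^R=|R_1|/\sqrt{\pi}+O(n^{1-c})$ deterministically given the revelations, so $m_R$ is an explicit constant up to $O(n^{-c})$. Multiplying by $|R_0|=n+O((\log n)^{1/4})$ delivers the claimed formula for $\mb{E}|R_0\cap R_2|$; the bound on $\mb{E}|B_0\cap R_2|$ is proved identically using \eqref{eq:day-two-gaussian-B}, $\beta^B$ in place of $\beta^R$, and the $X^-$ half of \cref{lem:chop-statistics} (since $\deg_{B_0}(u)$ is now the smaller binomial for $u\in R_1$), which flips the sign of the constant shift.

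Finally, for the statement $\mb{E}|V_{x_1x_2x_3}|=\mb{E}|V_{x_1x_2'x_3}|+O(n^{1-c})$, note that $p_R:=\mb{P}[v\in R_2\mid v\in R_0,\text{E1--E4}]$ (and the analogous $p_B$) is independent of $v$'s day-one state by the first paragraph, whence $\mb{E}|V_{000}|=|V_{00}|\cdot p_R$ and $\mb{E}|V_{010}|=|V_{01}|\cdot p_R$ with the same $p_R$, while $\abs{|V_{00}|-|V_{01}|}=O(\sqrt{n\log n})=O(n^{1-c})$ follows from E3 and \cref{thm:day-one}; the other quadruples $(x_1,x_2,x_2',x_3)$ are symmetric. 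The main obstacle is the deterministic concentration of $S_{00}^R+S_{10}^R$ (and its $B$-analog): while \cref{lem:chop-statistics} gives the conditional expectations cleanly, converting them into a pointwise estimate over the revealed configuration with $o(n)$ error relies essentially on the polylog uniform bound from \cref{lem:coarse-day-one}.
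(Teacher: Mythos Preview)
Your overall approach matches the paper's: reduce the switching condition to $(2\rho_{00}-\alpha_0)+(2\rho_{10}-\alpha_1)$, recognize this as (approximately) $\mc{N}(m_R,2)$ with $m_R$ independent of the particular vertex $v$, and then evaluate $m_R$ by computing $\sum_{i\in R_1}\beta_i^R$. The second part of the claim is also handled exactly as in the paper.

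There is, however, a genuine gap in how you evaluate the sum $S_{00}^R+S_{10}^R$. After \cref{item:E1} the degrees are \emph{revealed}, so this sum is a deterministic number; invoking ``Hoeffding-type concentration'' for a deterministic quantity does not make sense as written. Moreover, \cref{lem:chop-statistics} is a statement about genuinely independent binomials $X\sim\on{Bin}(n+\tau,q)$, $Y\sim\on{Bin}(n,q')$, whereas $\deg_{R_0}(u),\deg_{B_0}(u)$ are degrees in an actual random graph and are neither independent binomials nor independent of one another across vertices $u$. What is actually available to you under \cref{item:E2} is the conclusion of \cref{lem:coarse-day-one}: the empirical joint law $\mc{L}_x$ of $(\beta_u^R,\beta_u^B)$ is $n^{-c}$-close in Kolmogorov distance to $\mc{N}(0,I_2)$ and is supported on a polylog box. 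From this (and only this) one reads off directly that the empirical average of $\beta_u^R$ over $R_1=\{u:\beta_u^R\gtrsim\beta_u^B\}$ equals $\mb{E}[Z_1\mid Z_1\ge Z_2]=1/\sqrt\pi$ up to $O(n^{-c})$, which is precisely the paper's route. Your appeal to \cref{lem:chop-statistics} happens to land on the same constant $1/\sqrt\pi$, but the justification should go through the Kolmogorov-distance hypothesis in \cref{item:E2} rather than through a conditional-expectation/Hoeffding argument in a probability space that has already been collapsed by the revelation. (Alternatively you could add the desired concentration event explicitly to \cref{item:E1}--\cref{item:E4}, but then you must argue it has probability $1-O(n^{-c})$ via the degree-model transfer of \cref{thm:graph-indep,thm:bipartite-indep}; you do neither.)

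One minor note: your sign bookkeeping for the $B_0$ case is correct --- the relevant $\beta^B$-average over $R_1$ is $-1/\sqrt\pi$, yielding the $-2/\sqrt\pi$ in the second displayed formula.
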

For this we note that if $v$ has parameters $(\alpha_0,\alpha_1)$ defined above then
\[\rho_{00}+\rho_{10}-\rho_{01}-\rho_{11} = (2\rho_{00}-\alpha_0) + (2\rho_{10}-\alpha_1),\]
which is the sum of two independent discrete Gaussians of standard deviation $1$ and discretization $2/\sqrt{p(1-p)n}$ by \cref{eq:day-two-gaussian-R,eq:day-two-gaussian-B}. A simple computation shows the sum of two such discrete Gaussians with the given error terms (and tail bounds) is a corresponding discrete Gaussian. In particular, we see for $v\in R_0$ that
\[\mb{P}[(2\rho_{00}-\alpha_0) + (2\rho_{10}-\alpha_1) = \tau] = \frac{1}{\sqrt{p(1-p)n}}\frac{1}{\sqrt{4\pi}}\exp\bigg(-\frac{1}{4}\bigg(\tau-\frac{\sum_{i\in V_{00}\cup V_{10}}\beta_i^R}{n/2}\bigg)^2\bigg) + O(n^{-1/2-1/12})\]
for $\tau$ on an appropriate integer lattice of discretization $2/\sqrt{p(1-p)n}$. A similar formula with $\beta_i^B$ holds for $v\in B_0$. For any $v\in R_0$ we see that
\[\mb{P}[\cref{eq:switch-condition-2}\text{ for }v] = \mb{P}_{Z\sim\mc{N}(0,2)}\bigg[Z\ge\frac{\sum_{i\in V_{00}\cup V_{10}}\beta_i^R}{n/2} + \sqrt{\frac{p}{(1-p)n}}(|B_1|-|R_1|)\bigg]+O(n^{-1/13})\]
hence
\[\mb{E}|R_0\cap R_2| = n\mb{P}_{Z\sim\mc{N}(0,2)}\bigg[Z\ge\frac{\sum_{i\in V_{00}\cup V_{10}}\beta_i^R}{n/2} + \sqrt{\frac{p}{(1-p)n}}(|B_1|-|R_1|)\bigg] + O(n^{12/13}).\]
Here the error term comes from the earlier term, as well as the possibility of vertices that are exactly balanced (of which there are few by the given computations) which may go a different way depending on its day one (not day zero) affiliation.

Similarly, for any $v\in B_0$ we have
\[\mb{P}[\cref{eq:switch-condition-2}\text{ for }v] = \mb{P}_{Z\sim\mc{N}(0,2)}\bigg[Z\ge\frac{\sum_{i\in V_{00}\cup V_{10}}\beta_i^B}{n/2} + \sqrt{\frac{p}{(1-p)n}}(|B_1|-|R_1|)\bigg]\]
and thus
\[\mb{E}|B_0\cap R_2| = n\mb{P}_{Z\sim\mc{N}(0,2)}\bigg[Z\ge\frac{\sum_{i\in V_{00}\cup V_{10}}\beta_i^B}{n/2} + \sqrt{\frac{p}{(1-p)n}}(|B_1|-|R_1|)\bigg] + O(n^{12/13}).\]
Finally, it suffices to compute the average of these $\beta_i^R$ and $\beta_i^B$ quantities over $V_{00}\cup V_{10} = R_1$. From \cref{lem:coarse-day-one} we know that the empirical normalized joint degree distributions for $R_0$ and $B_0$ are close to $\mc{N}(0,I_2)$. Therefore the degree distribution of $R_1$ is close to that of $(Z_1,Z_2)\sim\mc{N}(0,I_2)$ conditional on $Z_1\ge Z_2$ (where $Z_1$ corresponds to the parameter $(\deg_{R_0}v-p(n+\Delta))/\sqrt{p(1-p)(n+\Delta)}$). Therefore the expected value of this ensemble is within $O(n^{-c})$ of
\[\mb{E}[Z_1|Z_1\ge Z_2] = \frac{1}{\sqrt{\pi}}.\]
This shows
\[\frac{\sum_{i\in V_{00}\cup V_{10}}\beta_i^R}{n} = \frac{1}{\sqrt{\pi}} + O(n^{-c}) = \frac{\sum_{i\in V_{00}\cup V_{10}}\beta_i^B}{n}.\]
Then the first part of \cref{clm:expectation} follows.

The second part of \cref{clm:expectation} follows from that fact that all of the expressions for probabilities above based on $v$ are independent of the values $(\alpha_0,\alpha_1)$, so we can sum over $v\in R_0\cap R_1$, for instance, by just summing over those $v\in R_0$ which have $\alpha_0 > \alpha_1$, of which there are $n/2 + O(\sqrt{n}(\log n))$ by \cref{item:E2}.

Putting \cref{clm:concentration,clm:expectation} together and simplifying the sum of the expectations in \cref{clm:expectation}, we obtain the following information about the distribution of the sizes after day two.
\begin{theorem}\label{thm:day-two}
There is an absolute $c > 0$ such that the following holds. Given revelations and assumptions \cref{item:E1,item:E2,item:E3,item:E4}, over the remaining randomness we have
\[|R_2| = n+n\int_{-\eta}^\eta\frac{1}{\sqrt{2\pi}}\exp\bigg(-\frac{1}{2}\Big(u-\sqrt{\frac{2}{\pi}}\bigg)^2\Big)du + O(n^{1-c})\]
with probability at least $1-n^{-c}$, where $\eta = \sqrt{p/(2(1-p)n)}(|R_1|-|B_1|)$.
\end{theorem}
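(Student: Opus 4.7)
The plan is to deduce the theorem as a direct consequence of \cref{clm:concentration,clm:expectation}, followed by an elementary manipulation of Gaussian probabilities.

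First, I observe that $|R_2| = |R_0\cap R_2| + |B_0 \cap R_2|$, equivalently the sum of $|V_x|$ over $x \in \{0,1\}^3$ with last coordinate equal to $0$. By \cref{clm:concentration} applied to each such $x$ and a union bound, we have
\[|R_2| = \mb{E}|R_0 \cap R_2| + \mb{E}|B_0 \cap R_2| + O(n^{1-c})\]
with probability at least $1 - n^{-c}$ (after slightly shrinking $c$).

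Next, I apply \cref{clm:expectation} to each expectation and add the two. Writing $Z = \sqrt{2}\, Y$ with $Y \sim \mc{N}(0,1)$, the two thresholds rescale to $Y \ge \sqrt{2/\pi} - \eta$ and $Y \ge -\sqrt{2/\pi} - \eta$ respectively, since $\sqrt{p/((1-p)n)}(|B_1|-|R_1|) = -\sqrt{2}\,\eta$ under the stated definition of $\eta$. Using that $-Y \sim \mc{N}(0,1)$ transforms the second summand into $\mb{P}[Y \le \sqrt{2/\pi} + \eta]$, so the sum equals
\[\mb{P}[Y \ge \sqrt{2/\pi} - \eta] + \mb{P}[Y \le \sqrt{2/\pi} + \eta] = 1 + \mb{P}[\sqrt{2/\pi} - \eta \le Y \le \sqrt{2/\pi} + \eta].\]
Writing this interval probability as an integral of the standard Gaussian pdf $\phi$, substituting $y = u + \sqrt{2/\pi}$, and then reflecting $u \mapsto -u$ (using that $\phi$ is even) rewrites it as $\int_{-\eta}^{\eta} \phi(u - \sqrt{2/\pi})\, du$. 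Multiplying through by $n$ yields the claimed closed form.

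There is essentially no obstacle beyond bookkeeping: all the substantive work is already contained in \cref{clm:concentration,clm:expectation}. The theorem merely combines their outputs and rewrites a sum of two Gaussian tail probabilities as a single truncated, shifted Gaussian integral around $\sqrt{2/\pi}$.
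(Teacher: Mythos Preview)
Your proposal is correct and follows essentially the same approach as the paper, which simply states that the theorem follows by ``putting \cref{clm:concentration,clm:expectation} together and simplifying the sum of the expectations.'' Your Gaussian manipulation (rescaling $Z=\sqrt{2}\,Y$, using symmetry to combine the two tails into a single interval probability, then shifting and reflecting to match the stated integrand) spells out precisely that simplification.
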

\begin{remark}
The integral is a signed integral. In particular its sign is the same as $\eta$.
\end{remark}

\subsection{Finishing on day three}\label{sub:day-three}
To finish we now use rather coarse consequences of degree enumeration to prove that every vertex is of the appropriate color. Note that \cref{thm:day-one} tells us the distribution of the lead $|R_1|-|B_1|$, and \cref{thm:day-two} tells us, in terms of the lead after day one, what the lead after day two is concentrated at. Furthermore, if the lead at day one is sufficiently positive then so will be the lead at day two with high probability. For example, for $p\le 1/2$ a lead of $\sqrt{n}$ yields a lead of $\Omega(n\sqrt{p})$.

Using arguments in \cite{BCOTT16,TV20,BD20} one can immediately prove that the side leading after day two has colored all the vertices in two further days. This along with \cref{thm:day-one,thm:day-two} will immediately justify \cref{thm:main} except that we can only guarantee it ends by day four. The arguments in \cite{BCOTT16,TV20,BD20} appear not sufficiently refined to deliver the day three result.
\begin{proof}[Proof of \cref{thm:main}]
Make revelations as in \cref{item:E1,item:E2,item:E3,item:E4}. Then reveal the information $\deg^{(2)}v$ for all $v\in R_0\cup B_0$, which allows us to determine the parts up to the end of day two. Reveal such that \cref{clm:concentration,clm:expectation,thm:day-two} are satisfied. Let
\[\eta = \sqrt{\frac{p}{2(1-p)n}}(|R_1|-|B_1|)\]
and note
\[|R_2|-|B_2| = 2n\int_{-\eta}^\eta\frac{1}{\sqrt{2\pi}}\exp\bigg(-\frac{1}{2}\Big(u-\sqrt{\frac{2}{\pi}}\bigg)^2\Big)du + O(n^{1-c})\]
from \cref{thm:day-two}, for some small absolute constant $c\in(0,1/4)$.

We wish to show that over all of the randomness (including the revealed randomness), if $\eta > 0$ then red will win in $3$ days while if $\eta < 0$ then red will win in $3$ days.

First if $\eta\ge 10+\sqrt{2\log p}$ then $|B_2|\le pn/5$. We see that red wins after day three with extremely high probability since the initial graph has minimum degree at least $pn/2$ with probability at least $1-O(\exp(-\Omega(pn)))$, and this forces every vertex to have more neighbors on the red side than blue side after day two is finished.

Similarly, if $-\eta\ge 10+\sqrt{2\log p}$ then blue wins after day three with extremely high probability.

The case $|\eta|\le n^{-c/2}$ occurs with probability $O(n^{-c/4})$ by the local limit theorem of \cref{thm:day-one}, so we ignore it.

Finally, without loss of generality we consider the case $n^{-c/2} < \eta < 10 + \sqrt{2\log p}$ (the opposite case being analogous except with red and blue switched).

Now to determine what happens on day three, we reveal $\deg^{(3)}v$ for all $v\in R_0\cup B_0$. This comes from another ensemble of degree-constrained distributions, so we apply the results of \cref{app:mckay-wormald} again, between all pairs of the four parts $V_x$ for $x\in\{0,1\}^2$. First, by \cref{clm:concentration,clm:expectation} we have
\[|V_x|\ge n\mb{P}_{Z\sim\mc{N}(0,2)}[Z\ge\frac{2}{\sqrt{\pi}}+\eta\sqrt{2}]+O(n^{1-c})\ge p^2n\ge n/(\log n)^{1/8}\]
for all $x\in\{0,1\}^3$. Thus we are in position to apply \cref{prop:graph-bounded,prop:bigraph-bounded} (as this guarantees the condition on the parameter $h$).

We need to check that the values $\beta$ are indeed of size $O((\log n)^2)$. This follows from the results of \cref{sub:day-two}; recall that we showed the $\rho_{ij}$ corresponding to each $v$ was bounded by $(\log n)^{25}/\sqrt{p(1-p)}$ with super-polynomially high probability, and otherwise there was an exact formula which guarantees the appropriate boundedness with super-polynomially high probability.

Now \cref{prop:graph-bounded,prop:bigraph-bounded} show that with super-polynomially high probability, for all $v\in R_0\cup B_0$ and $x\in\{0,1\}^3$ we have
\[\deg_{V_x}v = p|V_x| + O(n^{1/2}(\log n)^{25}).\]
Therefore
\[\deg_{R_2}v - \deg_{B_2}v = p(|R_2|-|B_2|) + O(n^{1/2}(\log n)^{25})\ge n^{1-c/3}\]
for all $v\in R_0\cup B_0$ with high probability. Thus every vertex will be on the red side after day three, as desired.

We have shown that with high probability (namely, as long as \cref{item:E1,item:E2,item:E3,item:E4} hold and $|\eta| > n^{-c/2}$, and over the randomness of certain degree revelations over three days), some color has the lead after the first day and it wins in three days. This probability is in fact polynomially good.

Finally, \cref{thm:day-one} tells us the probability that $\eta > 0$ to a high degree of accuracy, and simple computation with normal distributions shows it is
\[\mb{P}_{Z\sim\mc{N}(0,1)}\bigg[Z\le\frac{p\Delta\sqrt{2}}{\sqrt{\pi p(1-p)}}\bigg] + O(n^{-c})\]
if $c > 0$ is a small enough absolute constant. We are done.
\end{proof}

\bibliographystyle{amsplain0.bst}
\bibliography{main.bib}

\providecommand{\bysame}{\leavevmode\hbox to3em{\hrulefill}\thinspace}
\providecommand{\MR}{\relax\ifhmode\unskip\space\fi MR }
\providecommand{\MRhref}[2]{%
  \href{http://www.ams.org/mathscinet-getitem?mr=#1}{#2}
}
\providecommand{\href}[2]{#2}
\begin{thebibliography}{10}

\bibitem{Ben73}
Edward~A. Bender, \emph{Central and local limit theorems applied to asymptotic
  enumeration}, J. Combinatorial Theory Ser. A \textbf{15} (1973), 91--111.

\bibitem{BCOTT16}
Itai Benjamini, Siu-On Chan, Ryan O’Donnell, Omer Tamuz, and Li-Yang Tan,
  \emph{Convergence, unanimity and disagreement in majority dynamics on
  unimodular graphs and random graphs}, Stochastic Processes and their
  Applications \textbf{126} (2016), 2719--2733.

\bibitem{BD20}
Ross Berkowitz and Pat Devlin, \emph{Central limit theorem for majority
  dynamics: Bribing three voters suffices}, arXiv:2010.08172.

\bibitem{Bol84}
E.~Bolthausen, \emph{An estimate of the remainder in a combinatorial central
  limit theorem}, Z. Wahrsch. Verw. Gebiete \textbf{66} (1984), 379--386.

\bibitem{CGM08}
E.~R. Canfield, C.~Greenhill, and B.~D. McKay, \emph{Asymptotic enumeration of
  dense 0-1 matrices with specified line sums}, J. Combin. Theory Ser. A
  \textbf{115} (2008), 32--66.

\bibitem{Can80}
E.~Rodney Canfield, \emph{Application of the {B}erry-{E}ss\'{e}en inequality to
  combinatorial estimates}, J. Combin. Theory Ser. A \textbf{28} (1980),
  17--25.

\bibitem{CH56}
Dorwin Cartwright and Frank Harary, \emph{Structural balance: a generalization
  of {H}eider's theory.}, Psychological Review \textbf{63} (1956), 277.

\bibitem{CKLT21}
Debsoumya Chakraborti, Jeong~Han Kim, Joonkyung Lee, and Tuan Tran,
  \emph{Majority dynamics on sparse random graphs}, arXiv:2105.12709.

\bibitem{FKNSS21}
Asaf Ferber, Matthew Kwan, Bhargav Narayanan, Ashwin Sah, and Mehtaab Sawhney,
  \emph{Friendly bisections of random graphs}, to appear.

\bibitem{FKM20}
Nikolaos Fountoulakis, Mihyun Kang, and Tamás Makai, \emph{Resolution of a
  conjecture on majority dynamics: Rapid stabilization in dense random graphs},
  Random Structures \& Algorithms \textbf{57} (2020), 1134--1156.

\bibitem{HKS19}
Diana Halikias, Bo'az Klartag, and Boaz~A Slomka, \emph{Discrete variants of
  brunn-minkowski type inequalities}, arXiv:1911.04392.

\bibitem{JLR00}
S.~Janson, T.~{\L}uczak, and A.~Rucinski, \emph{Random graphs},
  Wiley-Interscience Series in Discrete Mathematics and Optimization,
  Wiley-Interscience, New York, 2000.

\bibitem{KL19}
Bo'az Klartag and Joseph Lehec, \emph{Poisson processes and a log-concave
  {B}ernstein theorem}, Studia Math. \textbf{247} (2019), 85--107.

\bibitem{KST19}
M.~Kwan, B.~Sudakov, and T.~Tran, \emph{Anticoncentration for subgraph
  statistics}, J. Lond. Math. Soc. \textbf{99} (2019), 757--777.

\bibitem{LW20}
A.~Liebenau and N.~Wormald, \emph{Asymptotic enumeration of digraphs and
  bipartite graphs by degree sequence}, arXiv:2006.15797.

\bibitem{LW17}
A.~Liebenau and N.~Wormald, \emph{Asymptotic enumeration of graphs by degree
  sequence, and the degree sequence of a random graph}, arXiv:1702.08373.

\bibitem{MP43}
Warren~S McCulloch and Walter Pitts, \emph{A logical calculus of the ideas
  immanent in nervous activity}, The Bulletin of Mathematical Biophysics
  \textbf{5} (1943), 115--133.

\bibitem{MS16}
Brendan~D. McKay and Fiona Skerman, \emph{Degree sequences of random digraphs
  and bipartite graphs}, J. Comb. \textbf{7} (2016), 21--49.

\bibitem{MW90}
Brendan~D. McKay and Nicholas~C. Wormald, \emph{Asymptotic enumeration by
  degree sequence of graphs of high degree}, European J. Combin. \textbf{11}
  (1990), 565--580.

\bibitem{MW97}
Brendan~D. McKay and Nicholas~C. Wormald, \emph{The degree sequence of a random
  graph. {I}. {T}he models}, Random Structures Algorithms \textbf{11} (1997),
  97--117.

\bibitem{MNT14}
Elchanan Mossel, Joe Neeman, and Omer Tamuz, \emph{Majority dynamics and
  aggregation of information in social networks}, Autonomous Agents and
  Multi-Agent Systems \textbf{28} (2014), 408--429.

\bibitem{MT17}
Elchanan Mossel and Omer Tamuz, \emph{Opinion exchange dynamics}, Probab. Surv.
  \textbf{14} (2017), 155--204.

\bibitem{TV20}
Linh Tran and Van Vu, \emph{Reaching a consensus on random networks: The power
  of few}, Approximation, Randomization, and Combinatorial Optimization.
  Algorithms and Techniques (APPROX/RANDOM 2020), Schloss
  Dagstuhl-Leibniz-Zentrum f{\"u}r Informatik, 2020.

\bibitem{Ver18}
R.~Vershynin, \emph{High-dimensional probability}, Cambridge Series in
  Statistical and Probabilistic Mathematics, vol.~47, Cambridge University
  Press, Cambridge, 2018, An introduction with applications in data science,
  With a foreword by Sara van de Geer.

\end{thebibliography}

\appendix
\section{Miscellaneous calculations}\label{app:calculations}
We now include a proof of \cref{lem:chop-probability}.
\begin{proof}[Proof of \cref{lem:chop-probability}]
We have
\begin{align*}
&\mb{P}[\on{Bin}(n+\tau,q)\ge\on{Bin}(n,q')]\\
&= \sum_{i=0}^n\sum_{j = i}^{n+\tau}\binom{n+\tau}{j}\bigg(p+\frac{\alpha}{n}\bigg)^j\bigg(1-p-\frac{\alpha}{n}\bigg)^{n+\tau-j}\binom{n}{i}\bigg(p+\frac{\beta}{n}\bigg)^i\bigg(1-p-\frac{\beta}{n}\bigg)^{n-i}\\
&= \sum_{i=0}^n\sum_{j = i}^{n+\tau}\binom{n+\tau}{j}p^j(1-p)^{n+\tau-j}\binom{n}{i}p^i(1-p)^{n-i}\\
&\qquad\qquad\qquad\times\bigg(1+\frac{\alpha}{pn}\bigg)^j\bigg(1-\frac{\alpha}{(1-p)n}\bigg)^{n+\tau-j}\bigg(1+\frac{\beta}{pn}\bigg)^i\bigg(1-\frac{\beta}{(1-p)n}\bigg)^{n-i}\\
&= \sum_{i=0}^n\sum_{j = i}^{n+\tau}\binom{n+\tau}{j}p^j(1-p)^{n+\tau-j}\binom{n}{i}p^i(1-p)^{n-i}\\
&\qquad\qquad\qquad\times\bigg(1+O\bigg(\frac{(\log n)^2}{n}\bigg)\bigg)\exp\bigg(\frac{j\alpha}{pn} - \frac{(n+\tau-j)\alpha}{(1-p)n} + \frac{i\beta}{pn} - \frac{(n-i)\beta}{(1-p)n}\bigg).
\end{align*}
The last line comes from the bounds on $\alpha,\beta,p$. Note that terms where $|i-pn|\ge C\sqrt{p(1-p)n\log n}$ or $|j-pn|\ge C\sqrt{p(1-p)n\log n}$ can contribute at most $O(1/n)$ to the total mass, if $C$ is a large enough constant. Let $i = pn + x\sqrt{p(1-p)n}$ and $j = pn + y\sqrt{p(1-p)n}$ where $x,y$ range over appropriate values of magnitude at most $C\sqrt{\log n}$. The exponential term in the above can be expanded if $i,j$ have the above bounded criterion, and otherwise the total contribution is negligible anyway. We thus find that the above equals
\begin{align*}
&\sum_{i=0}^n\sum_{j=i}^{n+\tau}\binom{n+\tau}{j}p^j(1-p)^{n+\tau-j}\binom{n}{i}p^i(1-p)^{n-i} + O(n^{-1}(\log n)^2)\\
&+ \sum_{i=0}^n\sum_{j=i}^{n+\tau}\bigg(\frac{j\alpha}{pn} - \frac{(n+\tau-j)\alpha}{(1-p)n} + \frac{i\beta}{pn} - \frac{(n-i)\beta}{(1-p)n}\bigg)\binom{n+\tau}{j}p^j(1-p)^{n+\tau-j}\binom{n}{i}p^i(1-p)^{n-i}\\
&= \mb{P}[\on{Bin}(n+\tau,p)\ge\on{Bin}(n,p)] + O(n^{-1}(\log n)^2)\\
&\qquad\qquad+ \sum_x\sum_{y\ge x}\bigg(\frac{y\alpha+x\beta}{\sqrt{p(1-p)n}}\bigg)\binom{n+\tau}{j}p^j(1-p)^{n+\tau-j}\binom{n}{i}p^i(1-p)^{n-i}\\
&= \mb{P}[\on{Bin}(n+\tau,p)\ge\on{Bin}(n,p)] + \frac{1}{\sqrt{p(1-p)n}}\int_{u=-\infty}^\infty\int_{v=u}^\infty (v\alpha+u\beta)\frac{1}{2\pi}\exp(-(u^2+v^2)/2)dudv\\
&\qquad\qquad+ O((\log n)^2n^{-1})\\
&= \mb{P}[\on{Bin}(n+\tau,p)\ge\on{Bin}(n,p)] + \frac{\mb{E}_{Z_1,Z_2\sim\mc{N}(0,1)}[\alpha Z_1 + \beta Z_2|Z_1\ge Z_2]}{2\sqrt{p(1-p)n}} + O((\log n)^2n^{-1})\\
&= \mb{P}[\on{Bin}(n+\tau,p)\ge\on{Bin}(n,p)] + \frac{\alpha-\beta}{2\sqrt{\pi p(1-p)n}} + O((\log n)^2n^{-1})
\end{align*}
In the second line we dropped the $\tau$ term as $(\tau\alpha)/((1-p)n)$ is negligible, and in the third line we used the multidimensional Berry-Esseen theorem. For the final line, we used
\[\mb{E}_{Z_1,Z_2\sim\mc{N}(0,1)}[Z_1-Z_2|Z_1\ge Z_2] = \mb{E}_{G\sim\mc{N}(0,2)}|G| = \frac{2}{\sqrt{\pi}}\]
and $\mb{E}[Z_1+Z_2|Z_1\ge Z_2] = \mb{E}[Z_1+Z_2] = 0$.

Now let $m = \min(n+\tau,n)$. We compute by a local central limit theorem for $|k|\le|\tau|$ that for two independent binomial samples,
\begin{align*}
\mb{P}_{X,X'\sim\on{Bin}(m,p)}[X' = X + k] &= \mb{P}_{X,X'}[X'-X=k] \\
&= (1+O(n^{-1/4}))\frac{1}{\sqrt{4\pi p(1-p)m}}\exp\bigg(-\frac{k^2}{4p(1-p)m}\bigg)\\
&= \frac{1}{\sqrt{4\pi p(1-p)n}} + O(n^{-3/4}).
\end{align*}
Thus if $0 < k\le|\tau|$ we have by symmetry that
\[2\mb{P}_{X,X'\sim\on{Bin}(m,p)}[X'\ge X + k] + (2k-1)\bigg(\frac{1}{\sqrt{4\pi p(1-p)n}} + O(n^{-3/4})\bigg) = 1\]
hence
\[\mb{P}_{X,X'\sim\on{Bin}(m,p)}[X'\ge X + k] = \frac{1}{2} - \frac{2k-1}{4\sqrt{\pi p(1-p)n}} + O(n^{-3/4})\]
for all $0 < k\le|\tau|$. In fact this holds for $|k|\le|\tau|$ by a similar argument. Therefore if $\tau\ge 0$ we have
\begin{align*}
\mb{P}[\on{Bin}(n+\tau,p)\ge\on{Bin}(n,p)] &= \mb{E}_{k\sim\on{Bin}(\tau,p)}[\mb{P}_{X,X'\sim\on{Bin}(m,p)}[X'\ge X - k]]\\ &= \mb{E}_{k\sim\on{Bin}(\tau,p)}\bigg[\frac{1}{2} + \frac{2k+1}{4\sqrt{\pi p(1-p)n}} + O(n^{-3/4})\bigg]\\
&= \frac{1}{2} + \frac{2p\tau+1}{4\sqrt{\pi p(1-p)n}} + O(n^{-3/4}).
\end{align*}
A similar argument shows the same formula for $\tau < 0$.

Putting it all together, we see
\[\mb{P}[\on{Bin}(n+\tau,q)\ge\on{Bin}(n,q')] = \frac{1}{2}+\frac{p\tau + 1/2 + \alpha - \beta}{2\sqrt{\pi p(1-p)n}} + O(n^{-3/4}).\qedhere\]
\end{proof}

\section{Probabilities in degree-constrained models}\label{app:mckay-wormald}
We will first require slight modifications of \cite[Propositions~A.1,~A.6]{FKNSS21}.

\subsection{Graph model}\label{sub:enum-graph-model}
First we compute the probability of having certain neighborhood sizes in a degree-constrained model of graphs. This result follows from a delicate but straightforward argument that utilizes graph enumeration results from \cite{MW90} (the bipartite model, discussed later, utilizes bipartite graph enumeration results from \cite{CGM08}). We state those results precisely in \cref{sub:enum-computations}.
\begin{proposition}\label{prop:graph-expectation}
There are $\epsilon_{\ref{prop:graph-expectation}}, C_{\ref{prop:graph-expectation}} > 0$ so the following holds. Let $n\ge C_{\ref{prop:graph-expectation}}$ and $(\log n)^{-1/4}\le p\le 1-(\log n)^{-1/4}$. Let $\mbf{d}\in E_n$ such that each $d_w=pn+O(\sqrt{p(1-p)}n^{1/2+\epsilon_{\ref{prop:graph-expectation}}})$. Let $G$ be a uniformly random graph on vertex set $W = \{v_1,\ldots,v_n\}$ with this degree sequence. Consider a size $h$ subset $V\subseteq W$ satisfying $\min(h,n-h)\ge n/(\log n)^{1/8}$, and an integer $t\in[0,d_n]$. For $w\in W$, define $\beta_w$ by $d_w = p(n-1)+\beta_w\sqrt{p(1-p)(n-1)}$. Then
\begin{align*}
&\mb{P}[\deg_V(v_n) = t]\\
&= (1+O(n^{-1/6}))\exp\bigg(\frac{(\sum_{i=1}^n\beta_i)(\sum_{i=1}^n\beta_i-2n\beta_n)}{2n^2}\bigg)\frac{\binom{h-\mbm{1}_V(v_n)}{t}\binom{n-h-\mbm{1}_{V^c}(v_n)}{d_n-t}}{\binom{n-1}{d_n}}\times\\
&\mb{E}_{\substack{S_1\sim\binom{V\setminus v_n}{t}\\S_2\sim\binom{V^c\setminus v_n}{d_n-t}\\S=S_1\cup S_2}}\exp\bigg(-\sqrt{\frac{p}{1-p}}\sum_{i\in W\setminus v_n}\bigg(-\frac{1-p}{p}\bigg)^{\mbm{1}_S(i)}\frac{\beta_i}{\sqrt{n-1}}-\frac{1}{2}\sum_{i\in W\setminus v_n}\bigg(\frac{1-p}{p}\bigg)^{2\mbm{1}_S(i)-1}\frac{\beta_i^2}{n-1}\bigg).
\end{align*}
Here $S_1,S_2$ are uniform over their respective domains.
\end{proposition}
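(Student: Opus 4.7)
The plan is to write the probability as a ratio of counts of graphs with prescribed degree sequences and then apply the McKay--Wormald enumeration formula (in the form stated precisely in \cref{sub:enum-computations}) to each count individually. Let $N(\mbf{d})$ denote the number of simple graphs on the labeled vertex set $W$ with degree sequence $\mbf{d}$. By conditioning on the neighborhood of $v_n$ and summing,
\[
\mb{P}[\deg_V(v_n)=t] \;=\; \frac{1}{N(\mbf{d})}\sum_{\substack{S_1\subseteq V\setminus v_n,\,|S_1|=t\\ S_2\subseteq V^c\setminus v_n,\,|S_2|=d_n-t}} N(\mbf{d}^{(S)}),
\]
where $S=S_1\cup S_2$ and $\mbf{d}^{(S)}$ is the reduced degree sequence on $W\setminus v_n$ obtained from $\mbf{d}$ by subtracting $\mbm{1}_S(i)$ from $d_i$ for each $i\ne n$.

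Next I would apply the McKay--Wormald formula to both $N(\mbf{d})$ and each $N(\mbf{d}^{(S)})$, cancelling the main factorial prefactors by writing $N(\mbf{d}^{(S)})/N(\mbf{d})$ combinatorially. The factorial part of the ratio, after summing over $S_1,S_2$, produces exactly the factor
\[
\frac{\binom{h-\mbm{1}_V(v_n)}{t}\binom{n-h-\mbm{1}_{V^c}(v_n)}{d_n-t}}{\binom{n-1}{d_n}},
\]
while the exponential McKay--Wormald correction factors (quadratic polynomials in the normalized degree deviations $\beta_i$) contribute the remaining terms. Specifically, the exponent in the McKay--Wormald formula involves $\sum_i \beta_i^2$ and $(\sum_i\beta_i)^2$, and taking the difference between $\mbf{d}$ and $\mbf{d}^{(S)}$ linearizes this in the indicators $\mbm{1}_S(i)$. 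A careful bookkeeping, noting that reducing $d_i$ by $\mbm{1}_S(i)$ shifts $\beta_i$ by $-\mbm{1}_S(i)/\sqrt{p(1-p)(n-1)}$, yields the linear-in-$\beta_i$ term $-\sqrt{p/(1-p)}\sum_{i\ne n}(-(1-p)/p)^{\mbm{1}_S(i)}\beta_i/\sqrt{n-1}$ and the quadratic term $-\frac{1}{2}\sum_{i\ne n}((1-p)/p)^{2\mbm{1}_S(i)-1}\beta_i^2/(n-1)$ inside the expectation, together with the global prefactor involving $(\sum_i\beta_i)(\sum_i\beta_i - 2n\beta_n)/(2n^2)$ from completing the square in the mean-shift term.

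Finally I would rewrite the normalized sum over $(S_1,S_2)$ as the expectation $\mb{E}_{S_1,S_2}$ where $S_1,S_2$ are independent uniform samples of the stated sizes. The hypotheses $\min(h,n-h)\ge n/(\log n)^{1/8}$, the bounded range of $p$, and the assumption $d_w = pn+O(\sqrt{p(1-p)}n^{1/2+\epsilon})$ are used to ensure that both sequences $\mbf{d}$ and $\mbf{d}^{(S)}$ lie in the regime of validity of McKay--Wormald uniformly over $S$, and that the relative error in the enumeration ratio can be controlled by $O(n^{-1/6})$ after Taylor expansion of the exponentials (the cubic and higher terms in $\beta_i/\sqrt{n-1}$ being negligible given the $\epsilon$-bound on degree irregularity).

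The main technical obstacle is twofold. First, one must track all the error terms in McKay--Wormald through a ratio, in particular verifying that additive errors do not blow up when taking the quotient; this requires the enumeration formula to be applied in a multiplicative form with a controlled relative error valid across the whole range of degree sequences $\mbf{d}^{(S)}$ that appear. Second, one must carefully expand the quadratic exponents in $\mbf{d}^{(S)}$ versus $\mbf{d}$: the linear cross terms produce the sum $\sum_i \beta_i$-type corrections which must be matched exactly to the stated prefactor $\exp((\sum\beta_i)(\sum\beta_i-2n\beta_n)/(2n^2))$ after segregating the $\mbm{1}_S$-dependent part into the expectation. Once this bookkeeping is done, the stated formula falls out as written.
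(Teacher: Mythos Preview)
Your high-level strategy is correct and matches the paper: write $\mb{P}[\deg_V(v_n)=t]$ as a sum over neighborhoods $S$ of $N(\mbf{d}^{(S)})/N(\mbf{d})$ and apply the McKay--Wormald formula (\cref{thm:enum-graph}) to numerator and denominator. However, you have misidentified where the various pieces of the answer come from, and if you followed your stated bookkeeping you would not arrive at the claimed expression.

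Specifically, the exponential correction factor $\exp(1/4-\gamma_2^2/(4\mu^2(1-\mu)^2))$ in \cref{thm:enum-graph} is \emph{not} the source of the $S$-dependent terms inside the expectation. In the paper one checks $\gamma_2^2(S)=\gamma_2^2+O(n^{-1/4})$ and $\mu_S=\mu+O(1/n)$, so the ratio of these exponentials is $1+O(n^{-1/4})$ and simply absorbed into the multiplicative error. The linear and quadratic $\beta_i$ terms in the expectation instead come from the ratio of the \emph{per-vertex} binomials,
\[
\frac{\prod_{i=1}^{n-1}\binom{n-2}{d_i-\mbm{1}_S(i)}}{\prod_{i=1}^{n-1}\binom{n-1}{d_i}}=\prod_{i\in S}\frac{d_i}{n-1}\prod_{i\notin S}\frac{n-1-d_i}{n-1},
\]
after writing $d_i/(n-1)=p(1+\beta_i\sqrt{(1-p)/(p(n-1))})$ and $(n-1-d_i)/(n-1)=(1-p)(1-\beta_i\sqrt{p/((1-p)(n-1))})$ and Taylor expanding the logarithm to second order. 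Likewise, the global prefactor $\exp((\sum\beta_i)(\sum\beta_i-2n\beta_n)/(2n^2))$ does not arise from ``completing the square in the mean-shift term'' but from the ratio of the edge-count binomials
\[
A=\frac{\binom{(n-1)(n-2)/2}{r-d_n}\binom{(n-1)(n-2)}{2r-2d_n}^{-1}}{\binom{n(n-1)/2}{r}\binom{n(n-1)}{2r}^{-1}}p^{d_n}(1-p)^{n-1-d_n},
\]
which is evaluated separately as \cref{lem:binomial-approximation}. The factor $p^{-d_n}(1-p)^{-(n-1-d_n)}$ hidden in $A$ is what cancels the $p,1-p$ prefactors coming out of the per-vertex product above. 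With these corrected attributions your outline becomes the paper's proof.
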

We will defer the proof of this to \cref{sub:enum-computations,sub:enum-proofs}. We now turn to various consequences of this formula. To proceed, we will need to understand expressions as appearing in the right side of \cref{prop:graph-expectation}. To this end, we state the following general results about sums of random variables constrained to live on a slice.
\begin{lemma}\label{lem:slice-subgaussian}
Let $a_1,\ldots,a_n\in\mb{R}$ and let $X = \sum_{i=1}^na_i\xi_i$, where $\xi = (\xi_1,\ldots,\xi_n)$ is uniform on the subset of $\{0,1\}^n$ with sum $s$. Furthermore assume that $\eta^2 = \sum_{i=1}^na_i^2-(\sum_{i=1}^na_i)^2/n$. We have
\[\mb{P}[|X-\mb{E}X|\ge t]\le 2\exp(-t^2/(4\eta^2))\]
and
\[\mb{E}e^{X-\mb{E}X}\le 2e^{O(\eta^2)}.\]
\end{lemma}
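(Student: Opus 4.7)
The plan is to reduce to sampling with replacement via Hoeffding's classical convex-ordering theorem, and then apply standard Chernoff--Hoeffding machinery on independent Bernoullis.

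First, I would recenter the coefficients. Since $\xi$ is supported on the slice $\sum_i \xi_i = s$, replacing each $a_i$ by $a_i - \bar{a}$ (with $\bar{a} = n^{-1}\sum_i a_i$) shifts $X$ only by the deterministic constant $\bar{a}s$, hence leaves $X - \mb{E}X$ unchanged. After this reduction, $\sum_i a_i = 0$ while $\sum_i a_i^2 = \eta^2$ by the definition of $\eta$ in the statement, and in particular $\mb{E}X = (s/n)\sum_i a_i = 0$.

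Next, I would invoke Hoeffding's classical theorem on sampling without versus with replacement: for any convex $\phi\colon\mb{R}\to\mb{R}$,
\[\mb{E}\phi\!\left(\sum_i a_i \xi_i\right)\le\mb{E}\phi\!\left(\sum_i a_i Y_i\right),\]
where $Y_1,\ldots,Y_n$ are i.i.d.\ $\on{Bernoulli}(s/n)$. Applying this to $\phi(x) = e^{\lambda x}$, and using $\sum_i a_i = 0$ to rewrite $\sum_i a_i Y_i$ as a sum of independent mean-zero variables $a_i(Y_i - s/n)$, each supported in an interval of length $|a_i|$, Hoeffding's lemma then yields
\[\mb{E}e^{\lambda(X-\mb{E}X)} = \mb{E}e^{\lambda X}\le\prod_i\mb{E}e^{\lambda a_i(Y_i - s/n)}\le e^{\lambda^2 \eta^2/8}.\]

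The choice $\lambda = 1$ immediately gives $\mb{E}e^{X-\mb{E}X}\le e^{\eta^2/8}$, which verifies the second assertion (the leading factor of $2$ in the statement is unneeded but harmless). For the tail bound, the Chernoff method yields $\mb{P}[X - \mb{E}X \ge t]\le\exp(-\lambda t + \lambda^2 \eta^2/8)$; optimizing at $\lambda = 4t/\eta^2$ produces $e^{-2t^2/\eta^2}$, and combining with a symmetric lower-tail argument via a union bound yields the stated $2e^{-t^2/(4\eta^2)}$ with substantial slack. The only nontrivial ingredient is Hoeffding's convex-ordering reduction from the slice to the i.i.d.\ Bernoulli product, but this is a standard and well-known result; every remaining step is routine.
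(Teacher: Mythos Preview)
Your reduction to independent Bernoullis is the step that fails. Hoeffding's convex-ordering theorem compares sampling $s$ items \emph{without replacement} from a population $\{a_1,\ldots,a_n\}$ to sampling $s$ items \emph{with replacement} --- that is, to $V=\sum_{j=1}^s Z_j$ with each $Z_j$ independently uniform on $\{a_1,\ldots,a_n\}$. It does \emph{not} compare the slice variable $X$ to $W=\sum_i a_i Y_i$ with $Y_i$ i.i.d.\ $\on{Bernoulli}(s/n)$, and that latter inequality is false in general. For instance, with $n=2$, $s=1$, $a=(1,-1)$ (already centered), the slice gives a Rademacher variable, so $\mb{E}e^{\lambda X} = \cosh\lambda$, whereas $\mb{E}e^{\lambda W} = \mb{E}e^{\lambda Y_1}\,\mb{E}e^{-\lambda Y_2} = \tfrac12(1+\cosh\lambda)$, which is strictly smaller for every $\lambda\neq 0$; equivalently $\on{Var}X=1>1/2=\on{Var}W$. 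More generally, the slice coordinates are negatively correlated, so a linear functional with mixed-sign coefficients can have \emph{larger} variance on the slice than under the product measure, and no convex-order domination in your direction is available. Replacing $W$ by the correct with-replacement comparison $V$ does not rescue the argument either: applying Hoeffding's lemma to each $Z_j$ only yields a sub-Gaussian parameter of order $s(\max_i a_i - \min_i a_i)^2$, not $\eta^2$.

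The paper obtains the tail bound instead by a direct Azuma--Hoeffding martingale argument on the slice (citing an external reference for the details), which produces the correct $\eta^2$ in the exponent, and then deduces the exponential-moment bound by integrating the tail against $e^t$ in the standard sub-Gaussian way.
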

\begin{proof}
The first part is by Azuma-Hoeffding (see~\cite[Lemma~2.2]{KST19} for a detailed proof). The second part follows from integrating the first (see~\cite[Proposition~2.5.2]{Ver18}).
\end{proof}
\begin{lemma}\label{lem:slice-moment}
Let $a_1,\ldots,a_n\in\mb{R}$ and let $X = \sum_{i=1}^na_i\xi_i$, where $\xi = (\xi_1,\ldots,\xi_n)$ is uniform on the subset of $\{0,1\}^n$ with sum $s$ such that $\min(s,n-s)\ge n(\log n)^{-2}$. Furthermore assume that $|a_i|\le n^{-1/2}(\log n)^2$ and $\eta^2 = \sum_{i=1}^na_i^2-(\sum_{i=1}^na_i)^2/n\le \sqrt{\log n}$. We have
\[\mb{E}e^X = \exp\bigg(\mb{E}X + \frac{1}{2}\on{Var}X + O(n^{-1/9})\bigg).\]
\end{lemma}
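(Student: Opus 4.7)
The plan is to compare the slice distribution to independent Bernoullis via an exponential tilt, reducing the computation to a local central limit theorem. Setting $p = s/n$, the vector $\xi$ has the same law as $(\zeta_1,\ldots,\zeta_n)$ of i.i.d.\ $\on{Bernoulli}(p)$'s conditioned on $\sum_i \zeta_i = s$, and a direct factorization of $\mathbb{E}[e^X \mathbbm{1}[\sum_i \zeta_i = s]]$ gives
\[\mathbb{E}e^X = \frac{C\cdot\mathbb{P}[\sum_i\tilde\zeta_i = s]}{\mathbb{P}[\sum_i\zeta_i = s]},\qquad C := \prod_{i=1}^n(1-p+pe^{a_i}),\]
where $\tilde\zeta_i$ are independent $\on{Bernoulli}(\tilde{p}_i)$ with $\tilde{p}_i = pe^{a_i}/(1-p+pe^{a_i})$. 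Every quantity on the right now depends only on sums of independent variables, so standard tools apply.

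From here the computation proceeds in two parallel strands. First, I would Taylor-expand $\log C$: the hypothesis $|a_i|\le n^{-1/2}(\log n)^2$ together with $\sum_i a_i^2 \le n\|a\|_\infty^2 \le (\log n)^4$ yields
\[\log C = p\sum_i a_i + \tfrac{1}{2}p(1-p)\sum_i a_i^2 + O\big(n^{-1/2}(\log n)^6\big).\]
Second, I would apply a quantitative local central limit theorem to both probabilities in the ratio: $\sum_i\zeta_i\sim\on{Bin}(n,p)$ is exactly binomial, and $\sum_i\tilde\zeta_i$ has mean $M = np + p(1-p)\sum_i a_i + O((\log n)^4)$ and variance $V = p(1-p)n(1 + O(n^{-1/2}(\log n)^4))$. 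Since $np(1-p)$ is polynomial in $n$ under the given bounds on $p$ and $s$, both satisfy local central limit theorems with multiplicative error $n^{-1/2}(\log n)^{O(1)}$; using $s = np$, the two Gaussian densities combine to contribute $\exp(-\tfrac{p(1-p)}{2n}(\sum_i a_i)^2)$ to the exponent.

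Combining the two strands yields
\[\log\mathbb{E}e^X = p\sum_i a_i + \tfrac{1}{2}p(1-p)\Big[\sum_i a_i^2 - \tfrac{1}{n}\big(\sum_i a_i\big)^2\Big] + O(n^{-1/9}),\]
which matches $\mathbb{E}X + \tfrac{1}{2}\on{Var}X$ up to a harmless discrepancy of order $\eta^2/n$ coming from the slice covariance $\on{Cov}(\xi_i,\xi_j) = -p(1-p)/(n-1)$ rather than $-p(1-p)/n$. The main obstacle will be the error bookkeeping: the target accuracy $O(n^{-1/9})$ sits \emph{in the exponent}, while $\mathbb{E}X$ and $\tfrac{1}{2}\on{Var}X$ can each be as large as $(\log n)^{O(1)}$, so the local CLT and Taylor remainders must be controlled tightly enough that their contributions to $(s-M)^2/V$ and $\log V$ remain $O(n^{-1/9})$. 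This is also why the subgaussian bound of \cref{lem:slice-subgaussian}, which only yields $\mathbb{E}e^{X-\mathbb{E}X}\le 2e^{O(\eta^2)}$, does not suffice, and the exponential tilt is essential.
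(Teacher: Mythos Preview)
Your approach is correct and genuinely different from the paper's. The paper does \emph{not} tilt to independent Bernoullis; instead it writes $\mathbb{E}e^{X-\mathbb{E}X}=\int_{-\infty}^\infty e^t\,\mathbb{P}[X-\mathbb{E}X\ge t]\,dt$, truncates the integral at $8\eta\sqrt{\log n}$ using the sub-Gaussian tail from \cref{lem:slice-subgaussian}, and then splits into two cases. If $\sigma\le n^{-1/8}$ the truncated integral is already $1+O(n^{-1/9})$ and Jensen gives the matching lower bound. If $\sigma>n^{-1/8}$ the paper invokes Bolthausen's combinatorial CLT to replace $\mathbb{P}[X-\mathbb{E}X\ge t]$ by the Gaussian tail up to Kolmogorov error $O(\sum|a_i|^3/\sigma^3)=O(n^{-2/17})$, and then evaluates the Gaussian integral exactly. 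So the paper's route is ``sub-Gaussian tail $+$ combinatorial CLT $+$ layer-cake integration'', whereas yours is ``exponential tilting $+$ local CLT for the conditioning event''. Your method is more direct and avoids both \cref{lem:slice-subgaussian} and Bolthausen's result; the paper's method keeps everything at the level of CDFs and reuses the concentration lemma it already has.

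One point in your argument deserves care. With your notation, $|s-M|\asymp p(1-p)\bigl|\sum_i a_i\bigr|$ can be as large as $\sqrt{n}(\log n)^2$ (take all $a_i$ equal to $n^{-1/2}(\log n)^2$; then $\eta^2=0$ but $(\sum a_i)^2/n=(\log n)^4$). Thus $(s-M)/\sqrt{V}$ may be of order $(\log n)^2$, so $e^{-(s-M)^2/(2V)}$ can be super-polynomially small and the \emph{additive} error in a standard local CLT swamps the main term. The cleanest fix is to observe that on the slice $\sum_i\xi_i=s$ one may replace $a_i$ by $a_i-\bar a$ at the cost of an additive constant $s\bar a$ in $X$; after this centering $\sum_i a_i=0$, whence $|s-M|=O((\log n)^4)$ and the basic local CLT gives the claimed multiplicative error $n^{-1/2}(\log n)^{O(1)}$. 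Alternatively one can cite a moderate-deviation local limit theorem for sums of independent Bernoullis. Either way your outline goes through.
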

\begin{proof}
Let $\mu = \mb{E}X$ and $\sigma^2 = \on{Var}X$. Clearly
\begin{align*}
\sigma^2&=\sum_{i\neq j}a_ia_j(\mb{E}[\xi_i\xi_j]-\mb{E}\xi_i\mb{E}\xi_j) + \sum_i a_i^2(\mb{E}\xi_i^2-(\mb{E}\xi_i)^2)\\
&=\sum_{i\neq j}a_ia_j\bigg(\frac{s(s-1)}{n(n-1)}-\frac{s^2}{n^2}\bigg) + \sum_i a_i^2\bigg(\frac{s}{n}-\frac{s^2}{n^2}\bigg)=\frac{s(n-s)}{n(n-1)}\eta^2.
\end{align*}

First by \cref{lem:slice-subgaussian} we have
\[\mb{P}[|X-\mb{E}X|\ge t]\le 2\exp(-t^2/(4\eta^2))\]
for all $t\ge 0$. Now
\begin{align*}
\mb{E}e^{X-\mb{E}X} &= \int_{-\infty}^\infty e^t\mb{P}[X-\mb{E}X\ge t]dt=\int_{-\infty}^{8\eta\sqrt{\log n}}e^t\mb{P}[X-\mb{E}X\ge t]dt +O\bigg(\int_{4\eta\sqrt{\log n}}^\infty e^{t-t^2/(4\eta^2)}dt\bigg)\\
&=\int_{-\infty}^{8\eta\sqrt{\log n}}e^t\mb{P}[X-\mb{E}X\ge t]dt +O\bigg(\int_{8\eta\sqrt{\log n}}^\infty e^{-t^2/(8\eta^2)}dt\bigg)\\
&=\int_{-\infty}^{8\eta\sqrt{\log n}}e^t\mb{P}[X-\mb{E}X\ge t]dt+ O(n^{-4}).
\end{align*}
If $\sigma\le n^{-1/8}$, then $\eta$ is similarly bounded and we obtain an upper bound of the form $1+O(n^{-1/9})$. Combining with $\mb{E}e^X\ge e^{\mb{E}X}$, the result follows. The result follows. If $\sigma > n^{-1/8}$ then a combinatorial central limit theorem of Bolthausen \cite{Bol84} shows
\[\on{d}_{\mr{K}}(X-\mb{E}X,\mc{N}(0,\sigma^2)) = O\bigg(\sum_{i=1}^n |a_i|^3/\sigma^3\bigg) = O(n^{-2/17}).\]
This allows us the replace the integrand above with the CDF of a Gaussian, and we easily derive
\[\mb{E}e^{X-\mb{E}X} = e^{\frac{\sigma^2}{2}} + O(n^{-2/17})\cdot O(e^{\eta\sqrt{\log n}}) = \exp(\sigma^2/2 + O(n^{-1/9})).\qedhere\]
\end{proof}

We now use this information to explicitly compute the formula in \cref{prop:graph-expectation} under some slight additional hypotheses.
\begin{proposition}\label{prop:graph-bounded}
Assume the hypotheses of \cref{prop:graph-expectation}. Assume additionally that $\beta_w = O((\log n)^2)$ for all $w\in W$. Then if $|t-ph|\ge n^{1/2}(\log n)^{25}$ we have
\[\mb{P}[\deg_V(v_n) = t]\le\exp(-\Omega((t-ph)^2/n)).\]
If $|t-ph|\le n^{3/5}$ and furthermore $(\sum_{i=1}^n\beta_i^2)/n\le(\log n)^{1/9}$ then we have
\begin{align*}
&\mb{P}[\deg_V(v_n) = t]\\
&= (1+O(n^{-1/10}))\frac{\binom{h}{t}\binom{n-h-1}{d_n-t}}{\binom{n-1}{d_n}}\exp\bigg[\frac{(\sum_{i=1}^n\beta_i)(\sum_{i=1}^n\beta_i-2n\beta_n)}{2n^2}\\
&-\sqrt{\frac{p}{1-p}}\bigg(\sum_{i\in V\setminus v_n}\bigg(1-\frac{t}{ph}\bigg)\frac{\beta_i}{\sqrt{n-1}}+\sum_{i\in V^c\setminus v_n}\bigg(1-\frac{d_n-t}{p(n-h)}\bigg)\frac{\beta_i}{\sqrt{n-1}}\bigg)-\frac{1}{2}\sum_{i\in W\setminus v_n}\frac{\beta_i^2}{n-1}\\
&+\frac{1}{2nh}\sum_{i < j\in V\setminus v_n}(\beta_i-\beta_j)^2 + \frac{1}{2n(n-h)}\sum_{i < j\in V^c\setminus v_n}(\beta_i-\beta_j)^2\bigg].
\end{align*}
\end{proposition}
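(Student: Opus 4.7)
The plan is to start from \cref{prop:graph-expectation} and analyze each of the three factors it produces: the binomial-coefficient ratio, the explicit $\beta$-dependent prefactor $\exp((\sum\beta_i)(\sum\beta_i-2n\beta_n)/(2n^2))$, and the expectation over independent uniform subsets $S_1\subseteq V\setminus v_n$ and $S_2\subseteq V^c\setminus v_n$ of sizes $t$ and $d_n-t$. The first step is to linearize the random exponent using the identities $(-(1-p)/p)^{\mathbf{1}_S(i)} = 1 - \mathbf{1}_S(i)/p$ and $((1-p)/p)^{2\mathbf{1}_S(i)-1} = p/(1-p) + \mathbf{1}_S(i)(1-2p)/(p(1-p))$, rewriting the exponent as $Z_0 + \sum_{i\in V\setminus v_n} c_i\mathbf{1}_{S_1}(i) + \sum_{j\in V^c\setminus v_n}c_j\mathbf{1}_{S_2}(j)$ with $c_i = \beta_i/\sqrt{p(1-p)(n-1)} - (1-2p)\beta_i^2/(2p(1-p)(n-1))$ and $Z_0$ collecting the deterministic pieces $-\sqrt{p/(1-p)}\sum\beta_i/\sqrt{n-1} - p\sum\beta_i^2/(2(1-p)(n-1))$.

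For the tail bound, the key is the hypergeometric concentration estimate $\binom{h-\mathbf{1}_V(v_n)}{t}\binom{n-h-\mathbf{1}_{V^c}(v_n)}{d_n-t}/\binom{n-1}{d_n}\le\exp(-\Omega((t-ph)^2/n))$. The explicit prefactor contributes at most $\exp(O((\log n)^4))$ under $|\beta_w|\le(\log n)^2$, and the expectation is controlled by the sub-Gaussian bound from \cref{lem:slice-subgaussian} via $\mb{E}e^{X}\le\exp(\mb{E}X + O(\on{Var}X))$. After combining $\mb{E} X_1 + \mb{E} X_2$ with the deterministic linear part of $Z_0$, the nearly-cancelled residual is of size $O(|t-ph|(\log n)^{17/8}/\sqrt n)$, which together with the $\exp(O((\log n)^k))$ terms is absorbed into $\exp(-\Omega((t-ph)^2/n))$ by completing the square, using that $(t-ph)^2/n\ge(\log n)^{50}$ by hypothesis.

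For the asymptotic formula, under the additional assumption $\sum\beta_i^2/n\le(\log n)^{1/9}$, the coefficients satisfy $|c_i|\le n^{-1/2}(\log n)^{9/4}$ and $\sum c_i^2 = O((\log n)^{11/18})$, so \cref{lem:slice-moment} applied separately to $X_1$ and $X_2$ yields $\mb{E}e^{X_k}=\exp(\mb{E}X_k + \tfrac12\on{Var}X_k + O(n^{-1/9}))$. The linear-in-$\beta$ contribution to $\mb{E}X_1$ is $(t/(h-\mathbf{1}_V(v_n)))\sqrt{p/(1-p)}\sum_{V\setminus v_n}\beta_i/\sqrt{n-1}$, which combines with the deterministic piece of $Z_0$ over $V\setminus v_n$ to give the claimed $-\sqrt{p/(1-p)}(1-t/(ph))\sum_{V\setminus v_n}\beta_i/\sqrt{n-1}$, and analogously for $V^c$. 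The quadratic-in-$\beta$ portion of $\mb{E}X_k$ combines with the $-p\sum\beta_i^2/(2(1-p)(n-1))$ term of $Z_0$ to produce exactly $-\tfrac12\sum_{W\setminus v_n}\beta_i^2/(n-1)$ (the algebra being exact when $t=ph$ and $d_n-t=p(n-h)$, with $O(n^{-1/10})$ relative error otherwise). The variance simplifies as
\[
\on{Var} X_1 = \frac{t(h'-t)}{h'(h'-1)}\cdot\frac{1}{h'}\sum_{i<j\in V\setminus v_n}\frac{(\beta_i-\beta_j)^2}{p(1-p)(n-1)} = (1+O(n^{-1/10}))\frac{1}{nh}\sum_{i<j\in V\setminus v_n}(\beta_i-\beta_j)^2,
\]
where $h' = h-\mathbf{1}_V(v_n)$, and analogously for $X_2$. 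Finally the ratio $\binom{h-\mathbf{1}_V(v_n)}{t}\binom{n-h-\mathbf{1}_{V^c}(v_n)}{d_n-t}/(\binom{h}{t}\binom{n-h-1}{d_n-t})$ equals $(h-t)(n-h)/(h(n-h-d_n+t)) = 1+O(n^{-1/10})$ under the given regime.

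The \textbf{main obstacle} is bookkeeping the $1+O(n^{-1/10})$ multiplicative errors through several near-cancellations: the mean of the random linear piece almost cancels the deterministic part of $Z_0$, and the mean of the random quadratic-in-$\mathbf{1}_S$ piece combines with another deterministic term to produce the clean $-\tfrac12\sum\beta_i^2/(n-1)$. Ensuring that the coefficients $c_i$ satisfy the quantitative hypotheses of \cref{lem:slice-moment} across the whole range $p\ge(\log n)^{-1/4}$ also requires threading together $|\beta_i|\le(\log n)^2$, $\sum\beta_i^2/n\le(\log n)^{1/9}$, and $p(1-p)\ge(\log n)^{-1/2}$, each tuned so that the accumulated $(\log n)$-powers stay below the $n^{-1/10}$ budget.
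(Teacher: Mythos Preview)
Your approach is essentially the same as the paper's: both start from \cref{prop:graph-expectation}, treat the random exponent as a sum over two independent slices, bound the tail via the hypergeometric estimate combined with \cref{lem:slice-subgaussian}, and obtain the asymptotic via \cref{lem:slice-moment} after computing the mean and variance on each slice. Your explicit linearization with the coefficients $c_i$ is just a more concrete packaging of the same computation the paper carries out directly on $X$; the only cosmetic difference is that the paper tracks $\mb{E}X$ and $\on{Var}X$ without first isolating $Z_0$, and handles the $\binom{h-\mathbf{1}_V(v_n)}{t}$ versus $\binom{h}{t}$ discrepancy in a single throwaway line rather than writing out the ratio as you do. One small numerical wobble: your stated bound $p(1-p)\ge(\log n)^{-1/2}$ is weaker than what actually holds (namely $p(1-p)\gtrsim(\log n)^{-1/4}$), and with your weaker bound the estimate $\sum c_i^2=O((\log n)^{11/18})$ just barely overshoots the $\eta^2\le\sqrt{\log n}$ hypothesis of \cref{lem:slice-moment}; using the correct bound on $p(1-p)$ fixes this with room to spare.
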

\begin{proof}
We apply \cref{prop:graph-expectation}. Let
\[X = -\sqrt{\frac{p}{1-p}}\sum_{i\in W\setminus v_n}\bigg(-\frac{1-p}{p}\bigg)^{\mbm{1}_S(i)}\frac{\beta_i}{\sqrt{n-1}}-\frac{1}{2}\sum_{i\in W\setminus v_n}\bigg(\frac{1-p}{p}\bigg)^{2\mbm{1}_S(i)-1}\frac{\beta_i^2}{n-1},\]
where $S_1\sim\binom{V\setminus v_n}{t}$ and $S_2\sim\binom{V^c\setminus v_n}{d_n-t}$. The point will be that in typical cases $X$ is a random variable with sub-Gaussian tails, and that it is converging to a Gaussian, which are together enough to compute its exponential moment. When $t$ is far from $ph$, we will instead obtain a tail bound. We have
\begin{align*}
&\mb{E}X\\
&= -\sqrt{\frac{p}{1-p}}\sum_{i\in V\setminus v_n}\bigg(\frac{t(-(1-p))}{hp}+\frac{h-t}{h}\bigg)\frac{\beta_i}{\sqrt{n-1}} + O(n^{-1/3})\\
&\quad-\sqrt{\frac{p}{1-p}}\sum_{i\in V^c\setminus v_n}\bigg(\frac{(d_n-t)(-(1-p))}{(n-h)p}+\frac{(n-h)-(d_n-t)}{n-h}\bigg)\frac{\beta_i}{\sqrt{n-1}} + O(n^{-1/3})\\
&\quad-\frac{1}{2}\sum_{i\in V\setminus v_n}\bigg(\frac{t(1-p)}{hp}+\frac{(h-t)p}{h(1-p)}\bigg)\frac{\beta_i^2}{n-1} + O(n^{-1/3})\\
&\quad-\frac{1}{2}\sum_{i\in V^c\setminus v_n}\bigg(\frac{(d_n-t)(1-p)}{(n-h)p}+\frac{((n-h)-(d_v-t))p}{(n-h)(1-p)}\bigg)\frac{\beta_i^2}{n-1} + O(n^{-1/3})\\
&=-\sqrt{\frac{p}{1-p}}\bigg(\sum_{i\in V\setminus v_n}\bigg(1-\frac{t}{ph}\bigg)\frac{\beta_i}{\sqrt{n-1}}+\sum_{i\in V^c\setminus v_n}\bigg(1-\frac{d_n-t}{p(n-h)}\bigg)\frac{\beta_i}{\sqrt{n-1}}\bigg)\\
&\quad-\frac{1}{2}\sum_{i\in W\setminus v_n}\frac{\beta_i^2}{n-1} + O(n^{-1/3} + |t-ph|(\log n)^2/n).
\end{align*}
The initial additive error terms $O(n^{-1/3})$ come from the fact that $v_n\in V$ or $v_n\in V^c$ slightly change the fractions listed above, but not by much.

At this point, if $|t-ph|\ge n^{3/5}$, we have
\[\frac{\binom{h-\mbm{1}_V(v_n)}{t}\binom{n-h-\mbm{1}_{V^c}(v_n)}{d_n-t}}{\binom{n-1}{d_n}}\le\exp(-\Omega((t-ph)^2/n))\]
by tail bounds for the hypergeometric distribution (see e.g.~\cite[Theorem~2.10]{JLR00}). The initial exponential term is bounded by $\exp(O((\log n)^4))$, and we are left with $\mb{E}\exp(X)$. Now \cref{lem:slice-subgaussian} demonstrates $\mb{E}\exp(X)\le\exp(\mb{E}X + O((\log n)^4))$ since the coefficient variance in $X$ is $O((\log n)^4/n)$ by the given conditions. But the above demonstrates
\[|\mb{E}X| = O\bigg(\frac{|t-ph|}{\sqrt{n}}(\log n)^2\bigg).\]
This immediately gives a bound of the claimed quality.

From now on we assume $|t-ph|\le n^{3/5}$. Note that the error term computed on $\mb{E}X$ is now of quality $O(n^{-1/3})$ uniformly. We next compute the variance of $X$. It is straightforward to see that $X$ and 
\[X' = -\sqrt{\frac{p}{1-p}}\sum_{i\in W\setminus v_n}\bigg(-\frac{1-p}{p}\bigg)^{\mbm{1}_S(i)}\frac{\beta_i}{\sqrt{n-1}}\]
have $|\on{Var}X-\on{Var}X'| = O(n^{-1/4})$. From the proof of \cref{lem:slice-moment}, we see
\begin{align*}
\on{Var}X &=\frac{1}{p(1-p)(n-1)}\bigg(\frac{t(h-t)}{h(h-1)}\frac{\sum_{i < j\in V\setminus v_n}(\beta_i-\beta_j)^2}{h}\\
&\qquad\qquad+ \frac{(d_n-t)((n-h)-(d_n-t))}{(n-h)(n-h-1)}\frac{\sum_{i < j\in V^c\setminus v_n}(\beta_i-\beta_j)^2}{n-h}\bigg) + O(n^{-1/4}),
\end{align*}
where we again use that the fraction $t/|V\setminus v_n|$ is close to $t/h$ regardless of if $v_n\in V$. Using $t = ph + O(n^{3/5})$ and $d_n = pn + O(\sqrt{p(1-p)n}(\log n)^2)$, we find
\[\on{Var}X =  \frac{1}{nh}\sum_{i < j\in V\setminus v_n}(\beta_i-\beta_j)^2 + \frac{1}{n(n-h)}\sum_{i < j\in V^c\setminus v_n}(\beta_i-\beta_j)^2+ O(n^{-1/4}).\]
Note that $\on{Var}X\le\sum_{i=1}^n\beta_i^2/\min(h,n-h) = O(n(\log n)^{1/9}/\min(h,n-h))$. Now apply \cref{lem:slice-moment} to the two slices defining $X$. Note that the condition $\eta^2\le\sqrt{\log n}$ follows from the inequalities $(n/h)(p(1-p))^{-1}(\log n)^{1/9} < \sqrt{\log n}$ and the relation between $\sigma^2,\eta^2$ in the proof of \cref{lem:slice-moment}. Therefore
\[\mb{E}e^X = \exp\Big(\mb{E}X + \frac{1}{2}\on{Var}X + O(n^{-1/10})\Big).\]
Finally, using \cref{prop:graph-expectation}, we obtain
\begin{align*}
&\mb{P}[\deg_V(v_n) = t]\\
&= (1+O(n^{-1/10}))\frac{\binom{h}{t}\binom{n-h-1}{d_n-t}}{\binom{n-1}{d_n}}\exp\bigg[\frac{(\sum_{i=1}^n\beta_i)(\sum_{i=1}^n\beta_i-2n\beta_n)}{2n^2}\\
&-\sqrt{\frac{p}{1-p}}\bigg(\sum_{i\in V\setminus v_n}\bigg(1-\frac{t}{ph}\bigg)\frac{\beta_i}{\sqrt{n-1}}+\sum_{i\in V^c\setminus v_n}\bigg(1-\frac{d_n-t}{p(n-h)}\bigg)\frac{\beta_i}{\sqrt{n-1}}\bigg)-\frac{1}{2}\sum_{i\in W\setminus v_n}\frac{\beta_i^2}{n-1}\\
&+\frac{1}{2nh}\sum_{i < j\in V\setminus v_n}(\beta_i-\beta_j)^2 + \frac{1}{2n(n-h)}\sum_{i < j\in V^c\setminus v_n}(\beta_i-\beta_j)^2\bigg].
\end{align*}
We used that the product of binomials changes by a small factor upon swapping between $v\in V$ and $v\in V^c$.

\end{proof}

Finally we note a massive simplification of this formula in the case when $h$ is near $n/2$ and the total number of edges is close to $p\binom{n}{2}$.
\begin{proposition}\label{prop:graph-balanced}
Assume the hypotheses of the second part of \cref{prop:graph-bounded}. Assume additionally that $|h-n/2| = O(\sqrt{n\log n})$ and $\sum_{i\in W}\beta_i = O(n^{5/6})$. Then for $|\gamma|\le n^{1/10}$ with $ph+\gamma\sqrt{p(1-p)n}\in\mb{Z}$, we have
\[\mb{P}[\deg_V(v_n) = ph+\gamma\sqrt{p(1-p)n}] = \frac{\sqrt{2}+O(n^{-1/10})}{\sqrt{\pi p(1-p)n}} \exp\bigg(-\frac{1}{2}\bigg(2\gamma-\beta_n-\frac{\sum_{i\in V}\beta_i}{n/2}\bigg)^2\bigg).\]
\end{proposition}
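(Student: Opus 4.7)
The plan is to apply the detailed formula given by the second part of \cref{prop:graph-bounded} at the specified value $t = ph + \gamma\sqrt{p(1-p)n}$ and systematically simplify the resulting exponent, exploiting the additional hypotheses $|h-n/2| = O(\sqrt{n\log n})$, $\sum_{i\in W}\beta_i = O(n^{5/6})$, and $|\gamma|\le n^{1/10}$.

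First I check the hypotheses of the second part of \cref{prop:graph-bounded}: the bound $|\gamma|\le n^{1/10}$ forces $|t-ph| = |\gamma|\sqrt{p(1-p)n}\le n^{3/5}$, and the constraint $(\sum\beta_i^2)/n\le (\log n)^{1/9}$ is inherited. Next I invoke a local limit theorem for the hypergeometric distribution to handle the binomial prefactor $\binom{h}{t}\binom{n-h-1}{d_n-t}/\binom{n-1}{d_n}$. Expanding $d_n = p(n-1)+\beta_n\sqrt{p(1-p)(n-1)}$ and $h = n/2 + O(\sqrt{n\log n})$, this hypergeometric has mean $ph + (\beta_n/2)\sqrt{p(1-p)n}\,(1+o(1))$ and variance $p(1-p)n/4\,(1+o(1))$, yielding the approximation
\[\sqrt{\tfrac{2}{\pi p(1-p)n}}\,\exp\!\bigl(-2\gamma^2 + 2\gamma\beta_n - \tfrac12\beta_n^2\bigr)\]
up to a $(1+o(1))$ multiplicative factor.

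The main algebraic task is to simplify the exponent displayed in \cref{prop:graph-bounded}. Write $S = \sum_{i\in V}\beta_i$, $T = \sum_{i\in V^c}\beta_i$, and $\Sigma = S+T$. Substituting our value of $t$ and using $h, n-h \approx n/2$, the linear term over $V$ becomes $(2\gamma S/n)(1+o(1))$ while the linear term over $V^c$ expands as $-2(\gamma-\beta_n)T/n$ plus negligible corrections (the $\beta_n$ arises from the shift $d_n \ne pn$). I then apply the identity $\sum_{i<j\in U}(\beta_i-\beta_j)^2 = |U|\sum_{i\in U}\beta_i^2 - (\sum_{i\in U}\beta_i)^2$ to the two quadratic terms in the exponent; the sum-of-squares portions $\sum_{i\in V}\beta_i^2/(2n)$ and $\sum_{i\in V^c}\beta_i^2/(2n)$ combine to exactly cancel the $-\sum_{i\in W\setminus v_n}\beta_i^2/(2(n-1))$ contribution, leaving only the two squared-sum residuals $-S^2/n^2$ and $-T^2/n^2$.

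Collecting all pieces (including the term $\Sigma^2/(2n^2) - \beta_n\Sigma/n$ from Proposition~\ref{prop:graph-bounded} and the binomial contribution from the hypergeometric local limit theorem) and completing the square, the leading exponent should factor as $-\tfrac12(2\gamma - \beta_n - 2S/n)^2$, matching the claim. The main obstacle is the bookkeeping: every approximation (such as $1/h = 2/n + O(\sqrt{\log n}/n^{3/2})$, $\sqrt{n/(n-1)} = 1 + O(1/n)$, the LCLT remainders, and replacements like $S_0 \to S$) accumulates error, and the residual cross-terms $\gamma\Sigma/n$, $\beta_n\Sigma/n$, $S\Sigma/n^2$ impose the tightest constraints. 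The calibration behind the assumption $\sum_{i\in W}\beta_i = O(n^{5/6})$ is precisely to ensure that all such residuals fit within the $(1+O(n^{-1/10}))$ tolerance allowed by the proposition.
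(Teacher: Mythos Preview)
Your proposal is correct and follows essentially the same route as the paper's proof: both apply \cref{prop:graph-bounded} at $t = ph + \gamma\sqrt{p(1-p)n}$, use the identity $\sum_{i<j\in U}(\beta_i-\beta_j)^2 = |U|\sum_{i\in U}\beta_i^2 - (\sum_{i\in U}\beta_i)^2$ to cancel the $-\sum\beta_i^2/(2(n-1))$ term and leave residuals $-S^2/n^2 - T^2/n^2$, exploit $\Sigma = S+T = O(n^{5/6})$ to collapse the linear and quadratic pieces to expressions in $S$ alone, evaluate the hypergeometric prefactor via a local limit computation (the paper does this by direct Stirling expansion, arriving at the same $\exp(-(2\gamma-\beta_n)^2/2)$ you obtain), and complete the square. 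The only cosmetic difference is that the paper discards the initial $\Sigma^2/(2n^2) - \beta_n\Sigma/n$ term at the outset rather than carrying it as one of your residual cross-terms.
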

\begin{proof}
Apply \cref{prop:graph-bounded} to $t = ph + \gamma\sqrt{p(1-p)n}$. First, since $\sum_{i=1}^n\beta_i = O(n^{3/4})$ and $|\beta_i| = O((\log n)^2)$, we see that the initial exponential term is small. Next, we have from $d_n = p(n-1)+\beta_n\sqrt{p(1-p)(n-1)}$ and $|h-n/2| = O(\sqrt{n\log n})$ that
\begin{align*}
&-\sqrt{\frac{p}{1-p}}\bigg(\sum_{i\in V\setminus v_n}\bigg(1-\frac{t}{ph}\bigg)\frac{\beta_i}{\sqrt{n-1}}+\sum_{i\in V^c\setminus v_n}\bigg(1-\frac{d_n-t}{p(n-h)}\bigg)\frac{\beta_i}{\sqrt{n-1}}\bigg)\\
&= \sum_{i\in V\setminus v_n}\frac{\gamma\beta_i}{n/2} + \sum_{i\in V^c\setminus v_n}\frac{(\beta_n-\gamma)\beta_i}{n/2} + O(n^{-1/3})\\
&= \frac{2\gamma-\beta_n}{n/2}\sum_{i\in V\setminus v_n}\beta_i + O(n^{-1/8})
\end{align*}
Similarly, in the last three terms of the formula in \cref{prop:graph-bounded}, we can replace $h$ by $n/2$ and $n-1$ by $n$ in return for a negligible additive error. Therefore the terms in the exponential add up to
\begin{align*}
&\frac{2\gamma-\beta_n}{n/2}\sum_{i\in V\setminus v_n}\beta_i - \frac{1}{2}\sum_{i\in W\setminus v_n}\frac{\beta_i^2}{n} + \frac{1}{n^2}\sum_{i<j\in V\setminus v_n}(\beta_i-\beta_j)^2 + \frac{1}{n^2}\sum_{i<j\in V^c\setminus v_n}(\beta_i-\beta_j)^2 + O(n^{-1/8})\\
&= \frac{2\gamma-\beta_n}{n/2}\sum_{i\in V\setminus v_n}\beta_i + \frac{1}{n^2}\sum_{i<j\in V\setminus v_n}(-2\beta_i\beta_j) + \frac{1}{n^2}\sum_{i<j\in V^c\setminus v_n}(-2\beta_i\beta_j) + O(n^{-1/8})\\
&= \frac{2\gamma-\beta_n}{n/2}\sum_{i\in V\setminus v_n}\beta_i - \frac{1}{n^2}\bigg(\sum_{i\in V\setminus v_n}\beta_i\bigg)^2 - \frac{1}{n^2}\bigg(\sum_{i\in V^c\setminus v_n}\beta_i\bigg)^2 + O(n^{-1/8})\\
&= \frac{2\gamma-\beta_n}{n/2}\sum_{i\in V\setminus v_n}\beta_i - \frac{2}{n^2}\bigg(\sum_{i\in V\setminus v_n}\beta_i\bigg)^2 + O(n^{-1/8}).
\end{align*}
Furthermore, the ratio of binomial coefficients can be computed as follows. If $|a-pb| = O(b^{3/5})$ then by Stirling's formula,
\begin{align*}
&p^a(1-p)^{b-a}\binom{b}{a}\\
&= (1+O(b^{-1/2}))\frac{1}{\sqrt{2\pi p(1-p)b}}\Big(\frac{a}{pb}\Big)^{-a}\Big(\frac{b-a}{(1-p)b}\Big)^{-(b-a)}\\
&= (1+O(b^{-1/2}))\frac{1}{\sqrt{2\pi p(1-p)b}}\bigg(1+\frac{a-pb}{pb}\bigg)^{-a}\bigg(1-\frac{a-pb}{(1-p)b}\bigg)^{-(b-a)}\\
&= \frac{1+O(b^{-1/6})}{\sqrt{2\pi p(1-p)b}}\exp\bigg(-a\frac{a-pb}{pb}+\frac{1}{2}a\Big(\frac{a-pb}{pb}\Big)^2+(b-a)\frac{a-pb}{(1-p)b}+\frac{1}{2}(b-a)\Big(\frac{a-pb}{(1-p)b}\Big)^2\bigg)\\
&= \frac{1+O(b^{-1/6})}{\sqrt{2\pi p(1-p)b}}\exp\bigg(-a\frac{a-pb}{pb}+\frac{1}{2}pb\Big(\frac{a-pb}{pb}\Big)^2+(b-a)\frac{a-pb}{(1-p)b}+\frac{1}{2}(b-pb)\Big(\frac{a-pb}{(1-p)b}\Big)^2\bigg)\\
&= \frac{1+O(b^{-1/6})}{\sqrt{2\pi p(1-p)b}}\exp\bigg(-\frac{(a-pb)^2}{2p(1-p)b}\bigg),
\end{align*}
so that a local central limit theorem holds. This allows us to compute

\begin{align*}
&\frac{\binom{h}{t}\binom{n-h-1}{d_n-t}}{\binom{n-1}{d_n}} = \frac{p^t(1-p)^{h-t}\binom{h}{t}p^{d_n-t}(1-p)^{(n-h-1)-(d_n-t)}\binom{n-h-1}{d_n-t}}{p^{d_n}(1-p)^{(n-1)-d_n)}\binom{n-1}{d_n}}\\
&\qquad= \frac{\sqrt{2}+O(n^{-1/8})}{\sqrt{\pi p(1-p)n}}\exp\bigg(-\frac{(t-ph)^2}{2p(1-p)h}-\frac{((d_n-t)-p(n-h-1))^2}{2p(1-p)(n-h-1)}+\frac{(d_n-p(n-1))^2}{2p(1-p)(n-1)}\bigg)\\
&\qquad= \frac{\sqrt{2}+O(n^{-1/8})}{\sqrt{\pi p(1-p)n}}\exp\bigg(-\gamma^2-(\beta_n-\gamma)^2+\frac{\beta_n^2}{2}\bigg)\\
&\qquad= \frac{\sqrt{2}+O(n^{-1/8})}{\sqrt{\pi p(1-p)n}}\exp\bigg(-\frac{(2\gamma-\beta_n)^2}{2}\bigg).
\end{align*}
Putting it all together in \cref{prop:graph-bounded} we obtain the result, noting that $\beta_n$ is small so a difference of $\beta_n/(n/2)$ is negligible in the final formula.
\end{proof}

\subsection{Bigraph model}\label{sub:enum-bigraph-model}
Now we compute the probability of having certain neighborhood sizes in a degree-constrained model of bipartite graphs. This time we use \cite{CGM08} to derive the necessary initial probability computation.

\begin{proposition}\label{prop:bigraph-expectation}
There are $\epsilon_{\ref{prop:bigraph-expectation}}, C_{\ref{prop:bigraph-expectation}} > 0$ so the following holds. Let $n\ge C_{\ref{prop:bigraph-expectation}}$ and $(\log n)^{-1/4}\le p\le 1-(\log n)^{-1/4}$. Suppose $(\log n)^{-1/4}\le m/n\le(\log n)^{1/4}$. Let $(\mbf{s},\mbf{t})\in E_{m,n}$ (so $\mbf{s}$ has length $m$) such that each $s_w=pn+O(\sqrt{p(1-p)}n^{1/2+\epsilon_{\ref{prop:bigraph-expectation}}})$ and each $t_w = pm + O(\sqrt{p(1-p)}n^{1/2+\epsilon_{\ref{prop:bigraph-expectation}}}$. Let $G$ be a uniformly random bigraph on vertex sets $W' = \{v_1',\ldots,v_m'\}$ and $W = \{v_1,\ldots,v_n\}$ with these degree sequences between the parts. Consider a size $h$ subset $V\subseteq W$ satisfying $\min(h,n-h)\ge n/(\log n)^{1/8}$, and an integer $t\in[0,s_m]$. For $w\in W$, define $\beta_w$ by $t_w = pm+\beta_w\sqrt{p(1-p)m}$. Let $s_m = pn+\alpha\sqrt{p(1-p)n}$. Then
\begin{align*}
&\mb{P}[\deg_V(v_m') = t]\\
&= (1+O(n^{-1/8}))\exp\bigg(\frac{(\sum_{i=1}^n\beta_i)(\sum_{i=1}^n\beta_i-2\sqrt{mn}\alpha)}{2mn}\bigg)\frac{\binom{h}{t}\binom{n-h}{s_m-t}}{\binom{n}{s_m}}\times\\
&\mb{E}_{\substack{S_1\sim\binom{V}{t}\\S_2\sim\binom{W\setminus V}{s_m-t}\\S=S_1\cup S_2}}\exp\bigg(-\sqrt{\frac{p}{1-p}}\sum_{i\in W}\bigg(-\frac{1-p}{p}\bigg)^{\mbm{1}_S(i)}\frac{\beta_i}{\sqrt{m}}-\frac{1}{2}\sum_{i\in W}\bigg(\frac{1-p}{p}\bigg)^{2\mbm{1}_S(i)-1}\frac{\beta_i^2}{m}\bigg).
\end{align*}
Here $S_1,S_2$ are uniform over their respective domains.
\end{proposition}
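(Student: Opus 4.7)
The proof mirrors that of \cref{prop:graph-expectation} but substitutes the bipartite graph enumeration formula of Canfield, Greenhill, and McKay \cite{CGM08} for the McKay--Wormald formula \cite{MW90}. Write $B(\mbf{s},\mbf{t})$ for the number of bipartite graphs with bipartition $(W',W)$ having the prescribed degree sequence. By first conditioning on the neighborhood of $v_m'$ and then averaging,
\[
\mb{P}[\deg_V(v_m') = t] = \sum_{\substack{S_1\subseteq V,\,|S_1|=t\\ S_2\subseteq W\setminus V,\,|S_2|=s_m-t}} \frac{B(\mbf{s}_{-m},\,\mbf{t}-\mbm{1}_{S_1\cup S_2})}{B(\mbf{s},\mbf{t})},
\]
where $\mbf{s}_{-m}$ drops the last coordinate of $\mbf{s}$ and $\mbf{t}-\mbm{1}_S$ decrements the $i$-th entry of $\mbf{t}$ by one for each $i\in S$. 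The hypotheses on $(\mbf{s},\mbf{t})$ (together with $\epsilon_{\ref{prop:bigraph-expectation}}$ chosen small) place all degree sequences encountered in the range of validity of \cite{CGM08}, with the named error $O(n^{-1/8})$ absorbing its relative error.

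The next step is to apply the CGM asymptotic to both numerator and denominator. That formula expresses $B(\mbf{s},\mbf{t})$ as a leading combinatorial factor of the form $\binom{mn}{M}^{-1}\prod_i\binom{n}{s_i}\prod_j\binom{m}{t_j}$ (where $M=\sum_i s_i=\sum_j t_j$) multiplied by an exponential correction that is quadratic in the normalized deviations of $\mbf{s},\mbf{t}$. Taking the ratio causes almost all factors to telescope: the $\binom{n}{s_i}$ for $i<m$ cancel entirely, only $\binom{n}{s_m}^{-1}$ remains from the $\mbf{s}$ side, and the $\binom{m}{t_j}$ factors leave a product $\prod_{j\in S_1\cup S_2}(t_j/(m-t_j+1))$. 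Expanding this product via $t_j=pm+\beta_j\sqrt{p(1-p)m}$ and $\log(1+x)=x-x^2/2+O(x^3)$ produces precisely the $\mbm{1}_S(i)$--weighted linear and quadratic exponential appearing inside the expectation.

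The remaining bookkeeping concerns the CGM exponential corrections and the $\binom{mn}{M}$ ratio. The shift $M\mapsto M-s_m$ in the overall binomial, combined with the change in the quadratic correction upon replacing $(\mbf{s},\mbf{t})$ by $(\mbf{s}_{-m},\mbf{t}-\mbm{1}_{S_1\cup S_2})$, produces the explicit global prefactor
\[
\exp\bigg(\frac{(\sum_{i=1}^n\beta_i)(\sum_{i=1}^n\beta_i-2\sqrt{mn}\alpha)}{2mn}\bigg),
\]
which is the bipartite analogue of the term $(\sum\beta_i)(\sum\beta_i-2n\beta_n)/(2n^2)$ in \cref{prop:graph-expectation}. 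The leftover combinatorial dependence on $t$ combines with the summation $\sum_{S_1,S_2}$ to yield $\binom{h}{t}\binom{n-h}{s_m-t}/\binom{n}{s_m}$ times the expectation over uniform $S_1\sim\binom{V}{t}$ and $S_2\sim\binom{W\setminus V}{s_m-t}$ claimed in the statement; all third-order Taylor remainders, together with the intrinsic $O(n^{-1/8})$ from CGM, are absorbed into the multiplicative error using $\beta_i=O(n^{\epsilon_{\ref{prop:bigraph-expectation}}})$ and $|\alpha|=O(n^{\epsilon_{\ref{prop:bigraph-expectation}}})$.

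The main obstacle is the careful bookkeeping of the quadratic correction. In the graph case, all degree deviations live on the common scale $\sqrt{p(1-p)(n-1)}$, so the cross terms combine cleanly; in the bipartite case, the $\mbf{s}$--deviations and $\mbf{t}$--deviations live on scales $\sqrt{p(1-p)n}$ and $\sqrt{p(1-p)m}$ respectively, which is the reason the global prefactor involves $\sqrt{mn}\alpha$ rather than a symmetric expression. Verifying that after telescoping, every cross-scale contribution assembles exactly into the stated prefactor plus the claimed expectation (with no leftover piece exceeding $O(n^{-1/8})$) is the sole nontrivial computation; this is routine but must be done with care because the hypothesis $(\log n)^{-1/4}\le m/n\le(\log n)^{1/4}$ only guarantees that $m$ and $n$ are comparable up to polylogarithmic factors, so one cannot casually replace $\sqrt{mn}$ by either $m$ or $n$.
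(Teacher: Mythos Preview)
Your plan matches the paper's proof: condition on the neighborhood $S$ of $v_m'$, take the ratio of CGM counts, telescope, and expand $t_j = pm + \beta_j\sqrt{p(1-p)m}$ in the surviving product. One correction on where the work lies: the CGM exponential factor $\exp\big(-\tfrac{1}{2}(1-\gamma_2(s)^2/\mu(1-\mu))(1-\gamma_2(t)^2/\mu(1-\mu))\big)$ does \emph{not} contribute to the global prefactor. Since $\gamma_2^2(s'),\gamma_2^2(t_S),\mu_S$ differ from $\gamma_2^2(s),\gamma_2^2(t),\mu$ only by $O(n^{-1/4})$, the ratio of these exponentials is $1+O(n^{-1/8})$ and is absorbed into the multiplicative error. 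The entire prefactor $\exp\big((\sum\beta_i)(\sum\beta_i-2\sqrt{mn}\alpha)/(2mn)\big)$ comes solely from the ratio $\binom{(m-1)n}{r-s_m}^{-1}\big/\binom{mn}{r}^{-1}$ (after inserting $p^{s_m}(1-p)^{n-s_m}$), and this is exactly the content of \cref{lem:binomial-approximation}; that lemma, not cross-scale bookkeeping in the quadratic correction, is the one nontrivial computation. Also, the product left by the $\binom{m}{t_j}$ factors is $\prod_{j\in S}(t_j/m)\prod_{j\notin S}((m-t_j)/m)$ rather than $\prod_{j\in S}t_j/(m-t_j+1)$; expanding this via $\log(1+x)=x-x^2/2+O(x^3)$ gives the claimed exponential directly.
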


As in \cref{sub:enum-graph-model}, there are various corollaries of this fact by computing out what the expectation term yields. The proofs are exactly analogous to the ones given before and consist of routine computation given those ideas. Therefore, we leave out the proofs and merely record the necessary results.

\begin{proposition}\label{prop:bigraph-bounded}
Assume the hypotheses of \cref{prop:graph-expectation}. Assume additionally that $\beta_w = O((\log n)^2)$ for all $w\in W$ and $\alpha = O((\log n)^2)$. Then if $|t-ph|\ge n^{1/2}(\log n)^5$ we have
\[\mb{P}[\deg_V(v_m') = t]\le\exp(-\Omega((t-ph)^2/n)).\]
If $|t-ph|\le n^{3/5}$ and furthermore $(n/m)\cdot(\sum_{i=1}^n\beta_i^2/n)\le(\log n)^{1/9}$ then we have
\begin{align*}
&\mb{P}[\deg_V(v_m') = t]\\
&= (1+O(n^{-1/10}))\frac{\binom{h}{t}\binom{n-h}{s_m-t}}{\binom{n}{s_m}}\exp\bigg[\frac{(\sum_{i=1}^n\beta_i)(\sum_{i=1}^n\beta_i-2\sqrt{mn}\alpha)}{2mn}\\
&-\sqrt{\frac{p}{1-p}}\bigg(\sum_{i\in V}\bigg(1-\frac{t}{ph}\bigg)\frac{\beta_i}{\sqrt{m}}+\sum_{i\in W\setminus V}\bigg(1-\frac{s_m-t}{p(n-h)}\bigg)\frac{\beta_i}{\sqrt{m}}\bigg)-\frac{1}{2}\sum_{i\in W}\frac{\beta_i^2}{m}\\
&+\frac{1}{2mh}\sum_{i < j\in V}(\beta_i-\beta_j)^2 + \frac{1}{2m(n-h)}\sum_{i < j\in V^c}(\beta_i-\beta_j)^2\bigg].
\end{align*}
\end{proposition}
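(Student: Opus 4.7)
The plan is to mirror the proof of \cref{prop:graph-bounded} almost verbatim, starting from the bipartite counting identity \cref{prop:bigraph-expectation} in place of \cref{prop:graph-expectation}. I would set
\[X = -\sqrt{\frac{p}{1-p}}\sum_{i\in W}\bigg(-\frac{1-p}{p}\bigg)^{\mbm{1}_S(i)}\frac{\beta_i}{\sqrt{m}}-\frac{1}{2}\sum_{i\in W}\bigg(\frac{1-p}{p}\bigg)^{2\mbm{1}_S(i)-1}\frac{\beta_i^2}{m},\]
where $S=S_1\cup S_2$ with $S_1\sim\binom{V}{t}$ and $S_2\sim\binom{W\setminus V}{s_m-t}$ independent uniform, so that \cref{prop:bigraph-expectation} reads $\mb{P}[\deg_V(v_m')=t] = (\text{explicit prefactor})\cdot\mb{E}e^X$.

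Next, I would expand $\mb{E}X$ by replacing each indicator $\mbm{1}_S(i)$ by its marginal ($t/h$ if $i\in V$, $(s_m-t)/(n-h)$ if $i\in W\setminus V$). Collecting terms and noting that $|\beta_i|=O((\log n)^2)$ gives
\begin{align*}
\mb{E}X = -\sqrt{\frac{p}{1-p}}&\bigg(\sum_{i\in V}\Big(1-\tfrac{t}{ph}\Big)\tfrac{\beta_i}{\sqrt m}+\sum_{i\in W\setminus V}\Big(1-\tfrac{s_m-t}{p(n-h)}\Big)\tfrac{\beta_i}{\sqrt m}\bigg)\\
&-\frac{1}{2}\sum_{i\in W}\frac{\beta_i^2}{m}+O\bigl(n^{-1/3}+|t-ph|(\log n)^2/n\bigr),
\end{align*}
exactly as in the unipartite case (with $\sqrt{m}$ replacing $\sqrt{n-1}$, $s_m$ replacing $d_n$, and no $v_n$ to excise). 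The tail case $|t-ph|\ge n^{1/2}(\log n)^5$ follows immediately: the ratio $\binom{h}{t}\binom{n-h}{s_m-t}/\binom{n}{s_m}$ is bounded by $\exp(-\Omega((t-ph)^2/n))$ by the hypergeometric tail bound (e.g.\ \cite[Theorem~2.10]{JLR00}), the initial exponential in $\alpha$ and $\sum\beta_i$ is $\exp(O((\log n)^4))$, and $\mb{E}e^X\le\exp(\mb{E}X+O((\log n)^4))$ via \cref{lem:slice-subgaussian} applied to the slice with coefficients of size $O((\log n)^2/\sqrt m)$; these three factors together give the claimed bound since the linear term in $\mb{E}X$ is absorbed by $(t-ph)^2/n$.

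In the main regime $|t-ph|\le n^{3/5}$, I would compute
\[\on{Var}X = \frac{1}{mh}\sum_{i<j\in V}(\beta_i-\beta_j)^2 + \frac{1}{m(n-h)}\sum_{i<j\in W\setminus V}(\beta_i-\beta_j)^2+O(n^{-1/4}),\]
using the same slice-variance formula as in \cref{prop:graph-bounded} together with $t=ph+O(n^{3/5})$ and $s_m=pn+O(\sqrt{p(1-p)n}(\log n)^2)$, and then apply \cref{lem:slice-moment} independently to the two slices $S_1,S_2$ to conclude $\mb{E}e^X=\exp(\mb{E}X+\tfrac{1}{2}\on{Var}X+O(n^{-1/10}))$. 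Combining with \cref{prop:bigraph-expectation} and recognizing the resulting expression gives the displayed formula.

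The main obstacle, and the only place where the bipartite structure truly differs, is verifying the applicability of \cref{lem:slice-moment}: one must check $\eta^2\le\sqrt{\log n}$ for the relevant coefficients $a_i\sim\beta_i/\sqrt m$. Since the sum runs over all $n$ vertices of $W$ but the normalization is $\sqrt m$, the coefficient variance scales like $\sum_i\beta_i^2/m\lesssim (n/m)\cdot(\sum_i\beta_i^2/n)\cdot(n/\min(h,n-h))$, and it is precisely the extra factor $n/m$ appearing in the added hypothesis $(n/m)(\sum_i\beta_i^2/n)\le(\log n)^{1/9}$ (together with $\min(h,n-h)\ge n/(\log n)^{1/8}$) that delivers $\eta^2\le\sqrt{\log n}$ with room to spare. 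Once this bookkeeping is in hand, the remaining simplifications are routine adaptations of the graph case and need no new ideas.
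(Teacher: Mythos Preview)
Your proposal is correct and is precisely the argument the paper has in mind: the paper explicitly omits the proof of \cref{prop:bigraph-bounded}, stating that ``the proofs are exactly analogous to the ones given before and consist of routine computation given those ideas,'' and your write-up carries out that analogy faithfully, including the one genuinely new bookkeeping point (the extra $n/m$ factor needed to verify the $\eta^2\le\sqrt{\log n}$ hypothesis of \cref{lem:slice-moment}).
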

\begin{remark}
Note that in a random bipartite graph with part sizes $m$ and $n$ the condition on $\sum_{i=1}^n\beta_i^2/n$ can only hold when $m$ and $n$ are within a small power of $\log n$ factor.
\end{remark}

\begin{proposition}\label{prop:bigraph-balanced}
Assume the hypotheses of the second part of \cref{prop:bigraph-bounded}. Assume additionally that $|h-n/2| = O(\sqrt{n\log n})$ and $\sum_{i=1}^n\beta_i = O(n^{5/6})$. Then for $|\gamma|\le n^{1/10}$ with $ph+\gamma\sqrt{p(1-p)n}\in\mb{Z}$, we have
\[\mb{P}[\deg_V(v_m') = ph+\gamma\sqrt{p(1-p)n}] = \frac{\sqrt{2}+O(n^{-1/10})}{\sqrt{\pi p(1-p)n}} \exp\bigg(-\frac{1}{2}\bigg(2\gamma-\alpha-\frac{\sum_{i\in V}\beta_i}{\sqrt{mn}/2}\bigg)^2\bigg).\]
\end{proposition}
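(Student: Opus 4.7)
The plan is to apply the second part of Proposition \ref{prop:bigraph-bounded} to $t = ph + \gamma\sqrt{p(1-p)n}$ (noting that $|\gamma|\le n^{1/10}$ forces $|t-ph| = O(n^{3/5})$) and then simplify the resulting exponent to the claimed Gaussian form, just as in the proof of Proposition \ref{prop:graph-balanced}. The prefactor $\exp\bigl((\sum_i\beta_i)(\sum_i\beta_i - 2\sqrt{mn}\alpha)/(2mn)\bigr)$ is $1 + O(n^{-1/6}(\log n)^2)$ by $\sum_i\beta_i = O(n^{5/6})$, $\alpha = O((\log n)^2)$, and $mn \sim n^2$, so it is absorbed into the stated error.

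For the linear-in-$\beta$ term, I would write $s_m - t = p(n-h) + (\alpha-\gamma)\sqrt{p(1-p)n}$; using $h = n/2 + O(\sqrt{n\log n})$, the factors $(1 - t/(ph))$ and $(1 - (s_m-t)/(p(n-h)))$ reduce to $-2\gamma\sqrt{(1-p)/(pn)}$ and $-2(\alpha-\gamma)\sqrt{(1-p)/(pn)}$ respectively, up to negligible corrections. Multiplying by $-\sqrt{p/(1-p)}/\sqrt{m}$, summing, and splitting $\sum_{W\setminus V} = \sum_W - \sum_V$ yields $(4\gamma - 2\alpha)\sum_{i\in V}\beta_i/\sqrt{mn}$ plus $2(\alpha-\gamma)\sum_{i\in W}\beta_i/\sqrt{mn} = O(n^{1/10}\cdot n^{5/6}/n) = O(n^{-1/15})$.

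For the quadratic correction I would use the identity $\sum_{i<j\in V}(\beta_i-\beta_j)^2 = |V|\sum_V\beta_i^2 - (\sum_V\beta_i)^2$. With $|V|,|V^c| = n/2 + O(\sqrt{n\log n})$, the diagonal $\sum\beta_i^2$ contributions cancel against $-\tfrac{1}{2m}\sum_W\beta_i^2$ up to an $O(n^{-1/2}(\log n)^{O(1)})$ imbalance error (controlled by the assumption $(n/m)\sum\beta_i^2/n\le(\log n)^{1/9}$), and $\sum_{V^c}\beta_i = -\sum_V\beta_i + O(n^{5/6})$ lets me merge the two surviving square terms into $-2(\sum_V\beta_i)^2/(mn) + O(n^{-1/6})$.

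Finally the binomial ratio $\binom{h}{t}\binom{n-h}{s_m-t}/\binom{n}{s_m}$ is evaluated by Stirling exactly as in Proposition \ref{prop:graph-balanced}: the normalization gives $(\sqrt{2}+O(n^{-1/8}))/\sqrt{\pi p(1-p)n}$, and the three Gaussian exponents combine, using $t-ph = \gamma\sqrt{p(1-p)n}$, $(s_m-t)-p(n-h) = (\alpha-\gamma)\sqrt{p(1-p)n}$, and $s_m - pn = \alpha\sqrt{p(1-p)n}$, to $-\gamma^2 - (\alpha-\gamma)^2 + \alpha^2/2 = -\tfrac{1}{2}(2\gamma-\alpha)^2$. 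Setting $A := 2\gamma - \alpha$ and $B := \sum_{i\in V}\beta_i/(\sqrt{mn}/2)$, the total exponent assembles as $-\tfrac{1}{2}A^2 + AB - \tfrac{1}{2}B^2 = -\tfrac{1}{2}(A-B)^2$, which is the claimed expression. The main obstacle is purely bookkeeping: one must check that each of the several error sources (from $h-n/2$, from $\sum_W\beta_i\ne 0$, from $\alpha\ne 0$ in the prefactor, and from local-CLT remainders) is bounded by $O(n^{-1/10})$ after being pulled outside the exponential, with the exponents $1/6,1/8,1/10,5/6$ in the hypotheses arranged precisely to meet this threshold.
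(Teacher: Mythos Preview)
Your proposal is correct and follows precisely the route the paper intends: the paper omits the proof of this proposition, stating only that it is ``exactly analogous'' to the proof of \cref{prop:graph-balanced}, and your argument is that analogy carried out in detail (apply \cref{prop:bigraph-bounded}, kill the prefactor via $\sum_i\beta_i=O(n^{5/6})$, reduce the linear and quadratic $\beta$-terms using $h\approx n/2$ and $\sum_W\beta_i$ small, and evaluate the hypergeometric ratio by Stirling). One small wording point: the diagonal $\sum\beta_i^2$ pieces actually cancel \emph{exactly} against $-\tfrac{1}{2m}\sum_W\beta_i^2$ once you use $|V|=h$, $|V^c|=n-h$; the $O(n^{-1/2}(\log n)^{O(1)})$ error you mention enters only when replacing $h$ and $n-h$ by $n/2$ in the surviving $(\sum_V\beta_i)^2$ and $(\sum_{V^c}\beta_i)^2$ denominators.
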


\subsection{Computational preliminaries}\label{sub:enum-computations}
Now we turn to justifying \cref{prop:graph-expectation,prop:bigraph-expectation}. We first record the graph and bigraph enumeration results that will be used.
\begin{theorem}[{\cite{MW90}}]\label{thm:enum-graph}
There exists a fixed constant $\varepsilon>0$ such that the following holds. Consider a degree sequence $\mbf{d} = (d_1, \dots, d_n)$ of length $n$ such that each $|d_i-\ol{d}|\le n^{1/2+\varepsilon}$, where $\ol{d}=(1/n)\sum_{i=1}^n d_i$ satisfies $\ol{d}\ge n/\log n$. Letting $r = \ol{d}n/2\in\mb{Z}$, $\mu = \ol{d}/(n-1)$, and $\gamma_2^2 = (1/(n-1)^2)\sum_{i=1}^n(d_i-\ol{d})^2$, the number of labelled graphs with degree sequence $\mbf{d}$ is
\[(1+O(n^{-1/4}))\exp\left(\frac{1}{4}-\frac{\gamma_2^2}{4\mu^2(1-\mu)^2}\right)\binom{n(n-1)/2}{r}\binom{n(n-1)}{2r}^{-1}\prod_{i=1}^n\binom{n-1}{d_i}.\]
\end{theorem}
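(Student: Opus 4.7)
The plan is to prove the enumeration formula via saddle-point analysis of the Cauchy integral representation of the coefficient-extraction formula. Starting from
\[
N(\mbf{d}) = [x_1^{d_1}\cdots x_n^{d_n}] \prod_{1\le i<j\le n}(1+x_ix_j),
\]
I parametrize $x_k = r e^{\mbf{i}\theta_k}$ with $r^2 = \mu/(1-\mu)$, the natural saddle radius at which $r^2/(1+r^2)=\mu$ matches the average edge density. Cauchy's formula yields
\[
N(\mbf{d}) = \frac{r^{-2r}}{(2\pi)^n}\int_{[-\pi,\pi]^n} \prod_{i<j}\bigl(1+r^2 e^{\mbf{i}(\theta_i+\theta_j)}\bigr)\prod_k e^{-\mbf{i}d_k\theta_k}\,d\theta,
\]
so the goal is to extract the leading contributions from neighborhoods of the critical points.

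Two critical points of the modulus contribute: $\theta = 0$ and $\theta = \pi\mbf{1}$. These have equal magnitudes, and the linear factor $\prod e^{-\mbf{i}d_k\theta_k}$ at $\theta = \pi\mbf{1}$ produces $(-1)^{\sum d_k}$, giving constructive addition when $\sum d_i$ is even (producing the necessary factor of $2$) and enforcing $N(\mbf{d})=0$ when $\sum d_i$ is odd. Around $\theta=0$, I Taylor-expand the log-integrand to fourth order. The linear term combines with $-\mbf{i}d_k\theta_k$ into $\mbf{i}(\ol{d}-d_k)\theta_k$. The quadratic term yields $-\tfrac12\theta^\top H\theta$ with $H = \mu(1-\mu)[(n-2)I+J]$, whose inverse is computable by Sherman--Morrison (and whose rank-one component is annihilated in the shift direction because $\sum(d_k-\ol{d})=0$). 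Performing the Gaussian integral with this linear shift produces the dominant exponent, and the leading prefactors can then be matched against $\prod\binom{n-1}{d_k}$, $\binom{\binom{n}{2}}{r}$, and $\binom{n(n-1)}{2r}$ via Stirling; a natural bookkeeping is to pull out $\prod\binom{n-1}{d_k}$ first, which absorbs most of the Gaussian dependence on $(d_k-\ol{d})^2$. The residual $e^{1/4}$ factor and the correction $\exp(-\gamma_2^2/(4\mu^2(1-\mu)^2))$ come from the cubic and quartic Taylor contributions — more precisely, from $\mb{E}[T_4]$ and $\tfrac12\mb{E}[T_3^2]$ — which are of $\Theta(1)$ size since, e.g., $T_4 \sim \binom{n}{2}\cdot(\theta_i+\theta_j)^4$ with $\theta_k \sim n^{-1/2}$.

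The main obstacle is the tail bound: showing the integrand is negligible outside small neighborhoods of the two saddles. Since $|1+r^2 e^{\mbf{i}\phi}|^2 = (1+r^2)^2 - 4r^2 \sin^2(\phi/2)$, decay occurs only in the phases $\theta_i+\theta_j$, never in individual $\theta_k$. One must handle several regimes separately: (i) configurations where a constant fraction of $|\theta_i+\theta_j|$ is of constant order, yielding exponential decay from the product of edge factors; (ii) clustered configurations where most $\theta_k$ lie near $0$ or $\pi$ modulo a common shift, making most phases $\theta_i+\theta_j$ small and requiring the factor $\prod e^{-\mbf{i}d_k\theta_k}$ to provide oscillatory cancellation against the $d_k$; and (iii) anti-correlated arrangements with $\theta_i\approx -\theta_j$ for many pairs, which must be shown to occupy small measure via a combinatorial packing argument. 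These tail estimates are the technical heart of the argument and parallel, in a Fourier-analytic guise, the case analyses inherent in the switching-method proof of McKay--Wormald.
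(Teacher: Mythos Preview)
This theorem is not proved in the present paper; it is quoted from McKay--Wormald \cite{MW90} and used as a black box (see the proofs of \cref{prop:graph-expectation,prop:bigraph-expectation}). There is therefore no proof here to compare against.

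That said, your outline is essentially the method of the original paper \cite{MW90}: a multidimensional saddle-point analysis of the coefficient-extraction integral, with the two saddles at $\theta=0$ and $\theta=\pi\mbf{1}$ accounting for the even-sum constraint, a Gaussian approximation near the saddles with Hessian $\mu(1-\mu)\bigl((n-2)I+J\bigr)$, and the cubic and quartic Taylor corrections producing the factor $\exp\bigl(\tfrac14-\gamma_2^2/(4\mu^2(1-\mu)^2)\bigr)$. Your identification of the linear shift $\mbf{i}(\ol{d}-d_k)\theta_k$ and the Hessian structure is correct, and the regime decomposition you propose for the tail has the right shape.

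One correction: your closing sentence refers to ``the switching-method proof of McKay--Wormald.'' The paper \cite{MW90} for near-regular dense degree sequences does \emph{not} use switchings; it is entirely complex-analytic. Switchings appear in their earlier work on sparse sequences. The tail bounds in \cite{MW90} are obtained by direct pointwise estimates on $\prod_{i<j}|1+r^2e^{\mbf{i}(\theta_i+\theta_j)}|$ away from the saddles, partitioning $[-\pi,\pi]^n$ according to how many coordinates lie near $\{0,\pi\}$; in particular, the oscillatory cancellation you invoke in regime (ii) is not how the argument actually proceeds --- the decay there still comes from the modulus, exploiting that if most $\theta_k$ cluster near a common value $c\notin\{0,\pi\}$ then most phases $\theta_i+\theta_j\approx 2c$ are bounded away from $0$ modulo $2\pi$.
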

\begin{theorem}[{\cite{CGM08}}]\label{thm:enum-bigraph}
There exists a fixed constant $\varepsilon>0$ such that the following holds. For a pair of integers $n,m \in \mb{N}$ with $n/(\log n)^{1/2}\le m\le n(\log n)^{1/2}$, fix a pair of degree sequences $\mbf{s} = (s_1, \dots, s_n), \mbf{t} = (t_1, \dots, t_m)$ such that each $|s_i-\ol{s}|\le n^{1/2+\varepsilon}$ and $|t_i-\ol{t}| \le m^{1/2+\varepsilon}$, where $\ol{s}=(1/n)\sum_{i=1}^n s_i$ and $\ol{t}=(1/m)\sum_{i=1}^m t_i$ satisfy $\ol{s}\ge n/(\log n)^{1/2}$ and $\ol{t}\ge m/(\log m)^{1/2}$. Let $\gamma_2(s)^2 = (1/n^2)\sum_{i=1}^n(s_i-\ol{s})^2$, $\gamma_2(t)^2 = (1/m^2)\sum_{i=1}^m(t_i-\ol{t})^2$ and $\mu = \sum_{i=1}^n s_i/(mn) = \sum_{i=1}^m t_i/(mn)$. Let $r = \mu mn$. Then the number of labelled bipartite graphs whose partition classes have degree sequences $\mbf{s}$ and $\mbf{t}$ is
\[(1+O(n^{-1/8}))\exp\left(-\frac{1}{2}\left(1-\frac{\gamma_2(s)^2}{\mu(1-\mu)}\right)\left(1-\frac{\gamma_2(t)^2}{\mu(1-\mu)}\right)\right)\binom{mn}{r}^{-1}\prod_{i=1}^n\binom{m}{s_i}\prod_{i=1}^m\binom{n}{t_i}.\]
\end{theorem}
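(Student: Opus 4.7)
The plan is to apply the switching method of McKay and Wormald, adapted to the bipartite setting, by anchoring the count at a nearly-regular ``base'' degree sequence and traversing to the target via local modifications.

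For the base, take all $s_i\in\{\lfloor\bar s\rfloor,\lceil\bar s\rceil\}$ and similarly for $\mbf t$. Here the configuration (stub-pairing) model gives a direct count: attach $s_i$ stubs to each left-vertex and $t_j$ stubs to each right-vertex, sample a uniformly random perfect matching between the two stub-sets, and retain only simple bigraphs. Each simple bigraph is produced by $\prod_i s_i!\prod_j t_j!$ of the $r!$ pairings, so
\[
N(\mbf s,\mbf t) \;=\; \frac{r!}{\prod_i s_i!\prod_j t_j!}\cdot\mathbb{P}[\text{pairing is simple}].
\]
The prefactor rearranges to $\binom{mn}{r}^{-1}\prod_i\binom{m}{s_i}\prod_j\binom{n}{t_j}$, matching the combinatorial skeleton of the target formula. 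The correction is then estimated via a Poisson approximation to the number of multi-edges (bipartiteness excludes loops); the expected number of multi-edges factorizes as a product of a left-side statistic in $\gamma_2(s)^2$ and a right-side statistic in $\gamma_2(t)^2$, producing the exponential factor $\exp(-\tfrac12(1-\gamma_2(s)^2/(\mu(1-\mu)))(1-\gamma_2(t)^2/(\mu(1-\mu))))$ with exactly the product structure in the statement.

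To extend to arbitrary degree sequences within $|s_i-\bar s|\le n^{1/2+\varepsilon}$ (and analogously for $\mbf t$), I would introduce a switching operation that deletes an edge $(v_i',v_j)$ together with a non-edge partner at $(v_{i'}',v_{j'})$ and inserts the ``crossed'' pair $(v_i',v_{j'}),(v_{i'}',v_j)$, thereby shifting $\mbf s$ by $e_{i'}-e_i$ while keeping $\mbf t$ fixed. Double-counting these switchings across bigraphs with $(\mbf s,\mbf t)$ and $(\mbf s+e_{i'}-e_i,\mbf t)$ yields an asymptotic ratio $N(\mbf s+e_{i'}-e_i,\mbf t)/N(\mbf s,\mbf t)$; iterating along a path from the base sequence to the target and telescoping recovers $N(\mbf s,\mbf t)$ in closed form. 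An analogous family of switchings handles $\mbf t$.

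The main obstacle is uniform control of the accumulated error along this path. Each individual switching estimate carries a multiplicative error, and these can easily compound past the target $1+O(n^{-1/8})$ without care; moreover, the bipartite setting requires varying two degree sequences jointly while the exponential correction couples their variances multiplicatively, so naive independent treatment of the two sides fails to reproduce the product structure in the exponent. The critical technical maneuvers — essentially the content of CGM08 — are: batching switchings into blocks whose net effect admits a discrete-Taylor-type identity with cancellation; choosing interpolation paths that stay as close to regular as possible until the very end so each step has a strong leading term; and tracking second-order corrections in $\mbf s$ and $\mbf t$ simultaneously, which is what produces the coupled product $(1-\gamma_2(s)^2/(\mu(1-\mu)))(1-\gamma_2(t)^2/(\mu(1-\mu)))$ in the final exponential rather than a sum.
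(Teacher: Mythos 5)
This statement is not proved in the paper at all: it is quoted verbatim (up to notation) from Canfield--Greenhill--McKay \cite{CGM08}, so the only question is whether your sketch could actually establish it. It could not, because the tools you invoke are the sparse-regime ones, while the theorem lives squarely in the dense regime: $\ol{s}\ge n/(\log n)^{1/2}$ and $m\le n(\log n)^{1/2}$ force the edge density $\mu$ to be at least of order $1/\log n$, so the degrees are linear in $n$ up to polylogarithmic factors. In that regime the configuration-model step collapses. First, the algebra is wrong: $r!/\bigl(\prod_i s_i!\prod_j t_j!\bigr)$ does \emph{not} rearrange to $\binom{mn}{r}^{-1}\prod_i\binom{m}{s_i}\prod_j\binom{n}{t_j}$; already for the $d$-regular bipartite case with $d=n/2$ the two expressions differ by a factor $e^{\Theta(n^2)}$. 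The stated formula is the ``independent-edge/binomial'' heuristic appropriate for dense graphs, not the pairing-model heuristic. Second, the Poisson correction for multi-edges is invalid here: the expected number of repeated pairs in a uniform stub-matching is of order $\ol{s}\,\ol{t}=\Theta(n^2/\mathrm{polylog}\,n)$, so the probability of simplicity is superexponentially small and in no way approximated by a Poisson computation, and it certainly does not produce the coupled factor $\exp\bigl(-\tfrac12(1-\gamma_2(s)^2/(\mu(1-\mu)))(1-\gamma_2(t)^2/(\mu(1-\mu)))\bigr)$.

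The switching phase has the same problem. To move from a near-regular base sequence to a target with $|s_i-\ol{s}|$ as large as $n^{1/2+\varepsilon}$ you need $\Theta(n^{3/2+\varepsilon})$ elementary switchings, and each ratio estimate in the dense regime carries relative error at best of order $1/\mathrm{poly}(n)$ with constants you cannot make cancel by the vague ``batching with discrete-Taylor cancellation'' you describe; the accumulated multiplicative error would swamp the claimed $1+O(n^{-1/8})$. This difficulty is not the ``content of CGM08'': their proof does not use switchings at all. It extracts the coefficient of $\prod_i x_i^{s_i}\prod_j y_j^{t_j}$ from $\prod_{i,j}(1+x_iy_j)$ via an $(m+n)$-dimensional Cauchy integral and a careful multidimensional saddle-point analysis (as in McKay--Wormald's dense graph enumeration), which is precisely where the binomial-type main term and the product of the two variance corrections come from. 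If you want to prove the theorem rather than cite it, that complex-analytic route (or a self-contained adaptation of it) is the missing ingredient; the configuration-plus-switching plan cannot be repaired in this density range.
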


We next compute a certain ratio of binomials that will show up when computing probabilities via graph enumeration. An analogous result for $p = 1/2$ was shown in \cite[Lemma~B.3]{FKNSS21}.
\begin{lemma}\label{lem:binomial-approximation}
Suppose that $n/(\log n)^{1/2}\le m\le n(\log n)^{1/2}$ and $(\log n)^{-1/4}\le p\le 1-(\log n)^{-1/4}$.
\begin{itemize}
    \item If $r = pmn + \Delta_1$ and $d = pm + \Delta_2$ where $\Delta_1 = O(n^{8/5})$ and $\Delta_2 = O(n^{3/5})$, then
    \[\frac{\binom{m(n-1)}{r-d}^{-1}}{\binom{mn}{r}^{-1}}p^d(1-p)^{m-d} = \exp\bigg(\frac{\Delta_1(\Delta_1-2n\Delta_2)}{2p(1-p)mn^2}+O(n^{-1/6})\bigg).\]
    \item If $r = p\binom{n}{2}+\Delta_1$ and $d = p(n-1) + \Delta_2$ where $\Delta_1 = O(n^{8/5})$ and $\Delta_2 = O(n^{3/5})$, then
    \[\frac{\binom{(n-1)(n-2)/2}{r-d}\binom{(n-1)(n-2)}{2r-2d}^{-1}}{\binom{n(n-1)/2}{r}\binom{n(n-1)}{2r}^{-1}}p^d(1-p)^{n-1-d} = \exp\bigg(\frac{2\Delta_1(\Delta_1-n\Delta_2)}{p(1-p)n^3} + O(n^{-1/6})\bigg).\]
\end{itemize}
\end{lemma}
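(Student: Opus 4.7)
The plan is to reduce both parts to a careful Stirling expansion of each binomial coefficient, combined with a Taylor expansion of the binary entropy $H$ about $p$ and systematic tracking of cancellations between corresponding higher-order terms in the numerator and denominator. The main technical subtlety is that individual higher-order terms in the Taylor expansion are \emph{not} negligible in the regime $\Delta_1 = O(n^{8/5})$; only their differences are.

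For Part 1 the first move is to rewrite the left-hand side as a ratio of binomial probabilities. The algebraic identity
\[p^r(1-p)^{mn-r}\cdot[p^{r-d}(1-p)^{m(n-1)-(r-d)}]^{-1} = p^d(1-p)^{m-d}\]
expresses the given quantity as $\mb{P}[\on{Bin}(mn,p) = r]/\mb{P}[\on{Bin}(m(n-1),p) = r-d]$. Next, Stirling gives
\[\log\big(p^k(1-p)^{N-k}\binom{N}{k}\big) = -\tfrac{1}{2}\log(2\pi Np(1-p)) + \sum_{j\ge 2}\frac{H^{(j)}(p)}{j!}\frac{\delta^j}{N^{j-1}} + O(1/N)\]
with $\delta = k-Np$. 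Applying this with $(N,k,\delta) = (mn,r,\Delta_1)$ and $(m(n-1),r-d,\Delta_1-\Delta_2)$ and subtracting, the quadratic term produces $\Delta_1(\Delta_1-2n\Delta_2)/(2p(1-p)mn(n-1)) + \Delta_2^2/(2p(1-p)m(n-1))$; the first matches the claimed main term after replacing $n(n-1)$ by $n^2$ (an $O(n^{-9/5}\log^{3/2}n)$ perturbation), while the second is $O(n^{-4/5}\log n)$.

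The hard part is controlling the higher orders. Each term $H^{(j)}(p)\delta^j/(j!N^{j-1})$ individually has magnitude roughly $n^{2-2j/5}(\log n)^{(j-1)/2}$ for $\delta = O(n^{8/5})$ and $N = \Theta(n^2)$, which only becomes negligible at $j \ge 6$. However, only the difference between corresponding $j$-th terms on the two sides matters. Expanding $(\Delta_1-\Delta_2)^j = \Delta_1^j - j\Delta_1^{j-1}\Delta_2 + O(\Delta_1^{j-2}\Delta_2^2)$ and $(m(n-1))^{-(j-1)} = (mn)^{-(j-1)}(1+(j-1)/n+O(1/n^2))$, the leading contributions to the $j$-th-order difference are $-(j-1)\Delta_1^j/(n(mn)^{j-1})$ and $j\Delta_1^{j-1}\Delta_2/(mn)^{j-1}$, each of size $O(n^{-1/5})$ up to a $(p(1-p))^{1-j}$ factor. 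Summing these across $3 \le j \le 5$ and using a Lagrange remainder bound of $O(n^{-2/5}(\log n)^{5/4})$ for the tail $j \ge 6$ produces total contribution $O(n^{-1/5}(\log n)^2)$, comfortably absorbed into $O(n^{-1/6})$.

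For Part 2, the preliminary simplification is the identity $\binom{M}{k}/\binom{2M}{2k} = \sqrt{2}\,q^k(1-q)^{M-k}(1+O(1/M))$ for $q = k/M$, which follows from Stirling because both binomials share the same ratio $k/N$, so the entropy terms combine directly rather than having to cancel order-by-order. Applying this to both binomial-ratio pairs collapses the left-hand side of Part 2 to $q_2^{r-d}(1-q_2)^{M_2-(r-d)}\cdot p^d(1-p)^{n-1-d}/[q_1^r(1-q_1)^{M_1-r}]\cdot(1+O(n^{-2}))$ with $M_1 = \binom{n}{2}$, $M_2 = \binom{n-1}{2}$, $q_1 = p+\Delta_1/M_1$, $q_2 = p+(\Delta_1-\Delta_2)/M_2$. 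Proceeding as in Part 1 (taking logarithms and Taylor expanding $-H$ about $p$), the zeroth- and first-order terms cancel exactly against $d\log p + (n-1-d)\log(1-p)$ thanks to $M_1-M_2 = n-1$. The quadratic term simplifies algebraically via $M_1(\Delta_1-\Delta_2)^2 - M_2\Delta_1^2 = (n-1)\Delta_1(\Delta_1-n\Delta_2)+M_1\Delta_2^2$ to the target $2\Delta_1(\Delta_1-n\Delta_2)/(p(1-p)n^3)$ plus errors absorbed into $O(n^{-1/6})$ (using $n(n-1)(n-2) = n^3(1+O(1/n))$). Higher-order terms are handled by precisely the same cancellation analysis as in Part 1.
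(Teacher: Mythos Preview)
Your approach is correct but differs from the paper's in both parts, and the comparison is instructive.

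For Part 1, the paper avoids the order-by-order cancellation bookkeeping altogether. It factors the ratio directly as
\[\Big(\tfrac{n}{n-1}\Big)^{m(n-1)}\Big(1-\tfrac{d}{r}\Big)^{r-d}\Big(1-\tfrac{m-d}{mn-r}\Big)^{m(n-1)-(r-d)}\Big(\tfrac{pmn}{r}\Big)^d\Big(\tfrac{(1-p)mn}{mn-r}\Big)^{m-d},\]
so that in every factor $(1+x)^a$ the cubic term $|a x^3|$ is already $O(n^{-1/6})$; one then expands $\log(1+x)=x-x^2/2+O(x^3)$ and simplifies the resulting polynomial. Your route (expand each binomial pmf via the entropy Taylor series, then difference term-by-term and show the $j=3,4,5$ contributions cancel to $O(n^{-1/5+o(1)})$) is equally valid and arguably more systematic, but it requires you to check the cancellations explicitly rather than having them built into the factoring. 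One small slip: the perturbation from replacing $n(n-1)$ by $n^2$ in the main term is $O(n^{-4/5})$ times logs, not $O(n^{-9/5})$, since the main term itself can be as large as $n^{1/5}$; this does not affect anything because $n^{-4/5+o(1)}$ is still well inside $O(n^{-1/6})$.

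For Part 2, the paper's trick is slicker still: it writes the whole expression as the quotient of two instances of Part 1, with parameters $(m_1,n_1,r_1,d_1)=(2(n-1),n/2,2r,2d)$ and $(m_2,n_2,r_2,d_2)=(n-1,n/2,r,d)$, and then subtracts the two exponents from Part 1. Your reduction via $\binom{M}{k}/\binom{2M}{2k}=\sqrt{2}\,q^k(1-q)^{M-k}(1+O(1/M))$ with $q=k/M$ is a perfectly good alternative and makes the entropy structure transparent; it just costs you a second round of the same cancellation analysis rather than getting it for free from Part 1.
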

\begin{proof}
For the first expression, we have
\begin{align*}
&\frac{\binom{m(n-1)}{r-d}^{-1}}{\binom{mn}{r}^{-1}}p^d(1-p)^{m-d}\\
&= (1+O(n^{-1/2}))\frac{\Big(\frac{r-d}{m(n-1)}\Big)^{r-d}\Big(1-\frac{r-d}{m(n-1)}\Big)^{m(n-1)-(r-d)}}{\Big(\frac{r}{mn}\Big)^r\Big(1-\frac{r}{mn}\Big)^{mn-r}}p^d(1-p)^{m-d}\\
&=(1+O(n^{-1/2}))\Big(\frac{n}{n-1}\Big)^{m(n-1)}\Big(1-\frac{d}{r}\Big)^{r-d}\Big(1-\frac{m-d}{mn-r}\Big)^{m(n-1)-(r-d)}\Big(\frac{pmn}{r}\Big)^d\Big(\frac{(1-p)mn}{mn-r}\Big)^{m-d}\\
&= \exp\bigg(m - \frac{m}{2(n-1)} -d+\frac{d^2}{r}-\frac{d^2(r-d)}{2r^2} -(m-d)+\frac{(m-d)^2}{mn-r}-\frac{(m-d)^2(mn-r-m+d)}{2(mn-r)^2}\\
&\qquad\qquad+\frac{d(pmn-r)}{r} - \frac{d(r-pmn)^2}{2r^2}+\frac{(m-d)(r-pmn)}{mn-r}-\frac{(m-d)(r-pmn)^2}{2(mn-r)^2}+ O(n^{-1/6})\bigg)\\
&= \exp\bigg(- \frac{m}{2n} +\frac{(pm)^2}{pmn}-\frac{(pm)^2(pmn)}{2(pmn)^2}+\frac{(m-pm)^2}{mn-pmn}-\frac{(m-pm)^2(mn-pmn)}{2(mn-pmn)^2}\\
&\qquad\qquad+\frac{d(pmn-r)}{r} - \frac{d(r-pmn)^2}{2(pmn)^2}+\frac{(m-d)(r-pmn)}{mn-r}-\frac{(m-d)(r-pmn)^2}{2(mn-pmn)^2}+ O(n^{-1/6})\bigg)\\
&= \exp\bigg(\frac{m(r-dn)(r-pmn)}{r(mn-r)} -\frac{1}{2}\Big(\frac{d}{p^2}+\frac{m-d}{(1-p)^2}\Big)\Big(\frac{r-pmn}{mn}\Big)^2+O(n^{-1/6})\bigg)\\
&= \exp\bigg(\frac{m(r-dn)(r-pmn)}{pmn(mn-pmn)} -\frac{m}{2p(1-p)}\Big(\frac{r-pmn}{mn}\Big)^2+O(n^{-1/6})\bigg)\\
&= \exp\bigg(\frac{(r-pmn)(2(r-dn)-(r-pmn))}{2p(1-p)mn^2}+O(n^{-1/6})\bigg)
\end{align*}
We have used Stirling's formula and that $1+x = \exp(x-x^2/2 + O(x^3))$ repeatedly. Now using the definition of $\Delta_1,\Delta_2$ finishes. For the second expression, write
\begin{align*}
\frac{\binom{(n-1)(n-2)/2}{r-d}\binom{(n-1)(n-2)}{2r-2d}^{-1}}{\binom{n(n-1)/2}{r}\binom{n(n-1)}{2r}^{-1}}p^d(1-p)^{n-1-d} = \frac{\frac{\binom{m_1(n_1-1)}{r_1-d_1}^{-1}}{\binom{m_1n_1}{r_1}^{-1}}p^{d_1}(1-p)^{m_1-d_1}}{\frac{\binom{m_2(n_2-1)}{r_2-d_2}^{-1}}{\binom{m_2n_2}{r_2}^{-1}}p^{d_2}(1-p)^{m_2-d_2}}
\end{align*}
where $m_1 = 2m_2 = 2(n-1)$ and $n_1 = n_2 = n/2$, and $r_1 = 2r_2 = 2r$ and $d_1 = 2d_2 = 2d$. Now apply the first part twice.
\end{proof}

\subsection{Proof of \texorpdfstring{\cref{prop:graph-expectation,prop:bigraph-expectation}}{Propositions B.1 and B.6}}\label{sub:enum-proofs}
We first compute the graph version.
\begin{proof}[Proof of \cref{prop:graph-expectation}]
We have that our vertex is the last vertex $v_n$, corresponding to degree $d_n$. Given $S\subseteq W\setminus v_n$ of size $d_n$ (which we abusively identify with a set of integers), let $\mbf{d}_S = (d_1-\mbm{1}_{1\in S},\ldots,d_{n-1}-\mbm{1}_{n-1\in S})$. As in \cref{thm:enum-graph}, let
\begin{align*}
\ol{d} = \frac{1}{n}\sum_{i=1}^nd_i;&\qquad\qquad\qquad\ol{d}_S = \frac{1}{n-1}\sum_{i=1}^{n-1}d_{S,i} = \frac{n}{n-1}\ol{d}-\frac{2d_n}{n-1}\\
r = \frac{\ol{d}n}{2};&\qquad\qquad\qquad r_S = \frac{\ol{d}_S(n-1)}{2} = r-d_n,\\
\mu = \frac{\ol{d}}{n-1};&\qquad\qquad\qquad\mu_S = \frac{\ol{d}_S}{n-2} = \frac{n}{n-2}\mu-\frac{2d_n}{(n-1)(n-2)},\\
\gamma_2^2 = \frac{1}{(n-1)^2}\sum_{i=1}^n(d_i-\ol{d})^2;&\qquad\qquad\qquad\gamma_2^2(S) = \frac{1}{(n-2)^2}\sum_{i=1}^{n-1}(d_{S,i}-\ol{d}_S)^2.
\end{align*}

Note that $\mbf{d}$ and each $\mbf{d}_S$ clearly satisfies the conditions of \cref{thm:enum-graph} due to our given hypotheses. Note that
\[\gamma_2^2(S) = \gamma_2^2+O(n^{-1/4}), \mu_S = \mu+O(1/n)\]
due to the given hypotheses. Now define
\[A = \frac{\binom{(n-1)(n-2)/2}{r-d_n}\binom{(n-1)(n-2)}{2r-2d_n}^{-1}}{\binom{n(n-1)/2}{r}\binom{n(n-1)}{2r}^{-1}}p^{d_n}(1-p)^{n-1-d_n}\]
and recall $d_i = p(n-1)+\beta_i\sqrt{p(1-p)(n-1)}$. We have
\[r-p\binom{n}{2} = \frac{1}{2}\sum_{i=1}^n(d_i-p(n-1)) = \frac{\sqrt{p(1-p)(n-1)}}{2}\sum_{i=1}^n\beta_i.\]
By the given hypotheses and \cref{lem:binomial-approximation} we therefore derive
\[A = \exp\bigg(\frac{(\sum_{i=1}^n\beta_i)(\sum_{i=1}^n\beta_i-2n\beta_n)}{2n^2}+O(n^{-1/6})\bigg)\]

We therefore see from \cref{thm:enum-graph} that
\begin{align*}
&\mb{P}[N(v_n) = S]\\
&= (1+O(n^{-1/4}))\frac{\exp\left(\frac{1}{4}-\frac{\gamma_2^2(S)}{4\mu_S^2(1-\mu_S)^2}\right)\binom{(n-1)(n-2)/2}{r_S}\binom{(n-1)(n-2)}{2r_S}^{-1}\prod_{i=1}^{n-1}\binom{n-2}{d_i-\mbm{1}_S(i)}}{\exp\left(\frac{1}{4}-\frac{\gamma_2^2}{4\mu^2(1-\mu)^2}\right)\binom{n(n-1)/2}{r}\binom{n(n-1)}{2r}^{-1}\prod_{i=1}^n\binom{n-1}{d_i}}\\
&= (1+O(n^{-1/4}))\frac{\binom{(n-1)(n-2)/2}{r_S}\binom{(n-1)(n-2)}{2r_S}^{-1}\prod_{i=1}^{n-1}\binom{n-2}{d_i-\mbm{1}_S(i)}}{\binom{n(n-1)/2}{r}\binom{n(n-1)}{2r}^{-1}\prod_{i=1}^n\binom{n-1}{d_i}}\\
&= (1+O(n^{-1/4}))\frac{A}{\binom{n-1}{d_n}}p^{-d_n}(1-p)^{-(n-1-d_n)}\prod_{i\in S}\frac{d_i}{n-1}\prod_{i\notin S}\frac{n-1-d_i}{n-1}\\
&= (1+O(n^{-1/4}))\frac{A}{\binom{n-1}{d_n}}\prod_{i\in S}\bigg(1+\beta_i\sqrt{\frac{1-p}{p(n-1)}}\bigg)\prod_{i\notin S}\bigg(1-\beta_i\sqrt{\frac{p}{(1-p)(n-1)}}\bigg)\\
&= \frac{A}{\binom{n-1}{d_n}}\exp\bigg(-\sqrt{\frac{p}{1-p}}\sum_{i=1}^{n-1}\bigg(-\frac{1-p}{p}\bigg)^{\mbm{1}_S(i)}\frac{\beta_i}{\sqrt{n-1}}-\frac{1}{2}\sum_{i=1}^{n-1}\bigg(\frac{1-p}{p}\bigg)^{2\mbm{1}_S(i)-1}\frac{\beta_i^2}{n-1}+O(n^{-1/4})\bigg)
\end{align*}
for each $S\in\binom{[n-1]}{d_n}$. Therefore
\begin{align*}
&(1+O(n^{-1/4}))\frac{\binom{n-1}{d_n}}{A\binom{h-\mbm{1}_V(v_n)}{t}\binom{n-h-\mbm{1}_{V^c}(v_n)}{d_n-t}}\mb{P}[\deg_V(v_n) = t]\\
&= \mb{E}_{\substack{S_1\sim\binom{V\setminus v_n}{t}\\S_2\sim\binom{V^c\setminus v_n}{d_n-t}\\S=S_1\cup S_2}}\exp\bigg(-\sqrt{\frac{p}{1-p}}\sum_{i=1}^{n-1}\bigg(-\frac{1-p}{p}\bigg)^{\mbm{1}_S(i)}\frac{\beta_i}{\sqrt{n-1}}-\frac{1}{2}\sum_{i=1}^{n-1}\bigg(\frac{1-p}{p}\bigg)^{2\mbm{1}_S(i)-1}\frac{\beta_i^2}{n-1}\bigg),
\end{align*}
where we are taking the uniform distribution for the sets $S_1,S_2$ over their domains. Rearranging gives the desired result.
\end{proof}

Now we compute the bipartite version.
\begin{proof}[Proof of \cref{prop:bigraph-expectation}]
We have that our vertex is the last vertex $v_m'$, corresponding to degree $s_m$. Given $S\subseteq W$ of size $s_m$ (which we abusively identify with a set of integers), let $\mbf{t}_S = (t_1-\mbm{1}_{1\in S},\ldots,t_n-\mbm{1}_{n\in S})$. As in \cref{thm:enum-bigraph}, let
\begin{align*}
\ol{t} = \frac{1}{n}\sum_{i=1}^nt_i;&\qquad\qquad\qquad\ol{t}_S = \frac{1}{n}\sum_{i=1}^nt_{S,i} = \ol{t}-\frac{s_m}{n}\\
r = \ol{t}n;&\qquad\qquad\qquad r_S = \ol{t}_Sn = r-s_m,\\
\mu = \frac{\ol{t}}{m};&\qquad\qquad\qquad\mu_S = \frac{\ol{t}_S}{m-1} = \frac{m}{m-1}\mu-\frac{s_m}{(m-1)n},\\
\gamma_2^2(t) = \frac{1}{n^2}\sum_{i=1}^n(t_i-\ol{t})^2;&\qquad\qquad\qquad\gamma_2^2(t_S) = \frac{1}{n^2}\sum_{i=1}^n(t_{S,i}-\ol{t}_S)^2.
\end{align*}
Let $\mbf{s}'$ be $\mbf{s}$ restricted to the first $m-1$ values, and let
\[\ol{s}' = \frac{1}{m-1}\sum_{i=1}^{m-1}s_i,\qquad\gamma_2^2(s') = \frac{1}{(m-1)^2}\sum_{i=1}^{m-1}(s_i-\ol{s}')^2.\]

Note that $(\mbf{s},\mbf{t})$ and each $(\mbf{s}',\mbf{t}_S)$ clearly satisfy the conditions of \cref{thm:enum-bigraph} due to our given hypotheses. Note that
\[\gamma_2^2(t) = \gamma_2^2(t_S)+O(n^{-1/4}),\qquad\mu_S = \mu+O(n^{-3/4}),\qquad\gamma_2^2(s') = \gamma_2^2(s) + O(n^{-1/4})\]
due to the given hypotheses.

Now define
\[A = \frac{\binom{(m-1)n}{r-s_m}^{-1}}{\binom{mn}{r}^{-1}}p^{s_m}(1-p)^{n-s_m}\]
and recall $t_i = pm+\beta_i\sqrt{p(1-p)m}$ and $s_m = pn + \alpha\sqrt{p(1-p)n}$. We have
\[r-pmn = \sum_{i=1}^n(t_i-pm) = \sqrt{p(1-p)m}\sum_{i=1}^n\beta_i.\]
By the given hypotheses and \cref{lem:binomial-approximation} (with $m,n$ switched) we therefore derive
\[A = \exp\bigg(\frac{(\sum_{i=1}^n\beta_i)(\sum_{i=1}^n\beta_i-2\sqrt{mn}\alpha)}{2mn}+O(n^{-1/6})\bigg)\]

We therefore see from \cref{thm:enum-bigraph} (with $m$ and $n$ switched) that
\begin{align*}
&\mb{P}[N(v_m') = S]\\
&= (1+O(n^{-1/8}))\frac{\exp\left(-\frac{1}{2}\left(1-\frac{\gamma_2(s')^2}{\mu(1-\mu)}\right)\left(1-\frac{\gamma_2(t_S)^2}{\mu(1-\mu)}\right)\right)\binom{(m-1)n}{r}^{-1}\prod_{i=1}^{m-1}\binom{n}{s_i}\prod_{i=1}^n\binom{m-1}{t_i-\mbm{1}_S(i)}}{\exp\left(-\frac{1}{2}\left(1-\frac{\gamma_2(s)^2}{\mu(1-\mu)}\right)\left(1-\frac{\gamma_2(t)^2}{\mu(1-\mu)}\right)\right)\binom{mn}{r}^{-1}\prod_{i=1}^m\binom{n}{s_i}\prod_{i=1}^n\binom{m}{t_i}}\\
&= (1+O(n^{-1/8}))\frac{\binom{(m-1)n}{r}^{-1}\prod_{i=1}^{m-1}\binom{n}{s_i}\prod_{i=1}^n\binom{m-1}{t_i-\mbm{1}_S(i)}}{\binom{mn}{r}^{-1}\prod_{i=1}^m\binom{n}{s_i}\prod_{i=1}^n\binom{m}{t_i}}\\
&= (1+O(n^{-1/8}))\frac{A}{\binom{n}{s_m}}p^{-s_m}(1-p)^{-(n-s_m)}\prod_{i\in S}\frac{t_i}{m}\prod_{i\notin S}\frac{m-t_i}{m}\\
&= (1+O(n^{-1/8}))\frac{A}{\binom{n}{s_m}}\prod_{i\in S}\bigg(1+\beta_i\sqrt{\frac{1-p}{pm}}\bigg)\prod_{i\notin S}\bigg(1-\beta_i\sqrt{\frac{p}{(1-p)m}}\bigg)\\
&= \frac{A}{\binom{n}{s_m}}\exp\bigg(-\sqrt{\frac{p}{1-p}}\sum_{i=1}^n\bigg(-\frac{1-p}{p}\bigg)^{\mbm{1}_S(i)}\frac{\beta_i}{\sqrt{m}}-\frac{1}{2}\sum_{i=1}^n\bigg(\frac{1-p}{p}\bigg)^{2\mbm{1}_S(i)-1}\frac{\beta_i^2}{m}+O(n^{-1/8})\bigg)
\end{align*}
for each $S\in\binom{[n]}{s_m}$. Therefore
\begin{align*}
&(1+O(n^{-1/8}))\frac{\binom{n}{s_m}}{A\binom{h}{t}\binom{n-h}{s_m-t}}\mb{P}[\deg_V(v_m') = t]\\
&= \mb{E}_{\substack{S_1\sim\binom{V}{t}\\S_2\sim\binom{W\setminus V}{s_m-t}\\S=S_1\cup S_2}}\exp\bigg(-\sqrt{\frac{p}{1-p}}\sum_{i=1}^n\bigg(-\frac{1-p}{p}\bigg)^{\mbm{1}_S(i)}\frac{\beta_i}{\sqrt{m}}-\frac{1}{2}\sum_{i=1}^n\bigg(\frac{1-p}{p}\bigg)^{2\mbm{1}_S(i)-1}\frac{\beta_i^2}{m}\bigg),
\end{align*}
where we are taking the uniform distribution for the sets $S_1,S_2$ over their domains. Rearranging gives the desired result.
\end{proof}

\end{document}